\newcommand{\ba}{\begin{array}}
\newcommand{\eea}{\end{eqnarray}}
\newcommand{\ea}{\end{array}}
\newcommand{\vare}{\varepsilon}
\newcommand{\I}{\mathbb I}
\newcommand{\D}{\mathbb D}
\newcommand{\R}{\mathbb R}%
\newtheorem{definition}{Definition}[section]
\newtheorem{theorem}[definition]{Theorem}
\newtheorem{lemma}[definition]{Lemma}
\newtheorem{proposition}[definition]{Proposition}
\newtheorem{corollary}[definition]{Corollary}
\newtheorem{example}[definition]{Example}
\newtheorem{remark}[definition]{Remark}
\newtheorem{observation}[definition]{Observation}
\begin{document}
\title[Classification of contact foliations]{Homotopy classification of contact foliations\\ on open contact manifolds}
\author[M. Datta]{Mahuya Datta}
\address{Statistics and Mathematics Unit, Indian Statistical Institute\\ 203,
B.T. Road, Calcutta 700108, India.\\ e-mail:mahuya@isical.ac.in\\ }
\author[S. Mukherjee]{Sauvik Mukherjee}
\address{Statistics and Mathematics Unit, Indian Statistical Institute\\ 203,
B.T. Road, Calcutta 700108, India.\\
e-mail:mukherjeesauvik@yahoo.com}
\keywords{Contact manifolds, Contact foliations, $h$-principle}
\thanks{2010 Mathematics Subject Classification: 53C12, 53D99, 57R17, 57R30, 57R32}
\begin{abstract}We have given a homotopy classification of foliations on open contact manifolds whose leaves are contact submanifolds of the ambient space. The result is an extension of Haefliger's classification of foliations on open manifold. On the way to the main theorem we prove a result on equidimensional isocontact immersions on open contact manifolds.
\end{abstract}
\maketitle

\section{Introduction}

In \cite{haefliger}, Haefliger gave a homotopy classification of foliations on open manifolds.
A foliation on a manifold $M$ can be thought of as a partition of the manifold into injectively immersed submanifolds, called leaves. The simplest type of foliations on a manifold $M$ are obtained from submersions on it, in which case the level sets of the submersions are the leaves of foliations. More generally, if a smooth map $f:M\to N$ is transversal to a given foliation $\mathcal F_N$ on $N$ then the inverse image of $\mathcal F_N$ under $f$, denoted by $f^{-1}\mathcal F_N$, is a foliation on $M$; it is a standard fact that the codimension of $f^{-1}\mathcal F_N$ is the same as the codimension of $\mathcal F_N$. Using a result of Phillips on homotopy classification of transversal maps to foliations (\cite{phillips1}), Haefliger obtained a classification of foliations on open manifolds. Later on, this result was extended to all manifolds by Thurston.

In this article we shall study foliations on an open manifolds $M$ in the presence of contact form and extend the result of Haefliger.
Let $(M,\alpha)$ be a contact manifold with contact form $\alpha$. Then $\ker\alpha$ is a codimension 1 subbundle of the tangent bundle $TM$ and the restriction of $d\alpha$ to $\ker\alpha$ is a symplectic structure on the bundle. A foliation $\mathcal F$ on $M$ will be called a \emph{contact foliation on $M$ subordinate to} $\alpha$ (or simply a \emph{contact foliation} on $(M,\alpha)$) if the leaves of $\mathcal F$ are contact submanifolds of $M$. The tangent distribution $T\mathcal F$ of a contact foliation is transversal to the contact subbundle $\ker\alpha$; moreover, the intersection $T\mathcal F$ with $\ker\alpha$ is a symplectic subbundle of $\ker\alpha$ with respect to the symplectic structure $d'\alpha=d\alpha|_{\ker\alpha}$.

Suppose that $N$ is an arbitrary manifold with a foliation $\mathcal F_N$ of codimension $2q$ which is strictly less than $\dim M$. We shall denote the normal bundle $TM/T\mathcal F$ of $\mathcal F_n$ by $\nu\mathcal F_N$ and $\pi:TN\to\nu\mathcal F_N$ will denote the canonical projection map.
%We first study codimension $q$ contact foliations on $(M,\alpha)$ which are obtained as the inverse images of a given foliation $\mathcal F_N$ on a manifold $N$.
Let $Tr_\alpha(M,\mathcal F_N)$ be the space of all maps $f:M\rightarrow N$ which are transversal to $\mathcal F_N$ and such that $f^{-1}\mathcal F_N$, the inverse image of the foliation $\mathcal F_N$,  is a contact foliation on $M$. Let $\mathcal E_\alpha(TM,\nu\mathcal F_N)$ be the space of all vector bundle morphisms $F:TM\rightarrow TN$ such that
\begin{enumerate}
\item $\pi\circ F:TM\to \nu(\mathcal F_N)$ is an epimorphism,
\item $\ker(\pi\circ F)$ is transverse to the contact distribution $\ker\alpha$ and
\item $\ker(\pi\circ F)\cap\ker\alpha$ is a symplectic subbundle of $(\ker\alpha,d'\alpha)$.\end{enumerate}
% has the property that $[\alpha \wedge d\alpha^{(m-n)}]_{\mid \ker(F)}$ is nowhere vanishing.
With $C^{\infty}$-compact open topology on $Tr_\alpha(M,\mathcal F_N)$ and $C^{0}$-compact open topology on $\mathcal{E}_\alpha(TM,\nu\mathcal F_N)$ we obtain the following result.
\begin{theorem}\label{T:contact-transverse}
Let $(M,\alpha)$ be an open contact manifold and $(N,\mathcal F_N)$ be any foliated manifold. Suppose that the codimension of $\mathcal F_N$ is even and is strictly less than the  dimension of $M$. Then
\[\pi\circ d:Tr_\alpha(M,\mathcal F_N)\to\mathcal{E}_\alpha(TM,\nu\mathcal F_N)\]
is a weak homotopy equivalence.\end{theorem}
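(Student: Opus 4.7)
The plan is to apply Gromov's $h$-principle for $\mathrm{Diff}$-invariant, microflexible sheaves on open manifolds, in the style of Haefliger's original foliation classification. I would introduce the sheaf $\Phi$ on $M$ whose sections over $U\subset M$ are $Tr_\alpha(U,\mathcal F_N)$, together with the sheaf $\Psi$ of formal sections $\mathcal E_\alpha(TU,\nu\mathcal F_N)$; both are invariant under the pseudogroup of local contactomorphisms of $(M,\alpha)$, and $\pi\circ d$ is an equivariant sheaf map. Once $\Phi$ is shown to be microflexible and $\Phi\to\Psi$ is shown to induce a weak equivalence on sections over a small contractible open set, Gromov's open-manifold theorem for invariant sheaves will deliver the claimed global weak homotopy equivalence.

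The local $h$-principle reduces, via a distinguished foliation chart on $N$, to the case $N=\R^{n-2q}\times\R^{2q}$ with $\mathcal F_N$ the horizontal foliation. A transverse map to $\mathcal F_N$ then becomes a submersion $g:U\to\R^{2q}$ (up to a contractible choice of leaf coordinate), and the requirement that $g^{-1}\mathcal F_N$ be a contact foliation becomes the pointwise condition that $\ker dg$ be transverse to $\ker\alpha$ and that $\ker dg\cap\ker\alpha$ be a symplectic subbundle of $(\ker\alpha,d'\alpha)$. The formal data in $\mathcal E_\alpha(TU,\nu\mathcal F_N)$ classify precisely such \emph{contact submersions} at the bundle level.

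To handle this reduced problem I would invoke the equidimensional isocontact immersion theorem established earlier in the paper. The idea is to augment the target: enlarge $\R^{2q}$ to an auxiliary contact manifold $N'$ of dimension $\dim M$ whose contact distribution restricts along a chosen splitting, and then lift the formal bundle data to the formal data of an equidimensional isocontact immersion $U\to N'$ whose composition with the projection $N'\to\R^{2q}$ recovers the given formal solution. Applying the equidimensional isocontact $h$-principle then produces an honest isocontact immersion, and projecting back yields the desired holonomic submersion; the relative and parametric versions of the same construction promote this to the full local weak equivalence.

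The main obstacle is the microflexibility of $\Phi$: one must extend parametric families of contact-foliation solutions from compact subsets to slightly larger neighborhoods while preserving both transversality to $\ker\alpha$ and the symplectic nondegeneracy on $\ker dg\cap\ker\alpha$. I expect this to follow from the same lift–project procedure, reducing it to the microflexibility of equidimensional isocontact immersions on the auxiliary target $N'$. Combining microflexibility with the local $h$-principle, Gromov's theorem on open manifolds then completes the proof.
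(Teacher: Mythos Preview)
Your proposal aims at the right framework—Gromov's $h$-principle for relations invariant under a pseudogroup on an open manifold—but it diverges from the paper's argument and contains a genuine gap in the execution.

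The paper's route is much shorter. It formulates the problem as a first-order relation $\mathcal R\subset J^1(M,N)$ and proves directly (Lemma~\ref{OR}) that $\mathcal R$ is \emph{open}: the conditions ``$\pi\circ F$ is onto'' and ``$\ker(\pi\circ F)\cap\ker\alpha_x$ is a symplectic subspace'' are open conditions on linear maps. Openness makes microflexibility automatic and removes any need for an auxiliary target $N'$ or a lift--project construction. After checking invariance under local contactomorphisms (Lemma~\ref{IV}), the paper simply invokes its Theorem~\ref{CT}: any open relation on an open contact manifold invariant under local contactomorphisms satisfies the parametric $h$-principle. That theorem in turn is proved by (i) showing that contact Hamiltonian flows \emph{sharply move} any positive-codimension submanifold, so Gromov's Theorem~\ref{T:gromov-invariant} yields the local $h$-principle near a core $K$, and (ii) using Corollary~\ref{CO}—a contracting homotopy of isocontact immersions, itself a consequence of Theorem~\ref{T:equidimensional_contact immersion}—to globalize from $Op\,K$ to all of $M$.

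Your plan misallocates the role of the equidimensional isocontact result. Theorem~\ref{T:equidimensional_contact immersion} is a \emph{deformation} statement: given a family $\xi_t$ of contact structures and an initial isocontact immersion $f_0$, one can deform to isocontact $f_t$. It is not an $h$-principle producing equidimensional isocontact immersions from formal data, so it cannot serve, as you propose, to establish the local $h$-principle or microflexibility for contact submersions via lift--project. In the paper it enters only at the globalization stage, through the contracting homotopy. You also leave unaddressed why invariance under the \emph{smaller} pseudogroup of contactomorphisms suffices for Gromov's machinery; the paper closes this by verifying the sharply-moving property explicitly. Finally, what you flag as ``the main obstacle''—microflexibility—disappears once you notice that $\mathcal R$ is open, which is a direct pointwise check on the Grassmannian.
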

\noindent Theorem~\ref{T:contact-transverse} may be viewed as an extension of Phillips' Transversality Theorem (\cite{phillips1}) in the contact setting. Using this result we can obtain a homotopy classification of contact foliations on $(M,\alpha)$ following Haefilger (\cite{haefliger}). To state the result, let $\Gamma_q$ be the groupoid of germs of local diffeomorphisms of $\R^q$ and $B\Gamma_q$ be the classifying space of $\Gamma_q$ structures with the universal $\Gamma_q$-structure $\Omega_q$. The homotopy classes of $\Gamma_q$ structures on $M$ are in one-to-one correspondence with the the homotopy classes of continuous maps $M\to B\Gamma_q$ (see \cite{haefliger1}). Any $\Gamma_q$ structure on $M$ can be obtained as the pullback $f^*\Omega_q$ by a continuous map $f:M\to B\Gamma_q$. Theorem~\ref{T:contact-transverse} leads to the following classification of contact foliations on open contact manifolds.
\begin{theorem} Let $(M,\alpha)$ be an open contact manifold. The integral homotopy classes of codimension $2q$ contact foliations on $M$ subordinate to $\alpha$ are  in one-to-one correspondence with the `integrable homotopy' classes of bundle epimorphisms $(F,f):TM\to \nu\Omega_{2q}$ for which $\ker F\cap \ker\alpha$ is a symplectic subbundle of $\ker\alpha$.\label{MT1}\end{theorem}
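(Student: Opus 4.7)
The plan is to run Haefliger's argument deducing his foliation classification from Phillips' transversality theorem, with Theorem~\ref{T:contact-transverse} playing the role of Phillips' result throughout. First I would introduce the space $\mathcal{B}_\alpha(M)$ of ``contact Haefliger data'', consisting of pairs $(F,f)$ with $f\colon M\to B\Gamma_{2q}$ continuous, $F\colon TM\to f^{*}\nu\Omega_{2q}$ a vector bundle epimorphism, and $\ker F\cap \ker\alpha$ a symplectic subbundle of $(\ker\alpha,d'\alpha)$. A codimension-$2q$ contact foliation $\mathcal{F}$ on $(M,\alpha)$ yields canonically such a pair: take for $f$ the Haefliger classifying map of $\mathcal{F}$, which satisfies $f^{*}\nu\Omega_{2q}\cong \nu\mathcal{F}$, and for $F$ the projection $TM\to \nu\mathcal{F}$; the symplectic condition on $\ker F\cap \ker\alpha$ is literally the definition of contact foliation. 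This defines the forward map that is to be shown a bijection on homotopy classes.

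For surjectivity, I would appeal to the description of $B\Gamma_{2q}$ as a classifying space built out of smooth foliated manifolds, so that any continuous $f\colon M\to B\Gamma_{2q}$ can be, after a homotopy lifted to the bundle data, factored through a smooth map $g\colon M\to N$ where $N$ is a manifold carrying a codimension-$2q$ foliation $\mathcal{F}_N$ and $\widetilde F\colon TM\to \nu\mathcal{F}_N$ is a bundle epimorphism with $\ker\widetilde F\cap \ker\alpha$ symplectic. The pair $(\widetilde F,g)$ then lies in $\mathcal{E}_\alpha(TM,\nu\mathcal{F}_N)$, so Theorem~\ref{T:contact-transverse} produces, up to homotopy through data of the same kind, a map $h\in Tr_\alpha(M,\mathcal{F}_N)$ with $\pi\circ dh$ homotopic to $\widetilde F$. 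The pullback $h^{-1}\mathcal{F}_N$ is a contact foliation on $(M,\alpha)$ realising the Haefliger class of $(F,f)$. Injectivity is obtained by running the same argument parametrically over $[0,1]$: an integrable homotopy of bundle data $(F_t,f_t)$ joining the derivatives of two contact foliations is approximated by a path into a single foliated target $(N,\mathcal{F}_N)$, and the parametric form of the weak-equivalence statement in Theorem~\ref{T:contact-transverse} produces a corresponding path in $Tr_\alpha(M,\mathcal{F}_N)$ joining the two given transversal maps, hence an integrable homotopy between the two contact foliations.

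The main obstacle is the approximation step, because $B\Gamma_{2q}$ is not a manifold and Theorem~\ref{T:contact-transverse} applies only to smooth foliated targets. One has to implement Haefliger's device of replacing a Haefliger structure by a genuine foliation on a nearby manifold, \emph{while} preserving the symplectic transversality condition on $\ker F\cap \ker\alpha$. Concretely, what is required is a contact analogue of Haefliger's microfoliation/realisation lemma, together with a verification that the open condition ``$\ker F\cap \ker\alpha$ is $d'\alpha$-symplectic'' is preserved under the standard homotopies used when pushing a Haefliger structure into a smooth manifold. Once this is in place, the remainder of the proof is a formal combination of Theorem~\ref{T:contact-transverse}, its relative and parametric versions, and the universal property of $B\Gamma_{2q}$.
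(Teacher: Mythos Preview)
Your proposal is correct and follows essentially the same route as the paper: the paper's proof (stated as Theorem~\ref{haefliger_contact}) reduces the classification to Theorem~\ref{T:contact-transverse} via Haefliger's realisation result Theorem~\ref{HL}---which is precisely your ``Haefliger's device''---together with the commutative square of Lemma~\ref{L:haefliger}. Your worry about preserving the symplectic condition on $\ker F\cap\ker\alpha$ during the realisation step is unnecessary, since that condition lives entirely inside $TM$ and is unaffected when the target is replaced by a foliated manifold; Lemma~\ref{L:haefliger} makes this compatibility explicit.
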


\noindent Theorem~\ref{T:contact-transverse} will follow from a general $h$-principle type result (see Theorem~\ref{CT} stated below) by observing that $Tr_\alpha(M,\mathcal F_N)$ is the solution space of some open relation which is invariant under the action of local contactomorphisms.
\begin{theorem}
\label{CT}
Let $(M,\alpha)$ be an open contact manifold and $\mathcal{R}\subset J^{r}(M,N)$ be an open relation invariant under the action of the pseudogroup of local contactomorphisms of $(M,\alpha)$. Then the parametric $h$-principle holds for $\mathcal{R}$.
\end{theorem}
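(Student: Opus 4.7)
The strategy is to adapt Gromov's proof of the parametric $h$-principle for open $\mathrm{Diff}$-invariant relations on open manifolds to the contact category, substituting the pseudogroup of local contactomorphisms of $(M,\alpha)$ for the diffeomorphism pseudogroup throughout. Introduce the sheaf $\Phi$ on $M$ whose sections over an open set $U$ are the holonomic solutions of $\mathcal R$ in $U$, topologised by the $C^{r}$ compact-open topology; the theorem then reduces to verifying that $\Phi$ satisfies Gromov's flexibility axioms over $(M,\alpha)$.

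First I would establish \emph{microflexibility} of $\Phi$. This is a purely local property and follows immediately from openness of $\mathcal R\subset J^{r}(M,N)$: any short-time homotopy of holonomic sections defined near a compact set extends to a slightly smaller compact set via the standard cut-off and jet-smoothing argument on a chart, and openness guarantees that the extension stays in $\mathcal R$. Next I would upgrade microflexibility to \emph{sharp flexibility} by exploiting invariance of $\mathcal R$ under contactomorphisms together with the openness of $M$. The essential input here is a contact analogue of the classical spine of an open manifold: one needs a subpolyhedron $K_{0}\subset M$ of positive codimension and a family of compactly supported contact isotopies $\phi_{t}:M\to M$ such that, for every compact set $C\subset M$ and every neighbourhood $V$ of $K_{0}$, some $\phi_{t}$ sends $C$ into $V$. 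Such an isotopy can be produced by integrating a complete contact vector field on $(M,\alpha)$ associated with an exhausting Morse function on $M$ whose critical points all have index strictly less than $\dim M$.

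With microflexibility and the contact displacement property both in hand, Gromov's general $h$-principle for invariant microflexible sheaves applies directly and yields the parametric $h$-principle for $\Phi$. Concretely, a formal section is homotoped to a holonomic one by a handle-by-handle induction over a small neighbourhood of $K_{0}$: microflexibility bridges the formal-to-holonomic gap on each positive-codimension handle, while invariance of $\mathcal R$ under $\phi_{t}$ transports the resulting holonomic section outward to engulf any prescribed compact subset of $M$. The parametric version is obtained by running the same induction with parameters, using microflexibility in a family and choosing $\phi_{t}$ independently of the parameter.

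The principal obstacle will be the construction of the contact displacement isotopy $\phi_{t}$ onto $K_{0}$. On a bare smooth open manifold the analogous step is trivial --- one takes the downward flow of a Morse function without top-index critical points --- but here the flow must preserve the contact structure. One therefore has to choose the exhausting function so that the associated gradient-like vector field admits a contact modification which still retracts $M$ onto $K_{0}$. Once this contact-theoretic displacement lemma is in place, the rest of the proof transcribes Gromov's open-manifold argument essentially line by line.
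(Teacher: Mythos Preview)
Your overall strategy---adapt Gromov's open-manifold argument by replacing the full diffeomorphism pseudogroup with local contactomorphisms---is exactly the paper's strategy, and you correctly identify the construction of the contact compression onto the core $K_0$ as the crux. However, the method you propose for that step does not work, and the paper's actual construction is substantially different from what you outline.

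You propose to produce compactly supported contact isotopies $\phi_t$ by ``integrating a complete contact vector field associated with an exhausting Morse function'' and then ``choosing the exhausting function so that the associated gradient-like vector field admits a contact modification which still retracts $M$ onto $K_0$.'' There is no reason such a contact vector field exists. The gradient flow of a Morse function is not contact, and the failure of Gray stability on open manifolds means you cannot in general perturb it to a contact flow while retaining the global compression property. Moreover, a compactly supported contact isotopy is a diffeomorphism of $M$ onto itself and can never map all of $M$ into a proper open set $U$; what the argument actually needs is a map $g_1:M\to U$ pulling back sections over $Op\,K_0$ to sections over $M$.

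The paper resolves this by constructing not contact isotopies but a homotopy of equidimensional \emph{isocontact immersions} $g_t:(M,\xi)\to(M,\xi)$ with $g_0=\mathrm{id}_M$ and $g_1(M)\subset U$ (Corollary~\ref{CO}). This in turn rests on Theorem~\ref{T:equidimensional_contact immersion}, a contact analogue of Ginzburg's symplectic result, whose proof goes through graphs in $M\times\R^2$ and their characteristics and is the main technical work of the paper. Separately, the paper verifies that local contactomorphisms \emph{sharply move} any positive-codimension submanifold (via contact Hamiltonian vector fields transverse to the submanifold), which feeds into Gromov's Theorem~\ref{T:gromov-invariant} to obtain the local parametric $h$-principle near $K_0$; you have folded this step into ``upgrade microflexibility to sharp flexibility'' without indicating how it is done. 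Once both ingredients are in place, the globalisation is the explicit concatenation $g_t^*F_0$ followed by $g_1^*F_t$, with a reparametrised version for the parametric statement.
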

\noindent
A symplectic analogue of Theorem~\ref{CT} was proved in \cite{datta-rabiul} using a result of Ginzburg (\cite{ginzburg}). Ginzburg demonstrated some weaker form of stability for symplectic forms on open manifolds, though Moser's stability is known to be false on such manifolds. Here we prove a contact analogue of Ginzburg's result which can be stated as follows.
\begin{theorem}
Let $\xi_{t}$, $t\in[0,1]$ be a continuous family of contact structures defined by the contact forms $\alpha_t$ on a compact manifold $M$ with boundary. Let $(N,\tilde{\xi}=\ker\eta)$ be a contact manifold without boundary. Then every isocontact immersion $f_0:(M,\xi_0)\to (N,\tilde{\xi})$ admits a regular homotopy $\{f_t\}$ such that $f_t:(M,\xi_t)\to (N,\tilde{\xi})$ is an isocontact immersion for all $t\in[0,1]$.

In addition, if $M$ contains a compact submanifold $V_{0}$ in its interior and $\xi_{t}=\xi_{0}$ on $\it{Op}(V_{0})$ then $f_{t}$ can be chosen to be a constant homotopy on $Op\,(V_{0})$.\label{T:equidimensional_contact immersion}
\end{theorem}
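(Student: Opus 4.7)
The plan is to carry out a Gray-type stability argument on an open extension of $M$ and to read off the desired regular homotopy from the resulting flow.

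First, attach an outward collar $\partial M\times(-\delta,0]$ to $M$ via the collar neighborhood theorem, obtaining an open manifold $M^+\supset M$. Since $f_0$ is an equidimensional isocontact immersion, $f_0^*\eta=\lambda_0\,\alpha_0$ for some positive function $\lambda_0$, so $f_0$ is a local diffeomorphism and admits a smooth extension to an immersion $\tilde f_0:M^+\to N$ (e.g.\ by flowing $f_0|_{\partial M}$ along a vector field on $N$ transverse to $f_0(\partial M)$). Choose a positive smooth extension $\lambda_0^+$ of $\lambda_0$ and set $\alpha_0^+:=\tilde f_0^*\eta/\lambda_0^+$; this is a contact form on $M^+$ that extends $\alpha_0$ and makes $\tilde f_0$ isocontact by construction. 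Then extend $\alpha_t$ for $t>0$ to a continuous family $\alpha_t^+$ of contact forms on $M^+$ agreeing with the above $\alpha_0^+$ at $t=0$ and with $\alpha_t$ on $M$ (possible for $\delta$ small by openness of the contact condition), and arrange via a cut-off in the collar direction that $\alpha_t^+$ is $t$-independent outside a small neighborhood of $M$.

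Next, on $M^+$ solve the classical Gray stability equation $\phi_t^*\alpha_t^+=\mu_t\,\alpha_0^+$ for a time-dependent vector field $X_t\in\ker\alpha_t^+$ and a positive function $\mu_t$. A standard computation reduces this to the pointwise linear equation
\[
\iota_{X_t}\,d\alpha_t^+\,\big|_{\ker\alpha_t^+}\;=\;-\,\dot\alpha_t^+\,\big|_{\ker\alpha_t^+},
\]
uniquely solvable since $d\alpha_t^+|_{\ker\alpha_t^+}$ is non-degenerate; the Reeb-direction component determines the accompanying ODE for $\mu_t$. In particular $X_t\equiv 0$ on any open set where $\dot\alpha_t^+=0$. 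Compactness of $M\times[0,1]$ together with the $t$-independence of $\alpha_t^+$ outside a small collar yields a uniform bound $\|X_t\|\le C$ on $M^+\times[0,1]$ (possibly after first subdividing $[0,1]$ into short sub-intervals and iterating). Taking $\delta>C$, the reverse-time flow $\phi_t^{-1}$ started at any $p\in M$ remains in $M^+$ for all $t\in[0,1]$, so $\psi_t:=\phi_t^{-1}|_M:M\to M^+$ is a well-defined smooth family of embeddings.

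Set $f_t:=\tilde f_0\circ\psi_t:M\to N$. A direct calculation using $\phi_t^*\alpha_t^+=\mu_t\,\alpha_0^+$ yields $f_t^*\eta=\lambda_t\,\alpha_t$ for a nowhere-vanishing positive function $\lambda_t$, so each $f_t$ is an isocontact immersion and $\{f_t\}$ is a regular homotopy. For the relative case, wherever $\alpha_t\equiv\alpha_0$ one chooses the extension so that $\alpha_t^+\equiv\alpha_0^+$; then $X_t\equiv 0$ and $\psi_t\equiv\operatorname{id}$ there, so $f_t\equiv f_0$ on $\operatorname{Op}(V_0)$. The principal technical hurdle is the coherent construction of $\alpha_t^+$ and $\tilde f_0$ on the collar (forcing both the contact condition of $\alpha_t^+$ and the isocontact condition of $\tilde f_0$ with respect to $\alpha_0^+$), combined with the flow-control step needed to keep $\phi_t^{-1}$ inside $M^+$ throughout $[0,1]$; once these are in hand, the rest is a direct parameter-dependent Gray-stability calculation.
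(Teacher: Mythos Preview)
Your strategy—attach a collar to get an open $M^+$, extend $f_0$ to an immersion $\tilde f_0$, pull back $\eta$ to obtain $\alpha_0^+$, and then run Gray's Moser-type argument with a compactly supported vector field—is a natural route and genuinely different from the paper's. The paper never extends $M$. Instead it invokes Lemma~\ref{EM2} to write $\alpha_t-\alpha_0=\sum_l r_t^l\,ds_t^l$ as a finite sum of compactly supported primitive forms with every partial sum still contact, thereby reducing to the single-term case $\alpha_t=\alpha_0+r_t\,ds_t$ on a chart $U$. That case is handled by a characteristic-flow construction: embed $U$ in $(U\times\R^2,\alpha_0-y\,dx)$ via $u\mapsto(u,s_t(u),-r_t(u))$, enlarge the image to the graph $\Gamma$ of a compactly supported function transverse to the contact distribution, and apply Proposition~\ref{characteristic} to straighten $\Gamma$ isocontactly onto $U\times I$; composing with $f_0\times\mathrm{id}_{\R}$ and projecting to $N$ produces $f_t$. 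All of this lives in the interior of $M$, so the relative version on $Op(V_0)$ is automatic. Your approach buys conceptual simplicity (it is ``just Gray with a collar''), while the paper's buys a construction that never leaves $M$ and never needs a convex combination of contact forms.

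Your outline is sound, but the step you flag as the principal hurdle is in fact underspecified. The assertion that one can cut off so that $\alpha_t^+$ is $t$-independent outside a neighborhood of $M$ \emph{while remaining contact} is exactly where the difficulty sits: writing $\alpha_t^+=\alpha_0^++\chi\,\gamma_t$ with $\gamma_t$ an extension of $\alpha_t-\alpha_0$, the term $d\chi\wedge\gamma_t$ appears in $d\alpha_t^+$, and when $\gamma_t$ is not $C^0$-small this can kill $\alpha_t^+\wedge(d\alpha_t^+)^n$. Shrinking $\delta$ only makes $|d\chi|$ larger, so ``possible for $\delta$ small by openness of the contact condition'' is the wrong direction. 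Subdividing $[0,1]$ into intervals on which $\alpha_t-\alpha_{t_i}$ is $C^1$-small does cure this, but then one must re-extend $f_{t_i}$ and rebuild $\alpha_{t_i}^+$ at each endpoint and argue uniformity over the steps; that is the real content, and it is not written. Separately, once $\dot\alpha_t^+$ (hence $X_t$) has compact support in $M^+$, the flow $\phi_t$ is automatically a compactly supported diffeotopy of $M^+$, so $\phi_t^{-1}$ is globally defined; the bound $\|X_t\|\le C$ and the condition $\delta>C$ are then unnecessary, and as written they are circular since $C$ depends on the extension which depends on $\delta$.
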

As a corollary to Theorem~\ref{T:equidimensional_contact immersion} we show that every open contact manifold admits a regular homotopy of contact immersions $\varphi_t$, $t\in [0,1]$, such that $\varphi_0=id_M$ and $\varphi_1$ takes $M$ into an arbitrary neighbourhood of a core of $M$. This has a very important role to play in the proof of Theorem~\ref{CT}.

The paper is organised as follows. We recall preliminaries of contact manifolds in Section 2. Theorem~\ref{T:equidimensional_contact immersion} is proved in Section 3. In Section 4, we prove Theorem~\ref{CT} after briefly reviewing the language of $h$-principle and a few major results which are necessary for our purpose. We prove Theorem~\ref{T:contact-transverse}  and Theorem~\ref{MT1} in Sections 5 and 7 respectively. In the final section we give an example of contact foliation on some open subsets of odd-dimensional spheres. We include the relevant background of $\Gamma_q$-structures and its relations to foliations in Section 7.

\section{Preliminaries of contact manifolds}
In this section we review basic definitions and results related to contact manifolds.

\begin{definition}{\em
Let $M$ be a $2n+1$ dimensional manifold. A 1-form $\alpha$ on $M$ is said to be a \emph{contact form} if $\alpha \wedge (d\alpha)^n$ is nowhere vanishing. }\label{contact_form}
\end{definition}
If $\alpha$ is a contact form then
\[d'\alpha=d\alpha|_{\ker\alpha}\index{$d'\alpha$}\]
is a symplectic structure on the hyperplane distribution $\ker\alpha$. Also, there is a global vector field $R_\alpha$ on $M$ defined by the relations
\begin{equation}\alpha(R_\alpha)=1,\ \ \ i_{R_\alpha}.d\alpha=0,\label{reeb} \index{Reeb vector field}
\end{equation}
where $i_X$ denotes the interior multiplication by the vector field $X$. Thus, $TM$ has the following decomposition:
\begin{equation}TM=\ker\alpha \oplus \ker\,d\alpha,\label{decomposition}\end{equation}
where $\ker\alpha$ is a symplectic vector bundle and $\ker\,d\alpha$ is the 1-dimensional subbundle generated by $R_\alpha$. The vector field $R_\alpha$ is called the \emph{Reeb vector field} of the contact form $\alpha$.

A codimension 1 hyperplane distribution $\xi$ on $M$ is said to be a \emph{contact structure} on $M$ if $\xi$ is locally defined as the kernel of a (local) contact form $\alpha$. Observe that the local contact form in this case is defined uniquely up to multiplication by a nowhere vanishing function $f$. Moreover, $d(f\alpha)|_\xi=f. d\alpha|_\xi$ and hence every contact structure is associated with a conformal symplectic structure.

If $\alpha$ is a contact form then the distribution $\ker\alpha$ will be called the \emph{contact distribution of} $\alpha$.
\begin{example}\label{ex:contact}\end{example}
\begin{enumerate}\item
Every odd dimensional Euclidean space $\R^{2n+1}$ has a canonical contact form given by $\alpha=dz+\sum_{i=1}^nx_i\,dy_i$, where $(x_1,\dots,x_n,y_1,\dots,y_n,z)$ is the canonical coordinate system on $\R^{2n+1}$.
\item
Every even dimensional Euclidean space $\R^{2n}$ has a canonical 1-form $\lambda=\sum_{i=1}^n(x_idy_i-y_idx_i)$ which is called the Liouville form of $\R^{2n}$, where $(x_1,\dots,x_n$, $y_1,\dots,y_n)$ is the canonical coordinate system on $\R^{2n}$. The restriction of $\lambda$ on the unit sphere in $\R^{2n}$ defines a contact form.
%\item For any manifold $M$, the total space of the vector bundle $T^*M\times\R\to M$ has a canonical contact form.
\end{enumerate}

A contact form $\alpha$ also defines a canonical isomorphism $\phi:TM\to T^*M$ between the tangent and the cotangent bundles of $M$ given by
\begin{equation}\phi(X)=i_X d\alpha+\alpha(X)\alpha, \text{ for } X\in TM.\label{tgt_cotgt}\end{equation}
It is easy to see that the Reeb vector field $R_\alpha$ corresponds to the 1-form $\alpha$ under $\phi$.

\begin{definition} {\em Let $(N,\xi)$ be a contact manifold. A monomorphisn $F:TM\to (TN,\xi)$ is called \textit{contact} if $F$ is transversal to $\xi$ and $F^{-1}(\xi)$ is a contact structure on $M$. A smooth map $f:M\to (N,\xi)$ is called \textit{contact} if its differential $df$ is contact.

If $M$ is also a contact manifold with a contact structure $\xi_0$, then a monomorphism $F:TM\to TN$ is said to be \textit{isocontact} if $\xi_0=F^{-1}\xi$ and $F:\xi_0\to\xi$ is conformal symplectic with respect to the conformal symplectic structures on $\xi_0$ and $\xi$. A smooth map $f:M\to N$ is said to be \textit{isocontact} if $df$ is isocontact.

A diffeomorphism $f:(M,\xi)\to (N,\xi')$ is said to be a \emph{contactomorphism} \index{contactomorphism} if $f$ is isocontact.
\index{isocontact map}}
\end{definition}
If $\xi=\ker\alpha$ for a globally defined 1-form $\alpha$ on $N$, then $f$ is contact if $f^*\alpha$ is a contact form on $M$.
Furthermore, if $\xi_0=\ker\alpha_0$ then $f$ is isocontact if $f^*\alpha=\varphi \alpha_0$ for some nowhere vanishing function $\varphi:M\to\R$.

\begin{definition}{\em A vector field $X$ on a contact manifold $(M,\alpha)$ is called a \emph{contact vector field} if it satisfies the relaion $\mathcal L_X\alpha=f\alpha$ for some smooth function $f$ on $M$, where $\mathcal L_X$ denotes the Lie derivation operator with respect to $X$.}\label{D:contact_vector_field} \index{contact vector field}\end{definition}
Every smooth function $H$ on a contact manifold $(M,\alpha)$ gives a contact vector field $X_H=X_0+\bar{X}_H$ defined as follows:
\begin{equation}X_0=HR_\alpha \ \ \ \text{ and }\ \ \ \bar{X}_H\in \Gamma(\xi) \text{ such that  }i_{\bar{X}_H}d\alpha|_\xi = -dH|_\xi,\label{contact_hamiltonian} \index{contact hamiltonian}\end{equation}
where $\xi=\ker\alpha$; equivalently,
\begin{equation}\alpha(X_H)=H\ \ \text{ and }\ \ i_{X_H}d\alpha=-dH+dH(R_{\alpha})\alpha.\label{contact_hamiltonian1} \end{equation}
The vector field $X_H$ is called the \emph{contact Hamiltonian vector field} of $H$.

If $\phi_t$ is a local flow of a contact vector field $X$, then
\[\frac{d}{dt}\phi_t^*\alpha = \phi_t^*(i_X.d\alpha+d(\alpha(X)))=\phi_t^*(f\alpha)=(f\circ\phi_t)\phi_t^*\alpha.\]
Therefore, $\phi_t^*\alpha=\lambda_t\alpha$, where $\lambda_t=e^{\int f\circ\phi_t \,dt}$. Thus the flow of a contact vector field preserves the contact structure.

%\begin{theorem}Every contact form $\alpha$ on a manifold $M$ of dimension $2n+1$ can be locally represented as $dz-\sum_{i=1}^np_i\,dq_i$, where $(z,q_1,\dots,q_n,p_1,\dots,p_n)$ is %a local coordinate system on $M$.\end{theorem}
\begin{theorem}(Gray's Stability Theorem (\cite{gray}))
 \label{T:gray stability}
If $\xi_t,\ t\in \I$ is a smooth family of contact structures on a closed manifold $M$, then there exists an isotopy $\psi_t,\ t\in \I$, of $M$ such that \[\psi_t:(M,\xi_0)\to (M,\xi_t) \]
is isocontact for all $t\in \I$
\end{theorem}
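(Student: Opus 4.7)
The plan is to apply Moser's trick. First I would choose a smooth family of contact $1$-forms $\alpha_t$ on $M$ with $\ker\alpha_t=\xi_t$; on a closed manifold such a family exists because the space of $1$-forms defining a given contact structure is a convex cone, so local choices patch by a partition of unity. I then seek a time-dependent vector field $X_t$ whose flow $\psi_t$, with $\psi_0=\mathrm{id}$, satisfies $\psi_t^*\alpha_t=\lambda_t\alpha_0$ for some smooth family of positive functions $\lambda_t$. Since $\ker(\lambda_t\alpha_0)=\xi_0$, this identity immediately forces $\psi_t:(M,\xi_0)\to(M,\xi_t)$ to be isocontact.

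Differentiating the target identity in $t$ and pulling back by $\psi_t^{-1}$ reduces the problem to the infinitesimal equation
\begin{equation*}
\dot\alpha_t + \mathcal L_{X_t}\alpha_t = \mu_t\alpha_t,
\end{equation*}
where $\mu_t$ is an auxiliary function encoding the logarithmic derivative of $\lambda_t$. Following the standard ansatz, I would require $X_t\in\xi_t$; then $\alpha_t(X_t)\equiv 0$, and Cartan's formula collapses the equation to $\dot\alpha_t + i_{X_t}d\alpha_t=\mu_t\alpha_t$. Evaluating on the Reeb field $R_{\alpha_t}$ and using $i_{R_{\alpha_t}}d\alpha_t=0$ forces $\mu_t:=\dot\alpha_t(R_{\alpha_t})$, and restricting the equation to $\xi_t$ (where $\alpha_t$ vanishes) yields the pointwise linear system
\begin{equation*}
i_{X_t}\bigl(d\alpha_t|_{\xi_t}\bigr)=-\dot\alpha_t|_{\xi_t}.
\end{equation*}

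The crux is that $d\alpha_t|_{\xi_t}=d'\alpha_t$ is symplectic on $\xi_t$, hence nondegenerate, so there is a unique smooth $X_t\in\Gamma(\xi_t)$ solving the above system, depending smoothly on $t$. The compactness of $M$ (closed, hence compact) then allows $X_t$ to integrate to a global isotopy $\psi_t$ on all of $\mathbb I$; defining $\lambda_t$ by $\lambda_t(x):=\exp\int_0^t\mu_s(\psi_s(x))\,ds$, one verifies directly from the construction that $\psi_t^*\alpha_t=\lambda_t\alpha_0$. The only real obstacle is the use of compactness to produce a global flow; on non-compact manifolds Moser stability genuinely fails, which is precisely why weaker contact analogues such as Theorem~\ref{T:equidimensional_contact immersion} are needed elsewhere in the paper.
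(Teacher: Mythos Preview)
Your Moser-trick argument is correct and is the standard modern proof of Gray's stability. The paper does not give its own proof of this statement---it merely quotes the theorem and cites Gray's original paper---so there is no in-paper argument to compare against.
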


\begin{remark}{\em
Gray's stability theorem is not valid on non-closed manifolds. We shall see an extension of Theorem~\ref{T:gray stability} for such manifolds in Theorem~\ref{T:equidimensional_contact immersion}) which is one of the main results of this article.}
\end{remark}
We end this section with the definition of a contact submanifold.
\begin{definition} {\em A submanifold $N$ of a contact manifold $(M,\xi)$ is said to be a \emph{contact submanifold} if the inclusion map $i:N\to M$ is a contact map.}\label{contact_submanifold} \index{contact submanifold}\end{definition}

\begin{lemma} A submanifold $N$ of a contact manifold $(M,\xi=\ker\alpha)$ is a contact submanifold if and only if $TN$ is transversal to $\xi|_N$ and $TN\cap\xi|_N$ is a symplectic subbundle of $(\xi,d'\alpha)$.\label{L:contact_submanifold}\end{lemma}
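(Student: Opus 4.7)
The plan is to unpack the definition of a contact map applied to the inclusion $i: N \hookrightarrow M$ and reduce the two conditions to a straightforward computation with pullbacks.

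By definition, $N$ is a contact submanifold precisely when $di: TN \to TM$ is a contact monomorphism, i.e.\ (a) $di$ is transversal to $\xi$, which is manifestly the same as $TN \pitchfork \xi|_N$, and (b) $(di)^{-1}\xi = TN \cap \xi|_N$ is a contact structure on $N$. So the whole content of the lemma is to show that, under the standing transversality assumption, condition (b) is equivalent to $TN \cap \xi|_N$ being a symplectic subbundle of $(\xi, d'\alpha)$.

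First I would set $\beta := i^*\alpha$, a 1-form on $N$. Because $\xi$ has codimension one in $TM$, the transversality $TN \pitchfork \xi|_N$ is equivalent to saying $TN \not\subset \xi|_N$ pointwise, which is precisely the condition that $\beta$ is nowhere vanishing. Moreover, $\ker\beta_p = \{v \in T_pN : \alpha(v) = 0\} = T_pN \cap \xi_p$, so $\ker \beta = TN \cap \xi|_N$ as subbundles of $TN$. In particular, $TN \cap \xi|_N$ has corank one in $TN$, so $\dim N$ has the same parity as $\mathrm{rk}(TN\cap \xi|_N)+1$.

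Next I would invoke the well-known characterization of contact forms via nondegeneracy: a nowhere-vanishing 1-form $\beta$ on $N$ is a contact form if and only if $d\beta|_{\ker \beta}$ is a (fiberwise) nondegenerate 2-form. Since $d\beta = i^*(d\alpha)$, for $v, w \in T_pN \cap \xi_p$ we have $d\beta(v,w) = d\alpha(v,w) = d'\alpha(v,w)$. Thus $d\beta|_{\ker\beta}$ coincides with the restriction of $d'\alpha$ to $TN \cap \xi|_N$.

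Combining these observations closes the proof: $TN \cap \xi|_N$ carries a contact structure (namely $\ker\beta$) exactly when $d'\alpha|_{TN\cap\xi|_N}$ is fiberwise nondegenerate, which is the definition of $TN\cap\xi|_N$ being a symplectic subbundle of $(\xi,d'\alpha)$. The one subtlety to be careful of is that $\xi$ need only be \emph{locally} of the form $\ker\alpha$ in general; but since both conditions in the lemma are local and the local form $\alpha$ is unique up to multiplication by a nowhere vanishing function, and $d'(f\alpha) = f\,d'\alpha$ on $\xi$, the equivalence is insensitive to this ambiguity. This is the only mild obstacle, and it is handled by working in a trivializing neighborhood.
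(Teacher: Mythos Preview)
Your argument is correct and is the standard unwinding of the definitions; the paper in fact states this lemma without proof, so there is nothing to compare against. One small slip of phrasing: in your last paragraph you write ``$TN\cap\xi|_N$ carries a contact structure (namely $\ker\beta$)'', but $TN\cap\xi|_N$ \emph{is} $\ker\beta$; what you mean is that $\ker\beta$ is a contact structure on $N$, i.e.\ that $\beta$ is a contact form. With that corrected, the chain of equivalences is exactly right: transversality $\Leftrightarrow$ $\beta=i^*\alpha$ nowhere vanishing, and then $\beta$ contact $\Leftrightarrow$ $d\beta|_{\ker\beta}=d'\alpha|_{TN\cap\xi|_N}$ nondegenerate $\Leftrightarrow$ $TN\cap\xi|_N$ symplectic in $(\xi,d'\alpha)$.
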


\section{Equidimensional contact immersions}

We begin with a simple observation.
\begin{observation}{\em
Let $(M,\alpha)$ be a contact manifold. The product manifold $M\times\R^2$ has a canonical contact form given by $\tilde{\alpha}=\alpha- y\,dx$, where $(x,y)$ are the coordinate functions on $\R^2$. We shall denote the contact structure associated with $\tilde{\alpha}$ by $\tilde{\xi}$.
Now suppose that $H:M\times\R\to \R$ is a smooth function which vanishes on some open set $U$. Define $\bar{H}:M\times\R\to M\times\R^2$ by $\bar{H}(u,t)=(u,t,H(u,t))$ for all $(u,t)\in M\times\R$. Since $\bar{H}(u,t)=(u,t,0)$ for all $(u,t)\in U$, the image of $d\bar{H}_{(u,t)}$ is $T_uM\times\R\times\{0\}$. On the other hand,  $\tilde{\xi}_{(u,t,0)}=\xi_u\times\R^2$. Therefore, $\bar{H}$ is transversal to $\tilde{\xi}$ on $U$.
}\end{observation}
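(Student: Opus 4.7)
The plan is to verify the two claims packaged into the Observation: first, that $\tilde\alpha = \alpha - y\,dx$ is a contact form on $M \times \R^2$, and second, that the map $\bar H$ is transversal to $\tilde\xi$ at every point of $U$.

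For the first claim, with $\dim M = 2n+1$ the product $M \times \R^2$ has dimension $2(n+1)+1$, so I would compute $\tilde\alpha \wedge (d\tilde\alpha)^{n+1}$ and show it is nowhere vanishing. Writing $d\tilde\alpha = d\alpha - dy \wedge dx$ and expanding binomially, every term containing two or more copies of $dy \wedge dx$ vanishes, so the only surviving contribution is $-(n+1)(d\alpha)^n \wedge dy \wedge dx$. Wedging with $\tilde\alpha$, the correction $-y\,dx$ is killed because it produces a repeated factor of $dx$, and one is left with a nonzero multiple of $\alpha \wedge (d\alpha)^n \wedge dx \wedge dy$. Since $\alpha$ is a contact form on $M$, this is a volume form on $M \times \R^2$, giving the contact condition.

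For the second claim, the key point is that $H \equiv 0$ on the open set $U$, so both $H$ and the differential $dH$ vanish identically on $U$. Hence at each $(u,t) \in U$ one has $\bar H(u,t) = (u,t,0)$ and $d\bar H_{(u,t)}(v,s) = (v,s,0)$, so the image of $d\bar H_{(u,t)}$ equals $T_uM \times \R \times \{0\}$. At the point $(u,t,0)$ the form $\tilde\alpha$ reduces to $\alpha_u$, because the $y\,dx$ term vanishes at $y=0$, and so the contact hyperplane is $\tilde\xi_{(u,t,0)} = \xi_u \times \R^2$. The sum of these two subspaces contains $T_uM \times \R \times \{0\}$ together with $\{0\} \times \{0\} \times \R$ (taking $0 \in \xi_u$ and varying the last $\R^2$-factor), and therefore fills up all of $T_{(u,t,0)}(M \times \R^2)$, establishing transversality on $U$.

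No serious obstacle is expected; both parts are direct local computations exploiting the product structure. The one point worth underlining is that the argument relies on $dH$ (not merely $H$) vanishing on $U$, which is automatic from $H \equiv 0$ on an \emph{open} set. Without this, the image of $d\bar H$ would generally have a component in the $\partial/\partial y$ direction and transversality at points where $y \neq 0$ would require a finer comparison against the form of $\tilde\xi$ off the slice $\{y=0\}$.
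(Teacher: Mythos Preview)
Your proposal is correct and follows essentially the same approach as the paper. The Observation in the paper is self-contained and gives only the bare outline (identifying the image of $d\bar H$ and the hyperplane $\tilde\xi$ at points with $y=0$); you have simply filled in the details the paper leaves implicit, in particular the verification that $\tilde\alpha$ is contact and the remark that one needs $dH$ (not just $H$) to vanish on $U$, which is guaranteed because $U$ is open.
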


\begin{proposition} Let $M$ be a contact manifold with contact form $\alpha$. Suppose that $H$ is a smooth real-valued function on $M\times(-\vare,\vare)$ with compact support
such that its graph $\Gamma$ in $M\times\R^2$ is transversal to the kernel of $\tilde{\alpha}=\alpha-y\,dx$. Then there is a diffeomorphism $\Psi:M\times(-\vare,\vare)\to \Gamma$ which pulls back $\tilde{\alpha}|_{\Gamma}$ onto $h(\alpha\oplus 0)$, where $h$ is a nowhere-vanishing smooth real-valued function on $M\times\R$.\label{characteristic}
\end{proposition}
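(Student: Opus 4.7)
The plan is to straighten $\beta:=\bar{H}^{*}\tilde{\alpha}$ by flowing along the characteristic direction of $\beta$ on $M\times(-\vare,\vare)$. Parameterise $\Gamma$ by the canonical diffeomorphism $\bar{H}:M\times(-\vare,\vare)\to\Gamma$, $\bar{H}(u,t)=(u,t,H(u,t))$; then
\[\beta=\bar{H}^{*}\tilde{\alpha}=\alpha-H\,dt,\]
which is nowhere zero on $M\times(-\vare,\vare)$ (the transversality of $\Gamma$ with $\ker\tilde{\alpha}$ is equivalent to this, and is in fact automatic since $\alpha$ and $H\,dt$ lie in independent cotangent summands). It therefore suffices to produce a diffeomorphism $\Phi$ of $M\times(-\vare,\vare)$ that restricts to the identity on $M\times\{0\}$ and satisfies $\Phi^{*}\beta=h\cdot\alpha$ for some nowhere-vanishing $h$; then $\Psi:=\bar{H}\circ\Phi$ is the required map.

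To exhibit the characteristic line field of $\beta$, let $X_{H_t}$ denote the contact Hamiltonian vector field on $(M,\alpha)$ of $H_t:=H(\cdot,t)$ as defined by (\ref{contact_hamiltonian1}), and set
\[Y:=X_{H}+\partial_{t}\]
on $M\times(-\vare,\vare)$. Using $\alpha(X_{H_t})=H_t$, the identity $i_{X_{H_t}}d\alpha=-dH_t+(R_{\alpha}H_t)\alpha$, and the observation $\bar{X}_{H_t}H_t=0$ (immediate from $d\alpha(\bar{X}_{H_t},\bar{X}_{H_t})=0$), a direct computation gives
\[\beta(Y)=0\qquad\text{and}\qquad i_{Y}\,d\beta=(R_{\alpha}H)\,\beta,\]
so by Cartan's formula $\mathcal{L}_{Y}\beta=(R_{\alpha}H)\,\beta$. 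Hence $Y$ spans the characteristic line field of $\beta$ and its flow rescales $\beta$ pointwise. This verification is the one step of the argument that requires an actual calculation; the rest is formal.

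Because $H$ has compact support, so does the time-dependent field $X_{H_t}$ on $M$, and the $t$-component of $Y$ equals $1$; thus the flow $\phi_s$ of $Y$ has the form $\phi_{s}(u,0)=(\varphi_{s}(u),s)$ for $s\in(-\vare,\vare)$, where $\varphi_s:M\to M$ is the isotopy generated by the time-dependent field $X_{H_t}$ with $\varphi_0=\mathrm{id}$. Define
\[\Phi(u,t):=\phi_{t}(u,0)=(\varphi_{t}(u),\,t).\]
Then $\Phi$ is a diffeomorphism of $M\times(-\vare,\vare)$ fixing the slice $t=0$, and since $d\Phi(\partial_{t})=Y\in\ker\beta$, the pullback $\Phi^{*}\beta$ has no $dt$-component. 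On each slice $M\times\{t\}$ it restricts to $\varphi_{t}^{*}\alpha$. Each $\varphi_t$ is a contactomorphism (its generator being a contact vector field), so $\varphi_{t}^{*}\alpha=h_t\,\alpha$, where integrating $\tfrac{d}{dt}(\varphi_{t}^{*}\alpha)=\varphi_{t}^{*}(\mathcal{L}_{X_{H_t}}\alpha)=((R_{\alpha}H_t)\circ\varphi_{t})\,\varphi_{t}^{*}\alpha$ yields $h_t=\exp\!\bigl(\int_{0}^{t}(R_{\alpha}H_s)\circ\varphi_{s}\,ds\bigr)>0$. Hence $\Phi^{*}\beta=h\cdot\alpha$ with $h$ nowhere vanishing, and $\Psi:=\bar{H}\circ\Phi$ is the required diffeomorphism.
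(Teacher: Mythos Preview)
Your proof is correct and follows essentially the same approach as the paper's: both parametrise $\Gamma$ by $\bar{H}$, reduce to straightening $\beta=\alpha-H\,dt$, and do so via the flow of the vector field $X_{H_t}+\partial_t$ starting from the slice $t=0$, with the conformal factor computed by integrating $R_\alpha H$ along flow lines. The only difference is organizational: you compute $\mathcal{L}_Y\beta=(R_\alpha H)\beta$ directly, whereas the paper verifies separately that $F^*\beta$ kills $\partial_t$ and then restricts to each slice; your observation that transversality of $\Gamma$ to $\ker\tilde{\alpha}$ is automatic (since $\alpha$ never vanishes on the $TM$ summand) is also correct.
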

\begin{proof} Since the graph $\Gamma$ of $H$ is transversal to $\tilde{\xi}$, the restriction of $\tilde{\alpha}$ to $\Gamma$ is a nowhere vanishing 1-form on it.
Define a function $\bar{H}:M\times(-\vare,\vare)\to M\times\R^2$ by $\bar{H}(u,t)=(u,t,H(u,t))$. The map $\bar{H}$ defines a diffeomorphism of $M\times(-\vare,\vare)$ onto $\Gamma$, which pulls back the form $\tilde{\alpha}|_{\Gamma}$ onto $\alpha-H\,dt$. It is therefore enough to obtain a diffeomorphism $F:M\times(-\vare,\vare)\to M\times(-\vare,\vare)$ which pulls back the 1-form $\alpha-H\,dt$ onto a multiple of $\alpha\oplus 0$.
%Observe that $\Gamma$ is globally defined as the zero set of the function $\Phi:M\times\R^2\to\R$ given by $\Phi(u,x,y)=H(u,x)-y$.
%Hence the relation $i_{\tilde{X}} d\tilde{\alpha}=d\Phi$ on $\ker\tilde{\alpha}$ uniquely defines the characteristic vector field on $\Gamma$.
For each $t$, define a smooth function $H^t$ on $M$ by $H^t(u)=H(u,t)$ for all $u\in M$. Let
$X_{H^t}$ denote the contact Hamiltonian vector field on $M$ associated with $H^t$. Consider the vector field
$\bar{X}$ on $M\times\R$ as follows: \[\bar{X}(u,t)=(X_{H^t}(u),1),\ \ (u,t)\in M\times(-\vare,\vare).\]
Let $\{\bar{\phi}_s\}$ denote a local flow of $\bar{X}$ on $M\times\R$. Then writing $\bar{\phi}_s(u,t)$ as
\begin{center}$\bar{\phi}_s(u,t)=(\phi_s(u,t),s+t)$ for all $u\in M$ and $s,t\in \R$,\end{center}
we get the following relation:
\[\frac{d\phi_s}{ds}(u,t)=X_{t+s}(\phi_s(u,t)),  \]
where $X_t$ stands for the vector field $X_{H^t}$ for all $t$. In particular, we have
\begin{equation}\frac{d\phi_t}{dt}(u,0)=X_t(\phi_t(u,0)),\label{flow_eqn}\end{equation}
Define a level preserving map $F:M\times(-\vare,\vare)\to M\times(-\vare,\vare)$ by
\[F(u,t)=\bar{\phi}_t(u,0)=(\phi_t(u,0),t).\]
Since the support of $H$ is contained in $K\times (-\vare,\vare)$ for some compact set $K$, the flow $\bar{\phi}_s$ starting at $(u,0)$ remains within $M\times (-\vare,\vare)$ for $s\in (-\vare,\vare)$.
Note that
\begin{center}$dF(\frac{\partial}{\partial t})=\frac{\partial}{\partial t}\bar{\phi}_t(u,0)=\bar{X}(\bar{\phi}_t(u,0))=\bar{X}(\phi_t(u,0),t)=(X_{H^t}(\phi_t(u,0)),1).$\end{center}
This implies that
\begin{center}$\begin{array}{rcl}F^*(\alpha\oplus 0) (\frac{\partial}{\partial t}|_{(u,t)}) &  = & (\alpha\oplus 0)(dF(\frac{\partial}{\partial t}|_{(u,t)}))\\
%& = & (\alpha\oplus 0)(\bar{X}(\bar{\phi}_t(u,0))\\
%& = & \alpha(\bar{X}(\phi_t(u,0),t))
& = & \alpha(X_{H^t}(\phi_t(u,0)))\\
& = &H^t(\phi_t(u,0))\ \ \ \ \ \text{by equation }(\ref{contact_hamiltonian1}) \\
& = & H(\bar{\phi}_t(u,0))=H(F(u,t)) \end{array}$\end{center}
Also,
\begin{center}$F^*(H\,dt)(\frac{\partial}{\partial t})=(H\circ F)\,dt(dF(\frac{\partial}{\partial t}))=H\circ F$\end{center}
Hence,
\begin{equation}F^*(\alpha-Hdt)(\frac{\partial}{\partial t})=0.\label{eq:F1}\end{equation}
On the other hand,
\begin{equation}F^*(\alpha-H\,dt)|_{M\times\{t\}}=F^*\alpha|_{M\times\{t\}}=\psi_t^*\alpha,\label{eq:F2}\end{equation}
where $\psi_t(u)=\phi_t(u,0)$, $\psi_0(u)=u$. Thus, $\{\psi_t\}$ are the integral curves of the time dependent vector field $\{X_t\}$ on $M$ (see (\ref{flow_eqn})), and we get
\[\begin{array}{rcl}\frac{d}{dt}\psi_t^*\alpha & = & \psi^*_t(i_{X_t}d\alpha+d(i_{X_t}\alpha))\\
& = & \psi^*_t(dH^t(R_\alpha)\alpha-dH^t+dH^t) \ \ \text{by equation }(\ref{contact_hamiltonian1})\\
& = & \psi^*_t(dH^t(R_\alpha)\alpha)\\
& = & \theta(t)\psi_t^*\alpha,\end{array}\]
where $\theta(t)=\psi_t^*(dH^t(R_\alpha))$. Hence $\psi_t^*\alpha=e^{\int_0^t\theta(s)ds}\psi_0^*\alpha=e^{\int_0^t\theta(s)ds}\alpha$. We conclude from equation (\ref{eq:F1}) and (\ref{eq:F2}) that $F^*(\alpha-H\,dt)=e^{\int_0^t\theta(s)ds}\alpha$. Finally, take $\Psi=\bar{H}\circ F$ which has the desired properties.
\end{proof}
\begin{remark}{\em
If there exists an open subset $\tilde{U}$ of $M$ such that $H$ vanishes on $\tilde{U}\times (-\vare,\vare)$ then the contact Hamiltonian vector fields $X_t$ defined above are identically zero on $\tilde{U}$ for all $t\in(-\vare,\vare)$. Since $\psi_t=\phi_t(\ \ ,0)$ are the integral curves of the time dependent vector fields $X_t=X_{H^t}$, $0\leq t\leq 1$, we must have $\psi_t(u)=u$ for all $u\in \tilde{U}$. Therefore, $F(u,t)=(u,t)$ and hence $\Psi(u,t)=(u,t,0)$ for all $u\in\tilde{U}$ and all $t\in(-\vare,\vare)$.}\label{R:characteristic}\end{remark}

\begin{remark}{\em
%We begin with the notion of characteristics on an hypersurface of a contact manifold.
If $\Gamma$ is a codimension 1 submanifold of a contact manifold $(N,\tilde{\alpha})$ such that the tangent planes of $\Gamma$ are transversal to $\tilde{\xi}=\ker\tilde{\alpha}$ then there is a codimension 1 distribution $D$ on $\Gamma$ given by the intersection of $\ker \tilde{\alpha}|_\Gamma$ and $T\Gamma$. Since $D=\ker\tilde{\alpha}|_\Gamma\cap T\Gamma$ is an odd dimensional distribution, $d\tilde{\alpha}|_D$ has a 1-dimensional kernel. If $\Gamma$ is locally defined by a function $\Phi$ then $d\Phi_x$ does not vanish identically on $\ker\tilde{\alpha}_x$, for $\ker d\Phi_x$ is transversal to $\ker\tilde{\alpha}_x$. Thus there is a unique non-zero vector $Y_x$ in $\ker\tilde{\alpha}_x$ satisfying the relation $i_{Y_x}d\alpha_x=d\Phi_x$. Clearly, $Y_x$ is tangent to $\Gamma$ at $x$ and it is defined uniquely only up to multiplication by a non-zero real number (as $\Phi$ is not unique). However, the 1-dimensional distribution on $\Gamma$ defined by $Y$ is uniquely defined by the contact form $\tilde{\alpha}$. The integral curves of $Y$ are called \emph{characteristics} of $\Gamma$ (\cite{arnold}).

It can be verified that the diffeomorphism $\Psi$ in the proof of the above proposition maps the lines in $M\times\R$ onto the characteristics on $\Gamma$.}
\end{remark}
The following lemma will reduce Theorem~\ref{T:equidimensional_contact immersion} to the special case in which the contact forms $\alpha_t$ are piecewise primitive. A non-parametric form of this lemma was proved in \cite{eliashberg}.
\begin{lemma}
\label{EM2} Let $\alpha_{t}, t\in[0,1]$, be a continuous family of contact forms on a compact manifold $M$, possibly with non-empty boundary. Then for each $t\in [0,1]$, there exists a sequence of primitive 1-forms $\beta_t^l=r_t^l\,ds_t^l, l=1,..,N$ such that
\begin{enumerate}
\item $\alpha_t=\alpha_0+\sum_1^N \beta_t^l$ for all $t\in [0,1]$,
\item for each $j=0,..,N$ the form $\alpha^{(j)}_t=\alpha_{0}+\sum_{1}^{j}\beta_t^{l}$ is contact,
\item for each $j=1,..,N$ the functions $r_t^j$ and $s_t^j$ are compactly supported within a coordinate neighbourhood.
\end{enumerate}
Furthermore, the forms $\beta_t^l$ depends continuously on $t$.

If $\alpha_t=\alpha_0$ on $Op\,V_0$, where $V_0$ is a compact subset contained in the interior of $M$, then the functions $r^l_t$ and $s^l_t$ can be chosen to be equal to zero on an open neighbourhood of $V_0$.
\end{lemma}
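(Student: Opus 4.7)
The plan is to combine a fine partition of the parameter interval $[0,1]$ with a partition of unity on $M$ subordinate to coordinate charts. Since the contact condition $\alpha\wedge (d\alpha)^{n}\neq 0$ is $C^{1}$-open and the family $\{\alpha_{t}\}_{t\in[0,1]}$ is compact in $C^{\infty}$, there is $\delta>0$ such that any $1$-form at $C^{1}$-distance less than $\delta$ from some $\alpha_{t^{*}}$ is automatically contact. The whole construction is arranged so that each partial sum $\alpha^{(j)}_{t}$ will differ in $C^{1}$ from some $\alpha_{t^{*}}$ by less than $\delta$.

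First I would fix a finite partition of unity $\{\varphi_{i}\}_{i=1}^{p}$ subordinate to coordinate charts $U_{i}$ with coordinates $x^{i}_{1},\dots,x^{i}_{d}$. For the relative version, choose the $\varphi_{i}$'s so that they vanish on an open neighbourhood $W'$ of $V_{0}$ contained in the region $W\subset Op\,V_{0}$ where $\alpha_{t}\equiv\alpha_{0}$; this is admissible because $\alpha_{t}-\alpha_{0}$ already vanishes on $W$, so the partition need not sum to $1$ there. Pick bump functions $\chi_{i}$ with $\chi_{i}\equiv 1$ on $\mathrm{supp}(\varphi_{i})$, $\mathrm{supp}(\chi_{i})\subset U_{i}$, and $\chi_{i}\equiv 0$ near $V_{0}$, and set $s^{c}_{i}=\chi_{i}x^{i}_{c}$, a globally defined function supported in $U_{i}$ and vanishing near $V_{0}$. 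Finally, using uniform continuity of $t\mapsto\alpha_{t}$ in $C^{1}$ together with compactness of $[0,1]$, subdivide $0=s_{0}<s_{1}<\cdots<s_{m}=1$ so that $\|\alpha_{t'}-\alpha_{t''}\|_{C^{1}}<\delta/C$ whenever $t',t''\in[s_{k-1},s_{k}]$, where $C$ is a constant depending only on the fixed data $\{\varphi_{i},\chi_{i}\}$, extracted in the contact check below.

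Next I would define $N=mpd$ primitive forms indexed by triples $(k,i,c)$, ordered lexicographically. In chart $U_{i}$, write $\varphi_{i}(\alpha_{\min(t,s_{k})}-\alpha_{\min(t,s_{k-1})})=\sum_{c}r^{k,c}_{t,i}\,dx^{i}_{c}$ and put $\beta^{k,i,c}_{t}=r^{k,c}_{t,i}\,ds^{c}_{i}$, which is globally well defined since $r^{k,c}_{t,i}$ is supported in $\mathrm{supp}(\varphi_{i})$ where $ds^{c}_{i}=dx^{i}_{c}$. Summing and using $\sum_{i}\varphi_{i}\equiv 1$ on the set where the differences are nonzero, one gets the telescoping identity
\[
\alpha_{t}-\alpha_{0}=\sum_{k=1}^{m}\bigl(\alpha_{\min(t,s_{k})}-\alpha_{\min(t,s_{k-1})}\bigr)=\sum_{k,i,c}\beta^{k,i,c}_{t},
\]
which gives condition (1). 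Continuity of $\beta^{k,i,c}_{t}$ in $t$ follows from continuity of $\alpha_{t'}$ and of $\min(t,s_{k})$ in $t$; condition (3) holds by choice of $\chi_{i}$; and the relative condition is immediate since $\alpha_{t}-\alpha_{0}=0$ on $W$ forces $r^{k,c}_{t,i}\equiv 0$ there, while $\chi_{i}\equiv 0$ near $V_{0}$ forces $s^{c}_{i}\equiv 0$ there.

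The main obstacle is condition (2): verifying that every intermediate partial sum $\alpha^{(j)}_{t}$ is contact. After all blocks with first index $k'<K$ have been added, the partial sum telescopes exactly to $\alpha_{\min(t,s_{K-1})}$, which is contact by hypothesis. While we are still inside the $K$-th block, the partial sum deviates from $\alpha_{\min(t,s_{K-1})}$ by a fragment of $\alpha_{\min(t,s_{K})}-\alpha_{\min(t,s_{K-1})}$ cut out by the $\varphi_{i}$'s and by the coordinate projections; its $C^{1}$-norm is bounded by $C\,\|\alpha_{\min(t,s_{K})}-\alpha_{\min(t,s_{K-1})}\|_{C^{1}}$ for a constant $C$ depending only on the $C^{1}$-norms of the fixed functions $\varphi_{i}$ and $\chi_{i}$. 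The delicate part is extracting this uniform $C$ independently of the index within the block; once this is done, choosing the time subdivision so that $\|\alpha_{t'}-\alpha_{t''}\|_{C^{1}}<\delta/C$ on each $[s_{k-1},s_{k}]$ keeps every partial sum in the $\delta$-contact neighbourhood of $\alpha_{\min(t,s_{K-1})}$, completing the proof.
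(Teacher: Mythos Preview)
Your argument is correct and follows essentially the same strategy as the paper: a fine subdivision of the parameter interval, a partition of unity subordinate to coordinate charts, and the coordinate-by-coordinate decomposition $\varphi_i(\alpha_{t'}-\alpha_{t''})=\sum_c r\,d(\chi_i x^i_c)$. Your device of writing the $k$-th block in terms of $\alpha_{\min(t,s_k)}-\alpha_{\min(t,s_{k-1})}$ is exactly a compact rewriting of the paper's inductive step, in which the forms already constructed on $[0,k/n]$ are frozen at their $t=k/n$ value on $[k/n,(k+1)/n]$ and new primitive forms (vanishing at $t=k/n$) are appended; the two constructions produce the same family $\beta^l_t$.

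One point on which your write-up is in fact more careful than the paper's: to verify condition~(2) for the partial sums \emph{within} a block (i.e.\ after adding only some of the $\varphi_i$'s or only some coordinates of a single $\varphi_i$), the paper invokes the convexity statement ``$\alpha_t+s(\alpha_{t'}-\alpha_t)$ is contact for $s\in[0,1]$'', but a partial sum like $\alpha_{k/n}+\sum_{i\le j}\rho^i(\alpha_t-\alpha_{k/n})$ is not of this form with constant $s$ (the extra term $d\!\left(\sum_{i\le j}\rho^i\right)\wedge(\alpha_t-\alpha_{k/n})$ enters $d\alpha^{(j)}_t$). Your $C^1$-smallness argument with the constant $C$ depending only on the fixed data $\{\varphi_i,\chi_i\}$ is the right way to close this, and the paper should be read as implicitly using the same estimate.
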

\begin{proof} If $M$ is compact and with boundary, then we embed it in a bigger manifold $\tilde{M}$ of the same dimension; in fact, we may assume that $\tilde{M}$ is obtained from  $M$ by attaching a collar along the boundary of $M$.
Using the compactness property of $M$, one can cover $M$ by finitely many coordinate neighbourhoods $U^i, i=1,2,\dots,L$.
Choose a partition of unity $\{\rho^i\}$ subordinate to $\{U^i\}$.
\begin{enumerate}\item Since $M$ is compact, the set of all contact forms on $M$ is an open subspace of $\Omega^1(M)$ in the weak topology. Hence, there exists a $\delta>0$ such that $\alpha_t+s(\alpha_{t'}-\alpha_t)$ is contact for all $s\in[0,1]$, whenever $|t-t'|<\delta$.
\item Get an integer $n$ such that $1/n<\delta$.
Define for each $t$ a finite sequence of contact forms, namely $\alpha^j_t$, interpolating between $\alpha_0$ and $\alpha_t$ as follows:
\[\alpha^j_t=\alpha_{[nt]/n}+\sum_{i=1}^j\rho^i(\alpha_t-\alpha_{[nt]/n}),\]
where $[x]$ denotes the largest integer which is less than or equal to $x$ and $j$ takes values $1,2,\dots,L$. In particular, for $k/n\leq t\leq (k+1)/n$, we have
\[\alpha^j_t=\alpha_{k/n}+\sum_{i=1}^j\rho^i(\alpha_t-\alpha_{k/n}),\]
and $\alpha^L_t=\alpha_t$ for all $t$.
\item Let $\{x_j^i:j=1,\dots,m\}$ denote the coordinate functions on $U^i$, where $m$ is the dimension of $M$. There exists unique set of smooth functions $y_{t,k}^{ij}$ defined on $U^i$ satisfying the following relation:
\[\alpha_t-\alpha_{k/n}=\sum_{j=1}^m y_{t,k}^{ij} dx^i_j\ \ \text{on } U^i \text{ for }k/n\leq t\leq (k+1)/n\]
Further, note that $y_{t,k}^{ij}$ depends continuously on the parameter $t$ and $y_{t,k}^{ij}=0$ when $t=k/n$, $k=0,1,\dots,n$.
\item Let $\sigma^i$ be a smooth function such that $\sigma^i\equiv 1$ on a neighbourhood of $\text{supp\,}\rho^i$ and $\text{supp}\,\sigma^i\subset U^i$. Define functions $r^{ij}_{t,k}$ and $s^{ij}$, $j=1,\dots,m$, as follows:
        \[ r_{t,k}^{ij}=\rho^i y_t^{ij} \ \ \ s^{ij}=\sigma^i x^i_j.\]
These functions are compactly supported and supports are contained in $U^i$. It is easy to see that $r^{ij}_{t,k}=0$ when $t=k/n$ and
\[\rho^i(\alpha_t-\alpha_{k/n})=\sum_{j=1}^m r_{t,k}^{ij}\,ds^{ij} \ \text{ for }\ t\in [k/n,(k+1)/n].\]
\end{enumerate}
It follows from the above discussion that $\alpha_t-\alpha_{k/n}$ can be expressed as a sum of primitive forms which depends continuously on $t$ in the interval $[k/n,(k+1)/n]$.
We can now complete the proof by finite induction argument. Suppose that
$(\alpha_{t}-\alpha_0)=\sum_l\alpha_{t,k}^l$ for $t\in [0,k/n]$, where each $\alpha_{t,k}^l$ is a primitive 1-form. Define
\[\begin{array}{rcl}\tilde{\alpha}_{t,k}^l& = & \left\{
\begin{array}{cl} \alpha_{t,k}^l & \text{if } t\in [0,k/n]\\
\alpha_{k/n,k}^l & \text{if } t\in [k/n,(k+1)/n]\end{array}\right.\end{array}\]
Further define for $j=1,\dots,N$, $i=1,\dots,L$,
\[\begin{array}{rcl}\beta_{t,k}^{ij}& = & \left\{
\begin{array}{cl} 0 & \text{if } t\in [0,k/n]\\
r^{ij}_{t,k}\,ds^{ij} & \text{if } t\in [k/n,(k+1)/n]\end{array}\right.\end{array}\]
Finally note that for $t\in [0,(k+1)/n]$, we can write $\alpha_t-\alpha_0$ as the sum of all the above primitive forms. Indeed, if $k/n\leq t<(k+1)/n$, then
\begin{eqnarray*}\alpha_t-\alpha_0 & = & (\alpha_t-\alpha_{k/n})+(\alpha_{k/n}-\alpha_0)\\
& = & \sum_{i=1}^L\sum_{j=1}^m r^{ij}_{t,k}\,ds^{ij}+\sum_l \alpha^l_{k/n,k}\\
& = & \sum_{i,j}\beta^{ij}_{t,k}+\sum_l \tilde{\alpha}^l_{t,k}.\end{eqnarray*}
The same relation holds for $0\leq t\leq k/n$, since $\beta^{ij}_{t,k}$ vanish for all such $t$. This proves the first part of the lemma.

Now suppose that $\alpha_t=\alpha_0$ on an open neighbourhood $U$ of $V_0$. Choose two compact neighbourhoods of $V_0$, namely $K_0$ and $K_1$ such that $K_0\subset \text{Int\,}K_1$ and $K_1\subset U$. Since $M\setminus\text{Int\,}K_1$ is compact we can cover it by finitely many coordinate neighbourhoods $U^i$, $i=1,2,\dots,L$, such that $(\bigcup_{i=1}^L U^i)\cap K_0=\emptyset$. Proceeding as above we get a decomposition of $\alpha_t$ on $\bigcup_{i=1}^L U^i$ into primitive 1-forms $r^l_t\,ds^l_t$. Observe that  $\{U^i:i=1,\dots,L\}\cup\{U\}$ is an open covering of $M$ in this case. The functions $r^l_t$ and $s^l_t$ can be extended to all of $M$ without disturbing their supports. Hence, the functions $r^l_t$ and $s^l_t$ vanish on $K_0$. This completes the proof of the lemma.

\end{proof}

%\begin{theorem}
%\label{GRAY}
%Let $\xi_{t}$, $t\in[0,1]$ be a family of contact structures defined by the contact forms $\alpha_t$ on a compact manifold $M$ with boundary. Let $(N,\tilde{\xi}=\ker\eta)$ be a %contact manifold without boundary. Then every isocontact immersion $f_0:(M,\xi_0)\to (N,\tilde{\xi})$ admits a regular homotopy $\{f_t\}$ such that $f_t:(M,\xi_t)\to (N,\tilde{\xi})$ %is an isocontact immersion for all $t\in[0,1]$.

%In addition, if $M$ contains a compact submanifold $V_{0}$ in its interior and $\xi_{t}=\xi_{0}$ on $\it{Op}(V_{0})$ then $f_{t}$ can be chosen to be a constant homotopy on %$Op\,(V_{0})$.\label{T:equidimensional_contact immersion}
%\end{theorem}

\emph{Proof of Theorem~\ref{T:equidimensional_contact immersion}}. In view of Lemma~\ref{EM2}, it is enough to prove the theorem for a family of contact forms $\alpha_t$, $t\in [0,1]$, satisfying
\[\alpha_{t}=\alpha_{0}+r_tds_t \label{primitive_contact_forms}\]
for some smooth real valued functions $r_t,s_t$ which are (compactly) supported in an open set $U$ of $M$. We shall first show that $f_{0}:(M,\xi_{0})\rightarrow (N,\tilde{\xi})$ can be homotoped to an immersion $f_{1}:M\to N$ such that $f_{1}^{*}\tilde{\xi}=\xi_{1}$ which is a non-parametric version of the stated result.

For simplicity of notation we write $(r,s)$ for $(r_1,s_1)$ and define a smooth embedding $\varphi:U\to U\times\R^2$ by
\[\varphi(u)=(u,s(u),-r(u)) \text{ \ for \ }u\in U.\]
Since $r,s$ are compactly supported $\varphi(u)=(u,0,0)$ for all $u\in Op\,(\partial U)$ and there exist positive constants $\vare_1$ and $\vare_2$ such that $Im\,f$ is contained in $U\times I_{\vare_1}\times I_{\vare_2}$, where $I_\vare$ denotes the open interval $(-\vare,\vare)$ for $\vare>0$. Clearly, $\varphi^*(\alpha_0-y\,dx)=\alpha_0+r\,ds$ and so
\begin{equation}\varphi:(U,\xi_{1})\rightarrow (U\times \R^2,\ker(\alpha_{0}- y\,dx)) \label{eq:equidimensional_1}\end{equation}
is an isocontact embedding.
The image of $\varphi$ is the graph of a smooth function $k=(s,-r):U\rightarrow I_{\varepsilon_{1}}\times I_{\varepsilon_{2}}$ which is compactly supported with support contained in the interior of $U$. Further note that $\pi(\varphi(U))$ is the graph of $s$ and hence a submanifold of $U\times I_{\varepsilon_1}$. Now let $\pi:U\times I_{\varepsilon_{1}}\times I_{\varepsilon_{2}}\rightarrow U\times I_{\varepsilon_{1}}$ be the projection onto the first two coordinates. Since Im\,$\varphi$ is the graph of $k$,  $\pi|_{\text{Im\,}\varphi}$ is an embedding onto the set $\pi(\varphi(U))$ which is the graph of $s$. Now observe that Im\,$\varphi$ can also be viewed as the graph of a smooth function, namely $h:\pi(\varphi(U))\rightarrow I_{\varepsilon_{2}}$ defined by $h(u,s(u))=-r(u)$. It is easy to see that $h$ is compactly supported.

\begin{center}
\begin{picture}(300,140)(-100,5)\setlength{\unitlength}{1cm}
\linethickness{.075mm}

%vertical lines
\multiput(-1,1.5)(6,0){2}
{\line(0,1){3}}
\multiput(-.25,2)(4.5,0){2}
{\line(0,1){2}}

%horizontal lines
\multiput(-1,1.5)(0,3){2}
{\line(1,0){6}}
\multiput(-.25,2)(0,2){2}
{\line(1,0){4.5}}
\put(1.7,1){$U\times I_{\varepsilon_{1}}$}
\put(1.2,2.7){\small{$U$}}
\put(2,3.4){\small{$\pi(\varphi(U))$}}

\multiput(-.9,1.6)(.2,0){30}{\line(1,0){.05}}
\multiput(-.9,1.7)(.2,0){30}{\line(1,0){.05}}
\multiput(-.9,1.8)(.2,0){30}{\line(1,0){.05}}
\multiput(-.9,1.9)(.2,0){30}{\line(1,0){.05}}

\multiput(-.9,4.0)(0,-.1){21}{\line(1,0){.05}}
\multiput(-.7,4.0)(0,-.1){21}{\line(1,0){.05}}
\multiput(-.5,4.0)(0,-.1){21}{\line(1,0){.05}}
\multiput(-.3,4.0)(0,-.1){21}{\line(1,0){.05}}

\multiput(4.3,4.0)(0,-.1){21}{\line(1,0){.05}}
\multiput(4.5,4.0)(0,-.1){21}{\line(1,0){.05}}
\multiput(4.7,4.0)(0,-.1){21}{\line(1,0){.05}}
\multiput(4.9,4.0)(0,-.1){21}{\line(1,0){.05}}

\multiput(-.9,4.1)(.2,0){30}{\line(1,0){.05}}
\multiput(-.9,4.2)(.2,0){30}{\line(1,0){.05}}
\multiput(-.9,4.3)(.2,0){30}{\line(1,0){.05}}
\multiput(-.9,4.4)(.2,0){30}{\line(1,0){.05}}

\multiput(-1,3)(.3,0){20}{\line(1,0){.1}}

%the curve
\multiput(-1,3)(5.1,0){2}{\line(1,0){.9}}
\qbezier(-.1,3)(1,3.1)(1.5,3.8)
\qbezier(1.5,3.8)(1.7,4)(1.9,3.7)
\qbezier(1.9,3.7)(2.1,3)(2.2,2.3)
\qbezier(2.2,2.3)(2.4,2)(2.6,2.2)
\qbezier(2.6,2.2)(3.2,2.9)(4.3,3)

\end{picture}\end{center}
In the above figure, the bigger rectangle represents the set $U\times I_{\varepsilon_{1}}$ and the central dotted line represents $U\times 0$. The curve within the rectangle stands for the domain of $h$, which is also the graph of $s$. We can now extend $h$ to a compactly supported function $H:U\times I_{\varepsilon_{1}}\rightarrow I_{\varepsilon_{2}}$ (see \cite{whitney}) which vanishes on the shaded region and is such that its graph is transversal to $\ker(\alpha_0- y\,dx)$. Indeed, since $\varphi$ is an isocontact embedding it is transversal to $\ker(\alpha_0-y\,dx)$ and hence graph $H$ is transversal to $\ker(\alpha_0-y\,dx)$ on an open neighbourhood of $\pi(\varphi(U))$ for any extension $H$ of $h$. Since transversality is a generic property, we can assume (possibly after a small perturbation) that graph of $H$ is transversal to $\ker(\alpha_0- y\,dx)$.

Let $ \Gamma $ be the graph of $H$; then the image of $\varphi$ is contained in $\Gamma$. By Lemma~\ref{characteristic} there exists a diffeomorphism $\Phi:\Gamma\to U\times I_{\vare_1}$ with the property that
\begin{equation}\Phi^*(\ker(\alpha_{0}\oplus 0))=\ker((\alpha_{0}- y\,dx)|_\Gamma).\label{eq:equidimensional_2}\end{equation}
Next we use $f_0$ to define an immersion $F_{0}:U\times \mathbb R\rightarrow N\times \mathbb{R}$
as follows:
\begin{center} $F_0(u,x)=(f_0(u),x)$ for all $u\in U$ and $x\in \R$.\end{center}
It is straightforward to see that
\begin{itemize}
\item $F_{0}(u,0)\in N \times 0$ for all $u\in U$ and
\item $F_0^*(\eta \oplus 0)$ is a multiple of $\alpha_{0}\oplus 0$ by a nowhere vanishing function on $M\times\R$.
\end{itemize}
Therefore, the following composition is defined:
\[U\stackrel{\varphi}{\longrightarrow}  \Gamma\stackrel{\Phi}{\longrightarrow} U\times I_{\vare_1} \stackrel{F_0}{\longrightarrow} N\times\mathbb{R} \stackrel{\pi_{N}}{\longrightarrow} N, \]
where $\pi_{N}:N\times \mathbb{R}\rightarrow N$ is the projection onto $N$. Observe that $\pi_{N}^*\eta=\eta \oplus 0$ and therefore, it follows from equations (\ref{eq:equidimensional_1}) and (\ref{eq:equidimensional_2}) that the composition map $f_1=\pi_{N} F_{0}\Phi \varphi:(U,\xi_1)\rightarrow (N,\tilde{\xi})$ is isocontact. Such a map is necessarily an immersion.

Let $K=(\text{supp\,}r\cup\text{supp\,}s)$. Take a compact set $K_1$ in $U$ such that $K\subset \text{Int\,}K_1$, and let $\tilde{U}=U\setminus K_1$. If $u\in \tilde{U}$ then $\varphi(u)=(u,0,0)$. This gives $h(u,0)=0$ for all $u\in\tilde{U}$. We can choose $H$ such that $H(u,t)=0$ for all $(u,t)\in\tilde{U}\times I_{\vare_1}$. Then, by Remark~\ref{R:characteristic}, $\Phi(u,0,0)=(u,0)$ for all $u\in\tilde{U}$. Consequently,
\[f_1(u)=\pi_{N} F_{0}\Phi \varphi(u)=\pi_{N} F_{0}(u,0)=\pi_N(f_0(u),0)=f_0(u) \ \text{for all } u\in\tilde{U}.\]
In other words, $f_1$ coincides with $f_0$ outside an open neighbourhood of $K$.

Now, if we have a continuous family of contact forms $\alpha_t$ as in equation (\ref{primitive_contact_forms}) then define
%smooth compactly supported functions $(r_t,s_t)$ parametrized by $t\in [0,1]$ such that $\text{supp\,}r_t\subset U$ and $\text{supp\,}s_t\subset U$,
\[\varphi_t(u)=(u,s_t(u),-r_t(u)) \text{ \ for \ }u\in U.\] Since each $\varphi_t$ has compact support, it follows that $\cup_{t\in[0,1]}\varphi_t(U)$ is a compact subset of $U\times\R^2$ and there exist positive constants $\vare_1$ and $\vare_2$ such that $\varphi_t(U)\subset U\times I_{\varepsilon_{1}}\times I_{\varepsilon_{2}}$ for all $t\in [0,1]$.  Proceeding exactly as before we get a continuous family of smooth functions $H_t$ such that their graphs $\Gamma_t$ are transversal to $\ker(\alpha_0-y\,dx)$. By applying Proposition~\ref{characteristic} we then get a continuous family of homeomorphisms $\Phi_t:\Gamma_t\to U\times I_{\vare_1}$ which pull back the $\ker(\alpha_{0}\oplus 0)$ onto $\ker(\alpha_{0}- y\,dx)|_{\Gamma_t}$. The desired homotopy $f_t$ is then defined by $f_t=\pi_{N} F_{0}\Phi_t \varphi_t$.
This completes the proof of the theorem.\qed

\begin{remark}{\em A symplectic version of the above result was proved by Ginzburg in \cite{ginzburg}.}\end{remark}

Every open manifold admits a Morse function $f$ without a local maxima. The codimension of the Morse complex of such a function is, therefore, strictly positive (\cite{milnor_morse},\cite{milnor}). The gradient flow of $f$ brings the manifold into an arbitrary small neighbourhood of the Morse complex. In fact, one can get a polyhedron $K\subset M$ such that codim\,$K>0$, and an isotopy $\phi_t:M\to M$, $t\in[0,1]$, such that $K$ remains pointwise fixed and $\phi_1$ takes $M$ into an
arbitrarily small neighborhood $U$ of $K$. The polyhedron $K$ is called a \emph{core} of $M$.
We shall now deduce from the above theorem, the existence of isocontact immersions of an open manifold $M$ into itself which compress the manifold $M$ into an arbitrary small neighbourhoods of its core.

\begin{corollary}
\label{CO}
Let $(M,\xi=\ker\alpha)$ be an open contact manifold and let $K$ be a core of it. Then for a given neighbourhood $U$ of $K$ in $M$ there exists a homotopy of isocontact immersions $f_{t}:(M,\xi)\rightarrow (M,\xi), t\in[0,1]$, such that $f_{0}=id_{M}$ and $f_{1}(M)\subset U$.
\end{corollary}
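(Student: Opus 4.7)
The plan is to combine the classical Morse-theoretic compression of open manifolds with Theorem~\ref{T:equidimensional_contact immersion}, the contact analog of Ginzburg's open-manifold stability.

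First I would invoke Morse theory: since $M$ is open, a Morse function without local maxima (whose Morse complex has $K$ as a deformation retract) yields, via a suitable downward gradient vector field, a smooth isotopy $\phi_t\colon M\to M$, $t\in[0,1]$, with $\phi_0=\mathrm{id}_M$, each $\phi_t$ a diffeomorphism of $M$ onto its image, and $\phi_1(M)\subset U$. This $\phi_t$ is not isocontact in general.

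Next I would define the continuous family of contact forms $\alpha_t:=\phi_t^*\alpha$ on $M$ (these are contact forms because each $\phi_t$ is a local diffeomorphism), with $\alpha_0=\alpha$. Observe that $\phi_t\colon (M,\alpha_t)\to (M,\alpha)$ is tautologically an isocontact embedding with $\phi_1(M)\subset U$. I would then apply Theorem~\ref{T:equidimensional_contact immersion} to the family $\alpha_t$, taking the initial isocontact immersion to be $\mathrm{id}_M\colon (M,\alpha)\to (M,\alpha)$; the open-manifold hypothesis is accommodated by a compact exhaustion $M_1\subset M_2\subset\cdots$ of $M$ together with the ``In addition'' clause of the theorem, which makes the constructions on successive pieces compatible. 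The output is a regular homotopy $f_t\colon M\to M$ with $f_t\colon (M,\alpha_t)\to (M,\alpha)$ isocontact for every $t$ and $f_0=\mathrm{id}_M$.

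The main obstacle is that the $f_t$ so produced are isocontact only with respect to the varying source structures $\alpha_t$, whereas the corollary demands isocontact immersions for the fixed source structure $\alpha$. To bridge this gap one exploits the explicit form of the construction in the proof of Theorem~\ref{T:equidimensional_contact immersion}: the $f_t$ are built from contact Hamiltonian flows driven by the primitive decomposition of $\alpha_t-\alpha_0$, and they factor as $f_t=\phi_t\circ c_t$ where $c_t$ is a contactomorphism of $(M,\alpha_t)$. Composing with the appropriate contactomorphism of $(M,\alpha)$ converts the whole family into a homotopy of isocontact immersions $(M,\alpha)\to (M,\alpha)$ starting at $\mathrm{id}_M$ and ending with image contained in $\phi_1(M)\subset U$. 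Making this ``varying source to fixed source'' translation precise is the delicate step of the proof, and it is precisely where Theorem~\ref{T:equidimensional_contact immersion} (rather than Gray's stability, which would fail on the open manifold $M$) does the real work.
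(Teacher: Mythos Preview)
Your proposal has a genuine gap at exactly the point you flag as the ``main obstacle.'' Applying Theorem~\ref{T:equidimensional_contact immersion} with $f_0=\mathrm{id}_M$ and the family $\alpha_t=\phi_t^*\alpha$ produces immersions $f_t:(M,\xi_t)\to(M,\xi)$, but $\phi_t$ itself is already such a map; you have gained nothing. What is actually needed is an isocontact immersion $\psi_t:(M,\xi)\to(M,\xi_t)$ (for then $\phi_t\circ\psi_t$ solves the problem), and Theorem~\ref{T:equidimensional_contact immersion} as stated does not hand you this: it varies the \emph{source} structure, not the target. Your claimed factorisation $f_t=\phi_t\circ c_t$ with $c_t$ a contactomorphism is not a consequence of the construction in Theorem~\ref{T:equidimensional_contact immersion} (the maps there are built from projections and graph-diffeomorphisms and have no reason to factor through $\phi_t$), and if the ``appropriate contactomorphism of $(M,\alpha)$'' you invoke at the end were a contactomorphism $(M,\xi)\to(M,\xi_t)$, that would be precisely Gray stability on the open manifold $M$ --- which you yourself note is unavailable.

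The paper resolves this with a trick you are missing: a two-parameter family. One sets $\eta_{t,s}=\xi_{t(1-s)}$ and, for each $t$, applies Theorem~\ref{T:equidimensional_contact immersion} in the variable $s$ on a compact piece $V$, with \emph{target} $(M,\xi_t)$ and initial immersion the inclusion $(V,\xi_t)\hookrightarrow(M,\xi_t)$. At $s=1$ this yields $\tilde f_{t,1}:(V,\xi)\to(M,\xi_t)$ isocontact, with $\tilde f_{0,1}=\mathrm{id}$; composing with the previous-stage map $f^i_t$ gives $f^{i+1}_t=f^i_t\circ\tilde f_{t,1}$, which now pulls $\xi$ back to $\xi$ on $V$. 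The relative clause of Theorem~\ref{T:equidimensional_contact immersion} makes these corrections agree on smaller compacta, so an exhaustion $V_0\subset V_1\subset\cdots$ of $M$ lets one pass to the limit. The essential idea you are missing is to let the \emph{target} carry the varying structure $\xi_t$ (parametrically in $t$) and use the theorem in the $s$-direction to interpolate from $\xi_t$ back to $\xi$ on the source.
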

\begin{proof}Since $K$ is a core of $M$ there is an isotopy $g_t$ such that $g_0=id_M$ and $g_1(M)\subset U$.
Using $g_t$, we can express $M$ as $M=\bigcup_{0}^{\infty}V_{i}$, where $V_{0}$ is a compact neighbourhood of $K$ in $U$ and $V_{i+1}$ is diffeomorphic to $V_i\bigcup (\partial V_{i}\times [0,1])$ so that $\bar{V_i}\subset \text{Int}\,(V_{i+1})$ and $V_{i+1}$ deformation retracts onto $V_{i}$. If $M$ is a manifold with boundary then this sequence is finite. We shall inductively construct a homotopy of immersions $f^{i}_{t}:M\rightarrow M$ with the following properties:
\begin{enumerate}
\item $f^i_0=id_M$
\item $f^i_1(M)\subset U$
\item $f^i_t=f^{i-1}_t$ on $V_{i-1}$
\item $(f^i_t)^*\xi=\xi$ on $V_{i}$.
\end{enumerate}
Assuming the existence of $f^{i}_{t}$, let $\xi_{t}=(f^{i}_{t})^{*}(\xi)$ (so that $\xi_0=\xi$, and consider a 2-parameter family of contact structures defined by $\eta_{t,s}=\xi_{t(1-s)}$. Then for all $t,s\in\I$, we have:
\[\eta_{t,0}=\xi_t,\ \ \eta_{t,1}=\xi_0=\xi\ \text{ and }\ \eta_{0,s}=\xi.\]
The parametric version of Theorem~\ref{T:equidimensional_contact immersion} gives a homotopy of immersions $\tilde{f}_{t,s}:V_{i+2}\rightarrow M$, $(t,s)\in \I\times\I$, satisfying the following conditions:
\begin{enumerate}
 \item $\tilde{f}_{t,0},\tilde{f}_{0,s}:V_{i+2}\hookrightarrow M$ are the inclusion maps
 \item $(\tilde{f}_{t,s})^*\xi_t=\eta_{t,s}$; in particular, $(\tilde{f}_{t,1})^*\xi_t=\xi$
 \item $\tilde{f}_{t,s}=id$ on $V_i$ since $\eta_{t,s}=\xi_0$ on $V_i$.
\end{enumerate}
We now extend the homotopy $\{\tilde{f}_{t,s}|_{V_{i+1}}\}$ to all of $M$ as immersions such that $\tilde{f}_{0,s}=id_M$ for all $s$. By an abuse of notation, we denote the extended homotopy by the same symbol. Define the next level homotopy as follows:
\[f^{i+1}_{t}=f^{i}_{t}\circ \tilde{f}_{t,1} \ \text{ for }\ t\in [0,1].\]
This completes the induction step since $(f^{i+1}_t)^*(\xi)=(\tilde{f}_{t,1})^*\xi_t=\xi$ on $V_{i+2}$ for all $t$, and $f^{i+1}_t|_{V_{i}}=f^i_t|_{V_{i}}$.
To start the induction we use the isotopy $g_t$ and let $\xi_t=g_t^*\xi$. Note that $\xi_t$ is a family of contact structures on $M$ defined by contact forms $g_t^*\alpha$.
For starting the induction we construct $f^{0}_{t}$ as above by setting $V_{-1}=\emptyset$.

Having constructed the family of homotopies $\{f^i_t\}$ as above we set $f_t=\lim_{i\to\infty}f^i_t$ which is the desired homotopy of isocontact immersions.

\end{proof}

\section{An $h$-principle for open relations on open contact manifolds}
We begin with a brief exposition to the theory of $h$-principle. For further details we refer to \cite{gromov_pdr}.
Suppose that $M$ and $N$ are smooth manifolds. Let $J^{r}(M,N)$ be the space of $r$-jets of germs of local maps from $M$ to $N$ (\cite{golubitsky}). The canonical map $p^{(r)}:J^r(M,N)\to M$ which takes a jet $j^r_f(x)$ onto the base point $x$ is a fibration. We shall refer to $J^r(M,N)$ as the $r$-jet bundle over $M$. A continuous map $\sigma:M\to J^r(M,N)$ is said to be a \emph{section} of the jet bundle $p^{(r)}:J^r(M,N)\to M$ if $p^{(r)}\circ \sigma=id_M$. A section of $p^{(r)}$ which is the $r$-jet of some map $f:M\to N$ is called a \emph{holonomic} section of the jet bundle.

A subset $\mathcal{R} \subset J^{r}(M,N)$ of the $r$-jet space is called a \emph{partial differential relation of order} $r$ (or simply a \emph{relation}). If $\mathcal{R}$ is an open subset of the jet space then we call it an \emph{open relation}. A $C^r$ map $f:M\rightarrow N$ is said to be a \emph{solution} of $\mathcal{R}$ if the image of its $r$-jet extension $j^r_f:M\rightarrow J^r(M,N)$ lies in $\mathcal{R}$.

We denote by $\Gamma(\mathcal{R})$ the space of sections of the bundle $J^r(M,N)\rightarrow N$ having images in $\mathcal{R}$. The space of $C^\infty$ solutions of $\mathcal{R}$ is denoted by $Sol(\mathcal{R})$.
If $Sol(\mathcal R)$ and $\Gamma(\mathcal R)$ are endowed with the $C^{\infty}$-compact open topology and the $C^0$-compact open topology respectively, then the $r$-jet map \[j^r:Sol(\mathcal R)\to \Gamma(\mathcal R)\]
taking an $f\in Sol(\mathcal R)$ onto the holonomic section $j^r_f$ is a continuous map which is clearly one to one. Therefore, we can identify $Sol(\mathcal R)$ with the space of holonomic sections of $\mathcal R$.

\begin{definition}{\em A differential relation $\mathcal{R}$ is said to satisfy the \emph{$h$-principle} if every element $\sigma_{0} \in \Gamma(\mathcal{R})$ admits a homotopy $\sigma_{t}\in \Gamma(\mathcal{R})$ such that $\sigma_{1}$ is holonomic.

The relation $\mathcal R$ satisfies the \emph{parametric $h$-principle} \index{parametric $h$-principle} if the $r$-jet map $j^{r}:Sol(\mathcal{R})\rightarrow \Gamma(\mathcal{R})$ is a weak homotopy equivalence.
}\index{$h$-principle}\end{definition}
We shall often talk about (parametric) $h$-principle for certain function spaces without referring to the relations of which they are solutions.
\begin{remark}{\em The space $\Gamma(\mathcal R)$ is referred as the space of formal solutions of $\mathcal R$. Finding a formal solution is a purely (algebraic) topological problem which can be addressed with the obstruction theory. Finding a solution of $\mathcal R$ is, on the other hand, a differential topological problem. Thus, the $h$-principle reduces a differential topological problem to a problem in algebraic topology.}\end{remark}

Next we define the notion of local $h$-principle near a polyhedron.
\begin{definition}{\em Let $K$ be a subset of $M$. We shall say that a relation $\mathcal R$ satisfies the \emph{$h$-principle near $K$}  if given an open set $U$ containing $K$ and a section $F:U\to \mathcal{R}|_U$, there exists an open set $\tilde{U}\subset U$ containing $K$ such that $F|_{\tilde{U}}$ is  homotopic to a holonomic section $\tilde{F}:\tilde{U}\to \mathcal{R}$ in $\Gamma(\mathcal R)$.}\end{definition}
The above $h$-principle will also be referred as an $h$-principle on $Op\,K$.
If $K$ is a subset of $M$ then by $Op\,K$ we shall mean an unspecified open set in $M$ containing $K$. The set $C^k(Op\,K,N)$ will denote the set of all $C^k$ functions which are defined on some open neighbourhood of $K$.
\begin{definition}{\em A function $F:Z\to C^k({Op\,K},N)$ defined on any topological space $Z$ will be called `\emph{continuous}' if there exists an open set $U$ containing $K$ such that each $F(z)$ has an extension $\tilde{F}(z)$ which is defined on $U$ and $z\mapsto \tilde{F}(z)$ is continuous with respect to the $C^k$-compact open topology on the function space. A relation $\mathcal R$ is said to satisfy the \emph{parametric $h$-principle near} $K$ if $j^r:Sol(\mathcal R|_{Op\,K})\to \Gamma(\mathcal R|_{Op\,K})$ is a weak homotopy equivalence.}\end{definition}

%%Let $Z$ be any topological space. Any continuous map $F:Z\to \Gamma(X)$ will be referred as a \emph{parametrized section }of $X$ with parameter space $Z$.
%Let $\mathbb D^n$ denote the unit disc in $\R^n$ and let $\mathbb S^{n-1}$ denote its boundary sphere. Parametric $h$-principle is said to hold for $\mathcal R$ near $K$ if given any %open set containing $\mathcal R$ and a continuous map $F_0:({\mathbb D}^n,\mathbb S^{n-1})\to (\Gamma(\mathcal{R}|_U), Hol(\mathcal{R}|_U))$ there exists an open set $\tilde{U}$, %$M_0\subset \tilde{U}\subset U$, and a homotopy $F_t:{\mathbb D}^i\to \Gamma(\mathcal{R}|_{\tilde{U}})$ satisfying the following conditions:
%\begin{enumerate}\item $F_t(z)=F_0(z)$ for all $z\in {\mathbb S}^{i-1}$ and
%\item $F_1$ maps ${\mathbb D}^i$ into $Sol(\mathcal R|_{\tilde{U}})$.
%\end{enumerate}}
%\end{definition}

Let $\text{Diff}(M)$ be the pseudogroup of local diffeomorphisms of $M$ \cite{geiges}. There is a natural (contravariant) action of $\text{Diff}(M)$ on $J^{r}(M,N)$ given by $\sigma .\alpha:=j^{r}_{f\circ \sigma}(x)$, where $\sigma$ is a local diffeomorphism of $M$ defined near $x\in M$ and $f$ is a representative of the $r$-jet $\alpha$ at $\sigma(x)$. Let $\mathcal D$ be a subgroup of $\text{Diff}(M)$. A differential relation $\mathcal{R}$ is said to be \emph{$\mathcal{D}$-invariant} if the following condition is satisfied:
\begin{center} For every $\alpha\in \mathcal R$ and $\sigma\in \mathcal D$, the element $\sigma.\alpha$ belongs to $\mathcal R$ provided it is defined.\end{center}
We shall denote the element $\sigma.\alpha$ by the notation $\sigma^*\alpha$

The following result, due to Gromov, is the first general result in the theory of $h$-principle.
\begin{theorem} Every open, Diff$(M)$ invariant relation $\mathcal R$ on an open manifold $M$ satisfies the parametric $h$-principle.\label{open-invariant}
\end{theorem}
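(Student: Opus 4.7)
The plan is to exploit two features of an open manifold $M$: the existence of a core $K\subset M$ of positive codimension, and a homotopy of immersions $h_{t}:M\to M$ with $h_{0}=\operatorname{id}_{M}$ that compresses $M$ into any prescribed neighbourhood of $K$. (The smooth analogue of Corollary~\ref{CO} is furnished by the gradient flow of a Morse function on $M$ without local maxima, as recalled in the paragraph just before that corollary.) The proof then splits into two parts: first an $h$-principle for $\mathcal{R}$ on $\operatorname{Op}(K)$, and second a globalisation step that uses Diff$(M)$-invariance together with the compressing immersions.

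For the $h$-principle near $K$, fix a smooth triangulation of $K$ and proceed by induction on its $p$-skeleton $K^{(p)}$. The inductive step reduces to the following local assertion: if $\sigma\in\Gamma(\mathcal{R})$ is defined on a thin tubular neighbourhood of an open $p$-cell $\Delta$ with $p<\dim M$, then after a small diffeotopy of $M$ supported near $\Delta$, the section can be homotoped through formal sections to one that is holonomic on a (possibly thinner) neighbourhood of the diffeomorphic image of $\Delta$, rel the previously constructed holonomic section on a neighbourhood of $K^{(p-1)}$. This is the standard \emph{holonomic approximation} statement for subpolyhedra of positive codimension: its proof uses Taylor expansion in the normal direction to $\Delta$, openness of $\mathcal{R}$ (so that nearby sections remain in $\mathcal{R}$), and Diff$(M)$-invariance of $\mathcal{R}$ (which absorbs the auxiliary wiggling diffeomorphism so that the outcome is a genuine formal homotopy rather than merely an approximation).

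Given an arbitrary $F\in\Gamma(\mathcal{R})$, the first part yields a homotopy $F_{s}\in\Gamma(\mathcal{R})$ with $F_{0}=F$ and $F_{1}$ holonomic on some open $U\supset K$, say $F_{1}|_{U}=j^{r}g$ for some $g:U\to N$. Choose the compressing family $h_{t}$ so that $h_{1}(M)\subset U$. For each $x\in M$ and each $t$ the immersion $h_{t}$ is a local diffeomorphism near $x$, so the jet $(h_{t})^{*}F_{1}(h_{t}(x))$ is well-defined; call this section $G_{t}(x)$. By Diff$(M)$-invariance each $G_{t}$ lies in $\Gamma(\mathcal{R})$, with $G_{0}=F_{1}$ and $G_{1}(x)=j^{r}_{g\circ h_{1}}(x)$ globally holonomic. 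Concatenating $F_{s}$ with $G_{t}$ gives the non-parametric $h$-principle. The parametric version is obtained by running the same two constructions in families over a compact parameter space $P$ (relative to a closed subspace where the section is already holonomic), which is formal once the inductive step near $K$ is arranged to depend continuously on the parameter. I expect the main obstacle to be exactly this local inductive step: one must produce a homotopy whose associated diffeotopy is small enough to be handled by Diff-invariance and simultaneously rel the holonomic section on $\operatorname{Op}(K^{(p-1)})$; once that is carried out, the compression/pullback argument is essentially formal.
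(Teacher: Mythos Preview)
Your proposal is correct and follows exactly the two-step strategy the paper sketches for Theorem~\ref{open-invariant}: first the local $h$-principle on $\operatorname{Op}(K)$ via holonomic approximation over the skeleta of the core, then globalisation by pulling back along a contracting diffeotopy using Diff$(M)$-invariance. One minor remark: since the relation is invariant under the full pseudogroup Diff$(M)$, the compressing family $h_t$ can be taken to be the isotopy coming directly from the gradient flow (an actual diffeotopy, not merely a family of immersions), so the pull-back $(h_t)^*F_1$ is unambiguous and no local-diffeomorphism argument is needed; the more delicate immersion version is only required in the paper's contact refinement (Theorem~\ref{CT}).
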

The $h$-principle can be established in 2 steps. First one proves the local $h$-principle near the core $K$ of $M$ and then lifts the $h$-principle to $M$ by a contracting diffeotopy.

If a relation is invariant under the action of a smaller pseudogroup of diffeomorphism, say $\mathcal D$, then the $h$-principle can still hold if $\mathcal D$ has some additional properties.
\begin{definition}  {\em (\cite{gromov_pdr}) Let $M_{0}$ be a submanifold of $M$ of positive codimension and let $\mathcal{D}$ be a pseudogroup of local diffeomorphisms of $M$. We say that $M_0$ \emph{is sharply movable} by $\mathcal{D}$, if given any hypersurface $S$ in an open set $U$ in $M_0$ and any $\varepsilon>0$, there is an isotopy $\delta_{t}$, $t\in\I$, in $\mathcal{D}$ and a positive real number $r$ such that the following conditions hold:
\begin{enumerate}\item[$(i)$] $\delta_{0}|_U=id_{U}$,
\item[$(ii)$] $\delta_{t}$ fixes all points outside the $\varepsilon$-neighbourhood of $S$,
\item[$(iii)$] $dist(\delta_{1}(x),M_{0})\geq r$ for all $x\in S$ and for some $r>0$,\end{enumerate}
where $dist$ denotes the distance with respect to any fixed metric on $M$.}\label{D:sharp_diffeotopy}\index{sharp diffeotopy}\end{definition}
The diffeotopy $\delta_t$ will be referred as a \emph{sharply moving diffeotopy}.
A pseudogroup $\mathcal D$ is said to have the \emph{sharply moving property} if every submanifold $M_0$ of positive codimension is sharply movable by $\mathcal D$.

We end this section with the following result due to Gromov (\cite{gromov_pdr}).
\begin{theorem}
\label{T:gromov-invariant}
Let $\mathcal{R}\subset J^r(M,N)$ be an open relation which is invariant under the action of a pseudogroup $\mathcal{D}$. If $\mathcal{D}$ sharply moves a submanifold $M_{0}$ in $M$ of positive codimension then the parametric h-principle holds for $\mathcal{R}$ on $Op\,(M_{0})$.
\end{theorem}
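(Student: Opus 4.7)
The plan is to construct, for a given formal solution $F\in\Gamma(\mathcal{R}|_{Op(M_0)})$, a holonomic $\mathcal{R}$-section $\tilde F$ on $Op(M_0)$ together with a homotopy in $\Gamma(\mathcal{R})$ from $F$ to $\tilde F$; the parametric statement then follows by running the same argument over the parameter disk $D^p$, using openness of $\mathcal{R}$ to make the construction uniform on compact parameter sets. I would proceed by skeletal induction, choosing a smooth triangulation $T$ of $M_0$ and building the holonomic section by induction on the dimension $k$ of the skeleton $M_0^{(k)}$.

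For the base case $k=0$, at each vertex $v$ I invoke Borel's lemma to pick a germ $f_v$ whose $r$-jet at $v$ equals $F(v)$; by openness of $\mathcal{R}$, the holonomic section $j^r f_v$ lies in $\mathcal{R}$ on some neighborhood of $v$, and a straight-line interpolation in the jet fiber gives a homotopy inside $\mathcal{R}$ from $F$ to this holonomic representative. For the inductive step, assume a holonomic section $\tilde F_{k-1}$ and a homotopy from $F$ to $\tilde F_{k-1}$ have been produced on $Op(M_0^{(k-1)})$. For each open $k$-cell $\sigma$ with $\partial\sigma\subset M_0^{(k-1)}$, choose a codimension-$1$ hypersurface $S$ inside $\sigma$ (thought of as sitting in a chart $U\subset M_0$) and apply the sharp movability hypothesis with a small $\varepsilon$: this yields a diffeotopy $\delta_t\in\mathcal{D}$, fixing everything outside an $\varepsilon$-neighborhood of $S$ (in particular fixing $Op(\partial\sigma)$), with $\mathrm{dist}(\delta_1(x),M_0)\geq r>0$ for $x\in S$.

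By $\mathcal{D}$-invariance, the family $\delta_t^{*}F$ is a homotopy in $\Gamma(\mathcal{R})$; after applying $\delta_1$, the pushed-off image $\delta_1(S)$ lives in a region of $M$ that is separated from $M_0$ by a definite distance $r$, and in that region holonomic approximation is effectively unobstructed: a fresh Borel-type germ together with a generic transversality perturbation furnishes a holonomic $\mathcal{R}$-section $\tilde g$ on $Op(\delta_1(S))$, homotopic to $\delta_1^{*}F$ there. Pulling $\tilde g$ back by $\delta_1^{-1}$ and patching with $\tilde F_{k-1}$ using a cutoff supported away from $\partial\sigma$ yields the desired holonomic extension $\tilde F_k$ on $Op(\sigma)$, together with a homotopy in $\mathcal{R}$ from $F$ to $\tilde F_k$ obtained by concatenating the $\delta_t^{*}$-conjugation, the local Borel homotopy, and its conjugate. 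Iterating over cells of dimension $k$ and then over $k$, and taking the limit on progressively thinner nested tubular zones $Op(M_0^{(k)})$, produces the required holonomic $\tilde F$ and formal homotopy on $Op(M_0)$.

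The main obstacle is the triple interaction that must be managed at every inductive stage: openness of $\mathcal{R}$ (to absorb Borel-approximation error and transversality perturbations), $\mathcal{D}$-invariance (to transport formal sections along $\delta_t$ without leaving $\mathcal{R}$), and the geometric separation guaranteed by sharp movability (to certify that the pushed section lands in a region where holonomic approximation is unobstructed). The book-keeping — choosing $\varepsilon$ at each step small enough that the supports of the various $\delta_t$'s over distinct cells of a given skeleton are disjoint, that the tubular zones nest correctly, and that the concatenated homotopies are continuous in $\Gamma(\mathcal{R})$ — is the technical heart of the argument and is what makes the inductive construction actually converge to a well-defined homotopy on $Op(M_0)$.
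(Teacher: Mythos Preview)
The paper does not prove Theorem~\ref{T:gromov-invariant}; it is stated as a result of Gromov and attributed to \cite{gromov_pdr}, so there is no in-paper proof to compare your proposal against.

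That said, your sketch has the right architecture (skeletal induction over a triangulation of $M_0$, using the sharply moving diffeotopies $\delta_t$ at each stage and $\mathcal{D}$-invariance to keep everything inside $\mathcal{R}$), but the explanation of the core mechanism is off. You write that after pushing $S$ off $M_0$ via $\delta_1$, ``holonomic approximation is effectively unobstructed'' because $\delta_1(S)$ is at distance $\geq r$ from $M_0$, and that a ``Borel-type germ together with a generic transversality perturbation'' then does the job. This is not how the argument works: being far from $M_0$ has no bearing on whether a formal section can be made holonomic there, and Borel's lemma only produces a holonomic jet agreeing with $F$ at a single point, not along a hypersurface. The actual role of sharp movability is different. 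Holonomic approximation near a positive-codimension polyhedron requires wiggling that polyhedron by a $C^0$-small diffeotopy; for a merely open relation $\mathcal{R}$ that diffeotopy can be arbitrary, but here $\mathcal{R}$ is only $\mathcal{D}$-invariant, so the wiggling must be done by elements of $\mathcal{D}$. The sharp movability hypothesis is exactly what guarantees that $\mathcal{D}$ contains enough diffeotopies normal to each skeleton to carry out the holonomic approximation (or, in Gromov's language, to verify the microextension/flexibility property over cells), and $\mathcal{D}$-invariance then ensures the conjugated sections $\delta_t^*F$ remain in $\mathcal{R}$.

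So your inductive scaffolding is sound, but the step ``push off, then holonomic approximation is free'' needs to be replaced by the correct use of $\delta_t$ as the wiggling diffeotopy inside the holonomic approximation argument itself.
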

We shall now prove Theorem~\ref{CT} by an application of the above theorem.\\
%\begin{theorem}
%\label{CT}
%Let $(M,\alpha)$ be an open contact manifold and $\mathcal{R}\subset J^{r}(M,N)$ be an open relation invariant under the action of the pseudogroup of local contactomorphisms of %$(M,\alpha)$. Then parametric $h$-principle holds for $\mathcal{R}$.
%\end{theorem}
\emph{Proof of Theorem~\ref{CT}}.
Let $\mathcal{D}$ denote the pseudogroup of contact diffeomorphisms of $M$. We shall first show that $\mathcal{D}$ has the sharply moving property (see Definition~\ref{D:sharp_diffeotopy}). Let $M_0$ be a submanifold of $M$ of positive codimension. Take a closed hypersurface $S$ in $M_0$ and an open set $U\subset M$ containing $S$. We take a vector field $X$ along $S$ which is transversal to $M_{0}$. Let $H:M\rightarrow \mathbb{R}$ be a function such that
\[\alpha(X)=H,\ \ \ \  i_X d\alpha|_\xi=-dH|_\xi, \ \ \text{at points of } S.\]
(see equation~\ref{contact_hamiltonian1}). The contact-Hamiltonian vector field $X_H$ is clearly transversal to $M_0$ at points of $S$. As transversality is a stable property and $U$ is small, we can assume that $X_{H}\pitchfork U$. Now consider the initial value problem
\[\frac{d}{dt}\delta_{t}(x)=X_{H}(\delta_t(x)), \ \ \delta_0(x)=x\]
The solution to this problem exists for small time $t$, say for $t\in [0,\bar{\varepsilon}]$, for all $x$ lying in some small enough neighbourhood of $S$. Moreover, since $X_H$ is transversal to $S$, there would exist a positive real number $\vare$ such that the integral curves $\delta_t(x)$ for $x\in S$ do not meet $M_0$ during the time interval $(0,\vare)$. Let \[S_{\varepsilon}=\cup_{t\in[0,\varepsilon/2]}\delta_{t}(S).\]
Take a smooth function $\varphi$ which is identically equal to 1 on a small neighbourhood of $S_{\varepsilon}$ and supp\,$\varphi\subset \cup_{t\in[0,\varepsilon)}\delta_t(S)$. We then consider the initial value problem with $X_{H}$ replaced by $X_{\varphi H}$. Since $X_{\varphi H}$ is compactly supported the flow of $X_{\varphi H}$, say  $\bar{\delta_{t}}$, is defined for all time $t$. Because of the choice of $\varphi$, the integral curves $\bar{\delta}_t(x_0)$, $x_0\in M_0$, cannot come back to $M_0$ for $t>0$. Hence, we have the following:
\begin{itemize}
 \item $\bar{\delta}_0|_U=id_U$
 \item $\bar{\delta_t}=id$ outside a small neighbourhood of $S_{\varepsilon}$
 \item $dist(\bar{\delta}_1(x),M_0)>r$ for all $x\in S$ and for some $r>0$.
\end{itemize}
This proves that $\mathcal{D}$ sharply moves any submanifold of $M$ of positive codimension.

Since $M$ is open it has a core $K$ which is of positive codimension. Since the relation $\mathcal R$ is open and invariant under the action of $\mathcal D$, we can apply Theorem~\ref{T:gromov-invariant} to conclude that $\mathcal R$ satisfies the parametric $h$-principle near $K$. We shall now lift the $h$-principle from $Op\,K$ to all of $M$ by appealing to Corollary~\ref{CO}.

By the local $h$-principle near $K$, an arbitrary section $F_0$ of $\mathcal R$ admits a homotopy $F_{t}$ in $\Gamma(\mathcal{R}|_U)$ such that $F_1$ is
holonomic on $U$, where  $U$ is an open neighbourhood of $K$ in $M$.
Let $f_{t}=p^{(r)}\circ F_t$, where $p^{(r)}:J^{r}(M,N)\rightarrow N$ is the canonical projection map of the jet bundle. By Corollary~\ref{CO} above we get a homotopy of isocontact immersions $g_{t}:(M,\xi)\rightarrow (M,\xi)$ satisfying $g_{0}=id_{M}$ and $g_{1}(M)\subset U$, where $\xi=\ker\alpha$. The concatenation of the homotopies $g_t^*(F_0)$ and $g_1^*(F_t)$ gives the desired homotopy in $\Gamma(\mathcal R)$ between $F_0$ and the holonomic section $g_1^*(F_1)$. This proves that $\mathcal R$ satisfies the ordinary $h$-principle.

To prove the parametric $h$-principle, take a parametrized section $F_z\in \Gamma(\mathcal{R})$, $z\in \D^n$, such that $F_z$ is holonomic for all $z\in \mathbb S^{n-1}$. This implies that there is a family of smooth maps $f_z\in Sol(\mathcal R)$, parametrized by $z\in \mathbb S^{n-1}$, such that $F_z=j^r_f(z)$. We shall  homotope the parametrized family $F_z$ to a family of holonomic sections in $\mathcal R$ such that the homotopy remains constant on $\mathbb S^{n-1}$. By the parametric $h$-principle near $K$, there exists an open neighbourhood $U$ of $K$ and a homotopy $H:\D^n\times\I\to \Gamma(\mathcal R|_U)$, such that $H^0_z=F_z$ and $H_z^1$ is holonomic for all $z\in \D^n$; furthermore,  $H_z^t=j^r_f(z)$ on $U$ for all $z\in \mathbb S^{n-1}$.

Let $\delta:[0,1/2]\to [0,1]$ be the linear homeomorphism such that $\delta(0)=0$ and $\delta(1/2)=1$. Define a function $\mu$ as follows:
\begin{eqnarray*}\mu(z)   = &  \delta(\|z\|)z/\|z\| & \text{ if }\ \|z\|\leq 1/2.\end{eqnarray*}
 First deform $F_z$ to $\widetilde{F}_z$, where
%, where $\delta_s:[0,1]\to [0,1]$ is a homotopy starting from $id:[0,1]\to [0,1]$ and ending at $\delta$ defined as $\delta(\tau)=0,\ \tau\in[0,1/3]$, $\delta_1=\delta,\ on\ %[1/3,2/3]$ and $\delta_1(\tau)=1,\ \tau\in[2/3,1]$. So
\[\begin{array}{rcl}\widetilde{F}_z & = & \left\{
\begin{array}{ll}
F_{\mu(z)} & \text{if } \|z\|\leq 1/2\\
F_{z/\|z\|} & \text{if } 1/2\leq\|z\|\leq 1\end{array}\right.\end{array}\]
Let $\bar{\delta}:[1/2,1]\to [0,1]$ be the linear homeomorphism such that $\bar{\delta}(1/2)=1$ and $\bar{\delta}(1)=0$.
Define a homotopy $\widetilde{F}^s_z$ of $\tilde{F}_z$ as follows:
\[\begin{array}{rcl}\widetilde{F}_z^s & = & \left\{
\begin{array}{ll}
g_s^*(F_{\mu(z)}), &  \|z\|\leq 1/2\\
g^*_{s\bar{\delta}(\|z\|)}(F_{z/\|z\|}) & 1/2\leq\|z\|\leq 1\end{array}\right.\end{array}\]
Note that
\[\begin{array}{rcl}\widetilde{F}^1_z & = & \left\{
\begin{array}{ll}
g_1^*(F_{\mu(z)}), & \|z\|\leq 1/2\\
g^*_{\bar{\delta}(\|z\|)}(F_{z/\|z\|}) & 1/2\leq\|z\|\leq 1\end{array}\right.\end{array}\]
Finally we consider a parametrized homotopy given as follows:
\[\begin{array}{rcl}\widetilde{H}^s_z & = & \left\{
\begin{array}{ll}
g_1^*(H^s_{\mu(z)}), & \|z\|\leq 1/2\\
g^*_{\bar{\delta}(\|z\|)}(F_{z/\|z\|}) & 1/2\leq\|z\|\leq 1\end{array}\right.\end{array}\]
Note that $\widetilde{H}^1_z$ is holonomic for all $z\in\D^n$ and $\widetilde{H}^s_z=j^r_f(z)$ for all $z\in \mathbb S^{n-1}$. The concatenation of the three homotopies now give a homotopy between the parametrized sections $F_z$ and $\tilde{H}^1_z$ relative to $\mathbb S^{n-1}$. This proves the parametric $h$-principle for $\mathcal R$.
\qed\\

\section{Transversality Theorem on open contact manifolds}
Throughout this section, $M$ is a contact manifold with a given contact form $\alpha$ and $N$ is a foliated manifold with a smooth foliation $\mathcal F_N$ of even codimension.
\begin{definition} {\em  A foliation $\mathcal F$ on $M$ will be called a \emph{contact foliation subordinate to} $\alpha$ or, a \emph{contact foliation on} $(M,\alpha)$ if the leaves of $\mathcal F$ are contact submanifolds of $(M,\alpha)$.\label{subordinate_contact_foliation}}
\end{definition}
Recall that a leaf $L$ of an arbitrary foliation on $M$ admits an injective immersion $i_L:L\to M$. We shall say that $L$ is a contact submanifold of $(M,\alpha)$ if the pullback form $i_L^*\alpha$ is a contact form on $L$.
\begin{remark}{\em In view of Lemma~\ref{L:contact_submanifold}, $\mathcal F$ is a contact foliation on $(M,\alpha)$ if and only if $T\mathcal F$ is transversal to the contact distribution $\ker\alpha$ and $T\mathcal F\cap \ker\alpha$ is a symplectic subbundle of $(\ker\alpha,d'\alpha)$.}\label{R:tangent_contact_foliation}\end{remark}
Let $Tr_\alpha(M,\mathcal F_N)$ and $\mathcal E_\alpha(TM,\nu\mathcal F_N)$ be as in Section 1.
%the space of smooth maps $f:M\to N$ transversal to $\mathcal F_N$ for which the inverse foliations $f^{-1}\mathcal F_N$ are contact foliations on $M$ subordinate to $\alpha$. Let  be %the space of all vector bundle morphisms $F:TM\rightarrow TN$ such that
%\begin{enumerate}
%\item $\pi\circ F:TM\to\nu\mathcal F_N$ is an epimorphism and
%\item $\ker(\pi\circ F)\cap \ker\alpha$ is a symplectic subbundle of $(\ker\alpha,d'\alpha)$,\end{enumerate}
%where $\pi:TN\to \nu\mathcal F_N$ is the quotient map.
%We endow $Tr_\alpha(M,\mathcal F_N)$ and $\mathcal{E}_\alpha(TM,\nu\mathcal F_N)$ with $C^{\infty}$ compact open topology and $C^{0}$ compact open topology respectively. The main %result of this section can now be stated as follows:
%\begin{theorem}\label{T:contact-transverse}
%Let $(M,\alpha)$ be an open contact manifold and $(N,\mathcal F_N)$ be any foliated manifold. Suppose that the codimension of $\mathcal F_N$ is even and is strictly less than the  %dimension of $M$. Then
%\[\pi\circ d:Tr_\alpha(M,\mathcal F_N)\to\mathcal{E}_\alpha(TM,\nu\mathcal F_N)\]
%is a weak homotopy equivalence.\end{theorem}
We define a first order differential relation $\mathcal R$ consisting of all 1-jets represented by triples $(x,y,F)$, where $x\in M, y\in N$ and $F:T_{x}M\rightarrow T_{y}N$ is a linear map such that
\begin{enumerate}\item $\pi\circ F:T_xM\to \nu(\mathcal F_N)_y$ is an epimorphism
%\item $\ker(\pi\circ F)$ is transversal to $\ker\alpha_x$ and
\item $\ker(\pi\circ F)\cap \ker\alpha_x$ is a symplectic subspace of $(\ker\alpha_x,d'\alpha_x)$. \end{enumerate}
Then it is easy to note that the space of sections of $\mathcal R$ can be identified with $\mathcal E_\alpha(TM,\nu(\mathcal F_N))$.

\begin{observation} {\em The solution space of $\mathcal R$ is the same as $Tr_\alpha(M,\mathcal F)$. To see this, it is sufficient to note (see Definition~\ref{contact_submanifold}) that the following two statements are equivalent:
\begin{enumerate}
\item[(S1)] $f:M\to N$ is transversal to $\mathcal F_N$ and the leaves of the inverse foliation $f^*\mathcal F_N$ are contact submanifolds (immersed) of $M$.
\item[(S2)] $\pi\circ df$ is an epimorphism and $\ker (\pi\circ df)\cap \ker\alpha$ is a symplectic subbundle of $(\ker\alpha,d'\alpha)$.
\end{enumerate}}
Hence Theorem~\ref{T:contact-transverse} states that the relation $\mathcal R$ satisfies the parametric $h$-principle.
\label{P:solution space}\end{observation}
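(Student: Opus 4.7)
The plan is to unpack the two conditions (S1) and (S2) pointwise in terms of the $1$-jet of $f$ and to reduce their equivalence to Lemma~\ref{L:contact_submanifold} plus a short dimension count. The goal is simply to identify $Sol(\mathcal R)$ with $Tr_\alpha(M,\mathcal F_N)$ as sets (the topologies already agree tautologically via the $1$-jet map), whereupon Theorem~\ref{T:contact-transverse} becomes the statement that $\mathcal R$ satisfies the parametric $h$-principle.

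First I would recall that, by the very definition of transversality, $f:M\to N$ is transversal to $\mathcal F_N$ precisely when for each $x\in M$ the image $df_x(T_xM)$ together with $T_{f(x)}\mathcal F_N$ spans $T_{f(x)}N$; equivalently, the composite $\pi\circ df_x:T_xM\to(\nu\mathcal F_N)_{f(x)}$ is surjective. Under this transversality assumption the inverse-image foliation $f^{-1}\mathcal F_N$ is well-defined with tangent distribution $T(f^{-1}\mathcal F_N)=\ker(\pi\circ df)$. Applying Lemma~\ref{L:contact_submanifold} fibrewise to each leaf $L$ shows that $L$ is a contact submanifold of $(M,\ker\alpha)$ if and only if $\ker(\pi\circ df)$ is transversal to $\ker\alpha$ along $L$ and $\ker(\pi\circ df)\cap\ker\alpha$ is a symplectic subbundle of $(\ker\alpha,d'\alpha)$ along $L$. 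Thus (S1) is literally the conjunction of (S2) together with the transversality of $\ker(\pi\circ df)$ to $\ker\alpha$.

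The only point to verify is therefore that this additional transversality clause is already implied by (S2), giving the converse. This is a one-line dimension check: $\ker\alpha$ has codimension $1$ in $TM$, so failure of transversality at some $x$ would force $\ker(\pi\circ df_x)\subset\ker\alpha_x$, in which case the intersection equals $\ker(\pi\circ df_x)$ itself of dimension $\dim M-2q$. Since $\dim M$ is odd (as $M$ carries a contact form) and $2q$ is even by hypothesis, this dimension is odd; but a symplectic vector space must be even-dimensional, contradicting (S2). Hence (S2) automatically implies the missing transversality, and (S1)$\Leftrightarrow$(S2) follows.

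This identifies $Sol(\mathcal R)=Tr_\alpha(M,\mathcal F_N)$ as subsets of $C^\infty(M,N)$, while the identification $\Gamma(\mathcal R)=\mathcal E_\alpha(TM,\nu\mathcal F_N)$ is tautological from the definition of $\mathcal R$ (since the fibre conditions cutting out $\mathcal R$ are exactly conditions (1)--(3) defining $\mathcal E_\alpha$, with condition (2) being again automatic by the same parity argument). Consequently the map $\pi\circ d$ of Theorem~\ref{T:contact-transverse} coincides with the $1$-jet map $j^1:Sol(\mathcal R)\to\Gamma(\mathcal R)$ under these identifications, so Theorem~\ref{T:contact-transverse} is indeed equivalent to the parametric $h$-principle for $\mathcal R$. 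The only (mild) obstacle in the argument is the even-dimension redundancy check, which is disposed of in a single line.
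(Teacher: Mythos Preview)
Your proof is correct and follows the paper's approach: the paper simply points to the contact-submanifold characterization (Lemma~\ref{L:contact_submanifold}) and declares the equivalence, without writing out any further detail. Your explicit parity argument---showing that transversality of $\ker(\pi\circ df)$ to $\ker\alpha$ is forced by the symplectic condition, so that the two-clause definition of $\mathcal R$ genuinely matches the three-clause definition of $\mathcal E_\alpha$---fills in a step the paper passes over with ``it is easy to note,'' but the route is the same.
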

We will now show that the relation $\mathcal R$ is open and invariant under the action of local contactomorphisms.

\begin{lemma}
\label{OR}
 The relation $\mathcal{R}$ defined above is an open relation.
\end{lemma}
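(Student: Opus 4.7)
The plan is to show that each of the two conditions cutting out $\mathcal{R}$ is an open condition on the 1-jet $(x,y,F)$. Working in local coordinate charts around $x_{0}\in M$ and $y_{0}\in N$, and trivialising $TM$, $TN$, $\nu\mathcal{F}_{N}$, $\ker\alpha$ locally, everything reduces to checking that certain conditions on a linear map $F:V\to W$ between fixed vector spaces (depending continuously on base-point parameters) are open.

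First I would dispose of the epimorphism condition. If $(x_{0},y_{0},F_{0})\in\mathcal{R}$ then $\pi\circ F_{0}$ has maximal rank $2q$; since the set of surjective linear maps between vector spaces of fixed dimensions is open in $\mathrm{Hom}$, and $(x,y,F)\mapsto \pi_{y}\circ F$ is continuous in local trivialisations, condition (1) persists on an open neighbourhood of $(x_{0},y_{0},F_{0})$.

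Next I would observe that, on $\mathcal{R}$, the transversality of $\ker(\pi\circ F)$ with $\ker\alpha_{x}$ is \emph{automatic} from the symplectic condition. Indeed, $\ker\alpha_{x}$ is a hyperplane, so failure of transversality means $\ker(\pi\circ F)\subset\ker\alpha_{x}$; but then the intersection equals $\ker(\pi\circ F)$, whose dimension $\dim M-2q=2n+1-2q$ is odd and hence cannot carry a symplectic form. Consequently, near any point of $\mathcal{R}$ the intersection
\[
K_{(x,y,F)}\ :=\ \ker(\pi\circ F)\cap\ker\alpha_{x}
\]
has constant even dimension $2n-2q$, so it varies continuously as a point of the relevant Grassmannian, and one can choose a locally continuous frame $v_{1}(x,y,F),\ldots ,v_{2n-2q}(x,y,F)$ for $K_{(x,y,F)}$ on a neighbourhood of $(x_{0},y_{0},F_{0})$.

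Finally, non-degeneracy of $d'\alpha_{x}$ on $K_{(x,y,F)}$ is equivalent to the nonvanishing of the determinant of the matrix $\bigl(d'\alpha_{x}(v_{i},v_{j})\bigr)$, which is a continuous real-valued function of the jet. Hence the symplectic condition is also an open condition, and intersecting with the open set from condition (1) shows $\mathcal{R}$ is open. The only subtle step is the transversality observation in the middle, which is what guarantees that $K_{(x,y,F)}$ does not jump in dimension and so varies continuously; once that is in hand, the rest is a standard continuity-of-determinant argument.
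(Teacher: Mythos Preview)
Your argument is correct, but the route differs from the paper's in a way worth noting. You separate the two ingredients---transversality of $\ker(\pi\circ F)$ to $\ker\alpha_x$, and symplecticity of the intersection---and then use a local frame for $K_{(x,y,F)}$ together with nonvanishing of the Gram determinant $\det\bigl(d'\alpha_x(v_i,v_j)\bigr)$. The paper instead packages both ingredients into a single condition: it calls a subspace $K\subset T_xM$ an \emph{almost contact subspace} if $\alpha\wedge(d\alpha)^{m-q}$ restricts to a nonzero top form on $K$, which simultaneously encodes transversality to $\ker\alpha$ and symplecticity of $K\cap\ker\alpha$. It then proves (i) the assignment $L\mapsto\ker(\pi\circ L)$ is a continuous map from the open set of $L$'s satisfying condition~(1) into the Grassmannian $Gr_{2(m-q)+1}(T_xM)$, and (ii) the set $\mathcal{U}_c$ of almost contact subspaces is open in that Grassmannian, via the continuous map $K\mapsto(\alpha\wedge(d\alpha)^{m-q})|_K\in\mathbb{R}\setminus\{0\}$. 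The relation $\mathcal{R}$ is then the preimage of $\mathcal{U}_c$.

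What each approach buys: the paper's single-form criterion is cleaner conceptually and avoids ever having to pick a frame for the intersection or to argue separately that the intersection has locally constant dimension. Your approach is more elementary and hands-on, but the step where you say ``consequently\ldots $K_{(x,y,F)}$ has constant even dimension'' is slightly elliptical: you are implicitly using that transversality to the hyperplane $\ker\alpha_x$ is itself an open condition once (1) holds, so that on a small enough neighbourhood of $(x_0,y_0,F_0)$ both (1) and transversality persist and hence $\dim K_{(x,y,F)}$ is constant. That is of course true, but it would be worth saying explicitly, since without it the frame $v_1,\ldots,v_{2n-2q}$ need not exist continuously.
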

\begin{proof} Let $V$ be a $(2m+1)$-dimensional vector space with a (linear) 1-form $\theta$ and a 2-form $\tau$ on it such that $\theta \wedge \tau^{m}\neq 0$. We shall call $(\theta,\tau)$ an almost contact structure on $V$. Note that the restriction of $\tau$ to $\ker\theta$ is then non-degenerate. A subspace $K$ of $V$ will be called an almost contact subspace if the restrictions of $\theta$ and $\tau$ to $K$ define an almost contact structure on $K$. In this case, $K$ must be transversal to $\ker\theta$ and $K\cap \ker\theta$ will be a symplectic subspace of $\ker\theta$.

Let $W$ be a vector space of even dimension and $Z$ a subspace of $W$ of codimension $2q$. Denote by $L_Z^\pitchfork(V,W)$ the set of all linear maps $L:V\to W$ which are transversal to $Z$. This is clearly an open subset in the space of all linear maps from $V$ to $W$. Define a subset $\mathcal L$ of $L_Z^\pitchfork(V,W)$ by
\[\mathcal L=\{L\in L_Z^\pitchfork(V,W)| \ker(\pi\circ L) \text{ is an almost contact subspace of }V\}\]
We shall prove that $\mathcal L$ is an open subset of $L_Z^\pitchfork(V,W)$.
Consider the map
\[E:L_Z^\pitchfork(V,W)\rightarrow Gr_{2(m-q)+1}(V)\]
\[L\mapsto \ker (\pi\circ L),\]
where $\pi:W\to W/Z$ is the quotient map.
Let $\mathcal U_c$ denote the subset of $G_{2(m-q)+1}(V)$ consisting of all almost contact subspaces $K$ of $V$. Observe that $\mathcal L=E^{-1}(\mathcal U_c)$. We shall now prove that
\begin{itemize}\item $E$ is a continuous map and
\item $\mathcal U_c$ is an open subset of $G_{2(m-q)+1}(V)$.
\end{itemize}
To prove that $E$ is continuous, take $L_0\in L_Z^\pitchfork(V,W)$ and let $K_0=\ker (\pi\circ L_0)$. Consider the subbasic open set $U_{K_0}$ consisting of all subspaces $Y$ of $V$ such that the canonical projection $p:K_0\oplus K_0^\perp\to K_0$ maps $Y$ isomorphically onto $K_0$. The inverse image of $U_{K_0}$ under $E$ consists of all $L:V\to W$ such that $p|_{\ker (\pi\circ L)}:\ker (\pi\circ L)\to K_0$ is onto.
It may be seen easily that if $L\in L_Z^\pitchfork(V,W)$ then
\begin{eqnarray*}  p \text{ maps } \ker (\pi\circ L) \text{ onto }K_0 &
  \Leftrightarrow &  \ker (\pi\circ L)\cap K_0^\perp=\{0\} \\
 &  \Leftrightarrow &  \pi\circ L|_{K_0^\perp}:K_0^\perp\to W/Z \text{ is an isomorphism}.\end{eqnarray*}
Now, the set of all $L$ such that $\pi\circ L|_{K_0^\perp}$ is an isomorphism is an open subset. Hence $E^{-1}(U_{K_0})$ is open and therefore $E$ is continuous.

To prove the openness of $\mathcal U_c$ take $K_0\in\mathcal U$. Recall that a subbasic open set $U_{K_0}$ containing $K_0$ can be identified with the space $L(K_0,K_0^\perp)$,  where $K_0^\perp$ denotes the orthogonal complement of $K$ with respect to some inner product on $V$ (\cite{milnor_stasheff}).
Let $\Theta$ denote the following composition of continuous maps:
\[\begin{array}{rcccl}U_{K_0}\cong L(K_0,K_0^{\perp}) & \stackrel{\Phi}{\longrightarrow} &  L(K_0,V)& \stackrel{\Psi}{\longrightarrow} & \Lambda^{2(m-q)+1}(K_0^*)\cong\R\end{array}\]
where $\Phi(L)=I+L$ and  $\Psi(L)=L^*(\theta\wedge\tau^{2(m-q)})$.
It may be noted that, if $K\in U_{K_0}$ is mapped onto some $T\in L(K_0,V)$ then the image of $T$ is $K$. Hence it follows that
\[{\mathcal U}_c\cap U_{K_0}=(\Psi\circ\Phi)^{-1}(\R\setminus 0)\]
which proves that ${\mathcal U}_c\cap U_{K_0}$ is open. Since $U_{K_0}$ is a subbasic open set in the topology of Grassmannian it proves the openness of $\mathcal U_c$.
Thus $\mathcal L$ is an open subset.

We now show that $\mathcal R$ is an open relation. First note that, each tangent space $T_xM$ has an almost contact structure given by $(\alpha_x,d\alpha_x)$.
Let $U$ be a trivializing neighbourhood of the tangent bundle $TM$. We can choose a trivializing neighbourhood $\tilde{U}$ for the tangent bundle $TN$ such that $T\mathcal F_N$ is isomorphic to $\tilde{U}\times Z$ for some codimension $2q$-vector space in $\R^{2n}$. This implies that $\mathcal R\cap J^1(U,\tilde{U})$ is diffeomorphic with  $U\times\tilde{U}\times\mathcal L$. Since the sets $J^1(U,\tilde{U})$ form a basis for the topology of the jet space, this completes the proof of the lemma.
\end{proof}

\begin{lemma}
\label{IV}
$\mathcal{R}$ is invariant under the action of the pseudogroup of local contactomorphisms of $(M,\alpha)$.
\end{lemma}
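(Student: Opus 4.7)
The plan is to unpack the action of a local contactomorphism on the 1-jet level and then check the two defining conditions of $\mathcal{R}$ one by one. Recall that the action of a local diffeomorphism $\sigma$ of $M$ on a 1-jet $\alpha=j^{1}_{f}(\sigma(x))$ produces the 1-jet $\sigma^{*}\alpha=j^{1}_{f\circ\sigma}(x)$, whose linear part is $F\circ d\sigma_{x}$, where $F=df_{\sigma(x)}:T_{\sigma(x)}M\to T_{y}N$ and $y=f(\sigma(x))$. Thus if $(\sigma(x),y,F)\in\mathcal{R}$, I need to show that $(x,y,F\circ d\sigma_{x})\in\mathcal{R}$ whenever $\sigma$ is a local contactomorphism of $(M,\alpha)$.

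For the first condition, I simply note that $\pi\circ(F\circ d\sigma_{x})=(\pi\circ F)\circ d\sigma_{x}$; since $d\sigma_{x}$ is a linear isomorphism and $\pi\circ F$ is surjective onto $\nu(\mathcal{F}_{N})_{y}$, the composition is still surjective. For the second condition, I use that $\sigma$ being a contactomorphism means $\sigma^{*}\alpha=g\,\alpha$ for some nowhere-vanishing smooth function $g$. Taking exterior derivatives gives $\sigma^{*}(d\alpha)=dg\wedge\alpha+g\,d\alpha$, so on restriction to $\ker\alpha_{x}$ the first summand drops out and we obtain
\[
(\sigma^{*}d\alpha)|_{\ker\alpha_{x}}=g(x)\,d\alpha|_{\ker\alpha_{x}}=g(x)\,d'\alpha_{x}.
\]
Consequently $d\sigma_{x}$ carries $\ker\alpha_{x}$ onto $\ker\alpha_{\sigma(x)}$ and is a conformal symplectic isomorphism between $(\ker\alpha_{x},d'\alpha_{x})$ and $(\ker\alpha_{\sigma(x)},d'\alpha_{\sigma(x)})$ with conformal factor $g(x)$.

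Finally I combine these observations. Because $d\sigma_{x}$ preserves the contact hyperplane,
\[
\ker(\pi\circ F\circ d\sigma_{x})\cap\ker\alpha_{x}
=d\sigma_{x}^{-1}\bigl(\ker(\pi\circ F)\cap\ker\alpha_{\sigma(x)}\bigr).
\]
By hypothesis the subspace on the right (before applying $d\sigma_{x}^{-1}$) is symplectic in $(\ker\alpha_{\sigma(x)},d'\alpha_{\sigma(x)})$, and since a conformal symplectic isomorphism sends symplectic subspaces to symplectic subspaces (non-degeneracy is unaffected by multiplication by a non-zero scalar), the pullback is symplectic in $(\ker\alpha_{x},d'\alpha_{x})$. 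Both defining conditions of $\mathcal{R}$ are therefore satisfied, so $(x,y,F\circ d\sigma_{x})\in\mathcal{R}$. The only subtle step is the conformal-symplectic identity for $d\sigma_{x}$ restricted to the contact hyperplane; everything else is formal. There is no real obstacle to overcome here, the lemma being essentially a direct consequence of the definition of a contactomorphism.
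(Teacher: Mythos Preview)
Your proof is correct and follows essentially the same approach as the paper's: both use that a local contactomorphism $\sigma$ satisfies $\sigma^*\alpha=g\alpha$, deduce that $d\sigma_x$ maps $\ker\alpha_x$ isomorphically onto $\ker\alpha_{\sigma(x)}$ as a conformal symplectic map, and conclude that the kernel condition defining $\mathcal{R}$ is preserved under precomposition with $d\sigma_x$. Your version is in fact slightly more explicit than the paper's---you separately verify the epimorphism condition and spell out the conformal-symplectic identity via the computation $\sigma^*(d\alpha)=dg\wedge\alpha+g\,d\alpha$---but the underlying argument is identical.
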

\begin{proof}
Let $\delta$ be a local diffeomorphism on an open neighbourhood of $x\in M$ such that $\delta^*\alpha=\lambda\alpha$, where $\lambda$ is a nowhere vanishing function on $Op\, x$. This implies that $d\delta_x(\xi_x)=\xi_{\delta(x)}$ and $d\delta_x$ preserves the conformal symplectic structure determined by $d\alpha$ on $\ker \xi$. If $f$ is a local solution of $\mathcal R$ at $\delta(x)$, then
\[d\delta_x(\ker d(f\circ\delta)_x\cap \xi_x)=\ker df_{\delta(x)}\cap\xi_{\delta(x)}.\]
Hence $f\circ\delta$ is also a local solution of $\mathcal R$ at $x$.
Since $\mathcal R$ is open every representative function of a jet in $\mathcal R$ is a local solution of $\mathcal R$. Thus
local contactomorphisms act on $\mathcal R$ by $\delta.j^1_f(\delta(x)) = j^1_{f\circ\delta}(x)$.
\end{proof}

\emph{Proof of Theorem~\ref{T:contact-transverse}}:
In view of Theorem~\ref{CT}, and Lemma~\ref{OR}, \ref{IV} it follows that the relation $\mathcal R$ satisfies the parametric $h$-principle. This completes the proof by Observation ~\ref{P:solution space}.\qed\\
\begin{definition} \emph{A smooth submersion $f:(M,\alpha)\to N$ is called a \emph{contact submersion} if the level sets of $f$ are contact submanifolds of $M$.}
\end{definition}
We shall denote the space of contact submersion $(M,\alpha)\to N$ by $\mathcal C_\alpha(M,N)$.
The space of epimorphisms $F:TM\to TN$ for which $\ker F\cap \ker\alpha$ is a symplectic subbundle of $(\ker\alpha,d'\alpha)$ will be denoted by $\mathcal E_\alpha(TM,TN)$.
Taking $\mathcal F_N$ to be the zero-dimensional foliation on $N$ in Theorem~\ref{T:contact-transverse} we get the following result.
\begin{corollary} Let $(M,\alpha)$ be an open contact manifold. The derivative map
 \[d:\mathcal C_\alpha(M,N)\to \mathcal E_\alpha(TM,TN)\]
is a weak homotopy equivalence.\label{T:contact_submersion}
\end{corollary}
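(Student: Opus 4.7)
The plan is to deduce this corollary from Theorem~\ref{T:contact-transverse} applied to $(N, \mathcal{F}_N)$ where $\mathcal{F}_N$ is the zero-dimensional foliation on $N$ whose leaves are the single points of $N$. The statement is nonvacuous only when a contact submersion $M\to N$ exists, and the fibres of such a submersion must be odd-dimensional contact submanifolds of the odd-dimensional manifold $M$; so one is automatically in the range where $\dim N$ is even and $\dim N<\dim M$, which is precisely the codimension hypothesis of Theorem~\ref{T:contact-transverse} for $\mathcal{F}_N$.

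With this choice of $\mathcal{F}_N$, the normal bundle $\nu\mathcal{F}_N$ is canonically identified with $TN$ and the projection $\pi:TN\to\nu\mathcal{F}_N$ is the identity. I would then translate the two function spaces. On the solution side, $f:M\to N$ is transverse to $\mathcal{F}_N$ if and only if $df$ is everywhere surjective, and the inverse foliation $f^{-1}\mathcal{F}_N$ consists of the connected components of the fibres of $f$. By Lemma~\ref{L:contact_submanifold}, this inverse foliation is a contact foliation on $(M,\alpha)$ precisely when each fibre is a contact submanifold, which is the definition of $f\in\mathcal{C}_\alpha(M,N)$. Hence $Tr_\alpha(M,\mathcal{F}_N)=\mathcal{C}_\alpha(M,N)$.

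On the formal side, the three conditions defining $\mathcal{E}_\alpha(TM,\nu\mathcal{F}_N)$ reduce to: (1) $F:TM\to TN$ is an epimorphism, (2) $\ker F$ is transverse to $\ker\alpha$, and (3) $\ker F\cap\ker\alpha$ is a symplectic subbundle of $(\ker\alpha,d'\alpha)$. The only thing to verify is that (2) is redundant in the presence of (1) and (3), so that this space coincides with $\mathcal{E}_\alpha(TM,TN)$ as defined just before the corollary. This is a parity argument: with $\dim N$ even and $\dim M$ odd, $\ker F$ has odd rank and so cannot be contained in the even-rank bundle $\ker\alpha$; since $\ker\alpha$ has codimension one, this forces $\ker F+\ker\alpha=TM$, which is (2).

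Finally, since $\pi=\mathrm{id}$, the map $\pi\circ d$ of Theorem~\ref{T:contact-transverse} is the derivative map $d$, and the asserted weak homotopy equivalence is an immediate instance of that theorem. There is no genuine obstacle beyond the two identifications above; the mild point worth flagging is the automatic transversality of $\ker F$ and $\ker\alpha$, which rests on the odd/even dimension count.
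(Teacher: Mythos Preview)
Your approach is exactly the paper's: specialize Theorem~\ref{T:contact-transverse} to the zero-dimensional foliation on $N$, so that $\nu\mathcal{F}_N=TN$, $\pi=\mathrm{id}$, $Tr_\alpha(M,\mathcal{F}_N)=\mathcal{C}_\alpha(M,N)$, and $\pi\circ d=d$. The paper records this in a single sentence and does not spell out the identification of $\mathcal{E}_\alpha(TM,\nu\mathcal{F}_N)$ with $\mathcal{E}_\alpha(TM,TN)$; you are right that the point to check is the redundancy of condition~(2).

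There is, however, a slip in your parity argument. The sentence ``$\ker F$ has odd rank and so cannot be contained in the even-rank bundle $\ker\alpha$'' is false as written: an odd-dimensional subspace can certainly sit inside an even-dimensional one. The argument must invoke condition~(3). Since $\ker F\cap\ker\alpha$ is a symplectic subspace of $(\ker\alpha,d'\alpha)$ it has \emph{even} dimension; were $\ker F\subset\ker\alpha$, we would have $\ker F=\ker F\cap\ker\alpha$ of even dimension, contradicting that $\dim\ker F=\dim M-\dim N$ is odd. Hence $\ker F\not\subset\ker\alpha$, and since $\ker\alpha$ has codimension one this forces $\ker F+\ker\alpha=TM$. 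With this correction your proof is complete and coincides with the paper's deduction.
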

\begin{remark}{\em Suppose that $F_0\in \mathcal E_\alpha(TM,TN)$ and $D$ is the kernel of $F_0$. Then $(D,\alpha|_D,d\alpha|_D)$ is an almost contact distribution. Since $M$ is an open manifold, the bundle epimorphism $F_0:TM\to TN$ can be homotoped (in the space of bundle epimorphism) to the derivative of a submersion $f:M\to N$ (\cite{phillips}). Hence the distribution $\ker F_0$ is homotopic to an integrable distribution, namely the one given by the submersion $f$. It then follows from a result proved in \cite{datta_mukherjee} that $(D,\alpha|_D,d\alpha|_D)$ is homotopic to the distribution associated to a contact foliation $\mathcal F$ on $M$. Theorem~\ref{T:contact-transverse} further implies that it is possible to get a foliation $\mathcal F$ which is subordinate to $\alpha$ and is defined by a submersion.}\end{remark}

\section{Foliations and $\Gamma_q$-structures}
\subsection{$\Gamma$-structures \label{classifying space}} We first review some basic facts about $\Gamma$-structures for a topological groupoid $\Gamma$ following \cite{haefliger}. We also recall the connection between foliations on manifolds and $\Gamma_q$ structures, where $\Gamma_q$ is the groupoid of germs of local diffeomorphisms of $\R^q$\index{$\Gamma_q$}). For preliminaries of topological groupoid we refer to \cite{moerdijk}.
\begin{definition}\label{GS}{\em
Let $X$ be a topological space with an open covering $\mathcal{U}=\{U_i\}_{i\in I}$ and let $\Gamma$ be a topological groupoid over a space $B$. A 1-cocycle on $X$ over $\mathcal U$ with values in $\Gamma$ is a collection of continuous maps \[\gamma_{ij}:U_i \cap U_j\rightarrow \Gamma\] such that \[\gamma_{ik}(x)=\gamma_{ij}(x)\gamma_{jk}(x),\ \text{ for all }\ x\in U_i \cap U_j \cap U_k. \]
The above conditions imply that $\gamma_{ii}$ has its image in the space of units of $\Gamma$ which can be identified with $B$ via the unit map $1:B\to \Gamma$. We call two 1-cocycles $(\{U_i\}_{i\in \I},\gamma_{ij})$ and $(\{\tilde{U}_k\}_{k\in K},\tilde{\gamma}_{kl})$ equivalent if for each $i\in I$ and $k\in K$, there are continuous maps \[\delta_{ik}:U_i \cap \tilde{U}_k\rightarrow \Gamma\] such that
\[\delta_{ik}(x)\tilde{\gamma}_{kl}(x)=\delta_{il}(x)\ \text{for}\ x\in U_i \cap \tilde{U}_k \cap \tilde{U}_l\]
\[\gamma_{ji}(x)\delta_{ik}(x)=\delta_{ij}(x)\ \text{for}\ x\in U_i \cap U_j \cap \tilde{U}_k.\]
An equivalence class of a 1-cocycle is called a $\Gamma$-\emph{structure}\index{$\Gamma$-structure}. These structures have also been referred as Haefliger structures in the later literature.}
\end{definition}
For a continuous map $f:Y\rightarrow X$ and a $\Gamma$-structure $\Sigma=(\{U_i\}_{i\in I},\gamma_{ij})$ on $X$, the \emph{pullback $\Gamma$-structure} $f^*\Sigma$ is defined by the covering $\{f^{-1}U_i\}_{i \in I}$ together with the cocycles $\gamma_{ij}\circ f$.

If $f,g:Y\to X$ are homotopic maps and $\Sigma$ is a $\Gamma$-structure on $X$ then the pull-back structures $f^*\Sigma$ and $g^*\Sigma$ are not the same. They are homotopic in the following sense.
\begin{definition}{\em
Two $\Gamma$-structures $\Sigma_0$ and $\Sigma_1$ on a topological space $X$ are called \emph{homotopic} if there exists a $\Gamma$-structure $\Sigma$ on $X\times I$, such that $i_0^*\Sigma=\Sigma_0$ and $i_1^*\Sigma=\Sigma_1$, where $i_0:X\to X\times I$ and $i_1:X\to  X\times I$ are canonical injections defined by $i_t(x)=(x,t)$ for $t=0,1$.}\end{definition}

\begin{definition}{\em
Let $\Gamma$ be a topological groupoid with space of units $B$, source map $\mathbf{s}$ and target map $\mathbf{t}$. Consider the infinite sequences \[(t_0,x_0,t_1,x_1,...)\] with $t_i \in [0,1],\ x_i \in \Gamma$ such that all but finitely many $t_i$'s are zero and $\mathbf{t}(x_i)=\mathbf{t}(x_j)$ for all $i,j$. Two such sequences \[(t_0,x_0,t_1,x_1,...)\] and \[(t'_0,x'_0,t'_1,x'_1,...)\] are called equivalent if $t_i=t'_i$ for all $i$ and $x_i=x'_i$ for all $i$ with  $t_i\neq 0$. Denote the set of all equivalence classes by $E\Gamma$. The topology on $E\Gamma$ is defined to be the weakest topology such that the following set maps are continuous:
\[t_i:E\Gamma \rightarrow [0,1]\ \text{ given by }\ (t_0,x_0,t_1,x_1,...)\mapsto t_i \]
\[x_i: t_i^{-1}(0,1] \rightarrow \Gamma \ \text{ given by }\ (t_0,x_0,t_1,x_1,...)\mapsto x_i.\]
There is also a `$\Gamma$-action' on $E\Gamma$ as follows: Two elements $(t_0,x_0,t_1,x_1,...)$ and $(t'_0,x'_0,t'_1,x'_1,...)$ in $E\Gamma$ are said to be $\Gamma$-equivalent if $t_i=t'_i$ for all $i$,  and  if there exists a $\gamma\in \Gamma$ such that $x_i=\gamma x'_i$ for all $i$ with $t_i\neq 0$. The set of equivalence classes with quotient topology is called the \emph{classifying space of} $\Gamma$, and is denoted by $B\Gamma$\index{$B\Gamma$}.}
\end{definition}

Let $p: E\Gamma \rightarrow B\Gamma$ denote the quotient map. The maps $t_i:E\Gamma \rightarrow [0,1]$ project down to maps $u_i:B\Gamma \rightarrow [0,1]$ such that $u_i \circ p=t_i$. The classifying space $B\Gamma$ has a natural $\Gamma$-structure $\Omega=(\{V_i\}_{i\in I},\gamma_{ij})$, where $V_i=u_i^{-1}(0,1]$ and $\gamma_{ij}:V_i \cap V_j \rightarrow \Gamma$ is given by
\[(t_0,x_0,t_1,x_1,...)\mapsto x_i x_j^{-1}\]
We shall refer to this $\Gamma$ structure as the \emph{universal $\Gamma$-structure}\index{universal $\Gamma$-structure}.

For any two topological groupoids $\Gamma_1,\Gamma_2$ and for a groupoid homomorphism $f:\Gamma_1\rightarrow \Gamma_2$ there exists a continuous map \[Bf:B\Gamma_1\rightarrow B\Gamma_2,\]
defined by the functorial construction.
\begin{definition}{\em (Numerable $\Gamma$-structure)
Let $X$ be a topological space. An open covering $\mathcal{U}=\{U_i\}_{i\in I}$ of $X$ is called \emph{numerable} if it admits a locally finite partition of unity $\{u_i\}_{i\in I}$, such that $u_i^{-1}(0,1]\subset U_i$. If a $\Gamma$-structure can be represented by a 1-cocycle whose covering is numerable then the $\Gamma$-structure is called \emph{numerable}. }
\end{definition}
It can be shown that every $\Gamma$-structure on a paracompact space is numerable.
\begin{definition}{\em Let $X$ be a topological space. Two numerable $\Gamma$-structures are called \emph{numerably homotopic} if there exists a homotopy of numerable $\Gamma$-structures joining them.}
\end{definition}
Haefliger proved that the homotopy classes of numerable $\Gamma$-structures on a topological space $X$ are in one-to-one correspondence with the homotopy classes of continuous maps $X\to B\Gamma$.
\begin{theorem}(\cite{haefliger1})
\label{CMT} Let $\Gamma$ be a topological groupoid and $\Omega$ be the universal $\Gamma$ structure on $B\Gamma$. Then
\begin{enumerate}
\item $\Omega$ is numerable.
\item If $\Sigma$ is a numerable $\Gamma$-structure on a topological space $X$, then there exists a continuous map $f:X\rightarrow B\Gamma$ such that $f^*\Omega$ is homotopic to $\Sigma$.
\item If $f_0,f_1:X\rightarrow B\Gamma$ are two continuous functions, then $f_0^*\Omega$ is numerably homotopic to $f_1^*\Omega$ if and only if $f_0$ is homotopic to $f_1$.
\end{enumerate}
\end{theorem}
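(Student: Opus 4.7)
The plan is to establish the three parts in sequence, with Part (3) furnishing the main technical obstacle. For Part (1), the functions $t_i \colon E\Gamma \to [0,1]$ are invariant under the $\Gamma$-action (which preserves the $t_i$ coordinates by definition), so they descend to continuous maps $u_i$ on $B\Gamma$. By construction only finitely many $t_i$ are nonzero at any point of $E\Gamma$, and the weak topology on $E\Gamma$ propagates this to local finiteness of the family $\{u_i\}$ on $B\Gamma$; normalizing by the locally finite, strictly positive sum $\sum_j u_j$ then yields a partition of unity subordinate to $\{V_i\}$, proving that $\Omega$ is numerable.

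For Part (2), given a numerable structure $\Sigma = (\{U_i\}, \gamma_{ij})$ on $X$ with a subordinate partition of unity $\{v_i\}$, I define the classifying map $f \colon X \to B\Gamma$ by
\[ f(x) \;=\; \bigl[\bigl(v_0(x), \gamma_{0\,k(x)}(x),\, v_1(x), \gamma_{1\,k(x)}(x),\, \ldots \bigr)\bigr], \]
where $k(x)$ is any index with $v_{k(x)}(x) > 0$. The cocycle identity $\gamma_{ij}\gamma_{jk} = \gamma_{ik}$ shows that two choices of $k(x)$ produce representatives related by the $\Gamma$-action on $E\Gamma$, so the class in $B\Gamma$ is well defined; continuity is verified by composing with the defining maps $u_i$ and $\gamma^\Omega_{ij}$ of $\Omega$. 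A direct inspection gives $f^{-1}(V_i) = \{v_i > 0\} \subset U_i$ together with $f^{*}\gamma^{\Omega}_{ij} = \gamma_{ij}$, so that $f^{*}\Omega$ is presented by a common refinement of the cocycle for $\Sigma$ and is therefore equivalent, hence homotopic, to $\Sigma$.

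For Part (3), one direction is immediate: a homotopy $H \colon X \times [0,1] \to B\Gamma$ between $f_0$ and $f_1$ pulls $\Omega$ back to a numerable $\Gamma$-structure on $X \times [0,1]$ that restricts to $f_0^{*}\Omega$ and $f_1^{*}\Omega$ at the two ends. The converse is the principal obstacle. Given a numerable homotopy $\Sigma$ on $X \times [0,1]$ joining $f_0^{*}\Omega$ to $f_1^{*}\Omega$, apply Part (2) to obtain a classifying map $F \colon X \times [0,1] \to B\Gamma$ with $F^{*}\Omega$ equivalent to $\Sigma$; the restrictions $F|_{X\times\{t\}}$ and $f_t$ then classify equivalent $\Gamma$-structures for $t = 0, 1$. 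The whole statement thus reduces to the rigidity lemma that any two classifying maps of a single numerable $\Gamma$-structure on a space $X$ are homotopic, which is the crux of the argument. This lemma is established by linearly interpolating the two partitions of unity and cocycle refinements entering the construction of Part (2); the $\Gamma$-action built into the definition of $B\Gamma$ is precisely what identifies the ambiguities of those choices, so the interpolated family of representatives in $E\Gamma$ descends to a continuous path in $B\Gamma$, producing the desired homotopy between the two classifying maps and completing the proof.
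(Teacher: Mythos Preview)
The paper does not supply its own proof of this theorem: it is stated as a background result and attributed to Haefliger via the citation \cite{haefliger1}, with the narrative immediately continuing to the discussion of $\Gamma_q$-structures. So there is no in-paper argument to compare against.

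Your sketch follows the standard line of Haefliger's original proof (the Milnor join model of $B\Gamma$, the explicit classifying map built from a subordinate partition of unity, and the interpolation argument for uniqueness up to homotopy). A couple of points deserve tightening if you intend this as a genuine proof rather than a sketch. In Part~(1), the assertion that pointwise finiteness of the $t_i$ ``propagates'' to local finiteness of $\{u_i>0\}$ on $B\Gamma$ is not automatic; in Haefliger's treatment one passes through a countable refinement argument (or uses Dold's numeration trick) to produce a genuinely locally finite partition of unity, and this step is where the work lies. In Part~(3), your ``rigidity lemma'' is correct in spirit, but the phrase ``linearly interpolating the two partitions of unity and cocycle refinements'' hides the real mechanism: one typically embeds the two presentations into a common countable indexing (say even and odd indices) and then slides linearly in the join coordinates of $E\Gamma$; the point is that the interpolation takes place in $E\Gamma$ before passing to the quotient, and one must check continuity there. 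With these two steps filled in, your outline recovers Haefliger's proof.
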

\subsection{$\Gamma_q$-structures and their normal bundles}
We now specialise to the groupoid $\Gamma_q$ of germs of local diffeomorphisms of $\mathbb{R}^{q}$. The source map $\mathbf s:\Gamma_q\to \R^q$ and the target map $\mathbf t:\Gamma_q\to \R^q$ are defined as follows: If $\phi\in\Gamma_q$ represents a germ at $x$, then
\[{\mathbf s}(\phi)=x\ \ \text{ and }\ \ {\mathbf t}(\phi)=\phi(x)\]
The units of $\Gamma_q$ consists of the germs of the identity map at points of $\R^q$.
$\Gamma_q$ is topologised as follows: For a local diffeomorphism $f:U\rightarrow f(U)$, where $U$ is an open set in $\mathbb{R}^q$, define $U(f)$ as the set of germs of $f$ at different points of $U$. The collection of all such $U(f)$ forms a basis of some topology on $\Gamma_q$ which makes it a topological groupoid. The derivative map gives a groupoid homomorphism \[\bar{d}:\Gamma_q \rightarrow GL_q(\mathbb{R})\]
which takes the germ of a local diffeomorphism $\phi$ of $\R^q$ at $x$ onto $d\phi_x$.
Thus, to each $\Gamma_q$-structure $\omega$ on a topological space $M$ there is an associated (isomorphism class of) $q$-dimensional vector bundle $\nu(\omega)$ over $M$ which is called the \emph{normal bundle of} $\omega$. In fact, if $\omega$ is defined by the cocycles $\gamma_{ij}$ then the cocycles $\bar{d}\circ \gamma_{ij}$ define the vector bundle  $\nu(\omega)$. Moreover, two equivalent cocycles in $\Gamma_q$ have their normal bundles isomorphic. Thus the normal bundle of a $\Gamma_q$ structure is the isomorphism class of the normal bundle of any representative cocycle. If two $\Gamma_q$ structures $\Sigma_0$ and $\Sigma_1$ are homotopic then there exists a $\Gamma_q$ structure $\Sigma$ on $X\times I$ such that $i_0^*\Sigma=\Sigma_0$ and $i_1^*\Sigma=\Sigma_1$, where $i_0:X\to X\times \{0\}\hookrightarrow X\times I$ and $i_1:X\to X\times \{1\}\hookrightarrow X\times I$ are canonical injective maps. Then $\nu(i_0^*\Sigma_0)\cong i_0^*\nu(\Sigma)\cong i_1^*\nu(\Sigma)\cong \nu(i_1^*\Sigma_1)$. Hence, normal bundles of homotopic $\Gamma_q$ structures are isomorphic.

In particular, we have a vector bundle $\nu\Omega_q$ on $B\Gamma_q$ associated with the universal $\Gamma_q$-structure $\Omega_q$ \index{$\Omega_q$} on $B\Gamma_q$.
\begin{proposition}If a continuous map $f:X\to B\Gamma_q$ classifies a $\Gamma_q$-structure $\omega$ on a topological space $X$, then $Bd\circ f$ classifies the vector bundle $\nu(\omega)$. In particular, $\nu\Omega_q\cong Bd^*E(GL_q(\R))$ and hence $\nu(\omega)\cong f^*\nu\Omega_q$.
\end{proposition}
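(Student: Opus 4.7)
The plan is to exploit the functoriality of Haefliger's classifying space construction under the groupoid homomorphism $\bar d:\Gamma_q\to GL_q(\R)$, together with the tautology that forming the normal bundle is a cocycle-level operation and hence commutes with pullbacks and is invariant under homotopy.

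First I would verify that for any continuous map $g:Y\to X$ and any $\Gamma_q$-structure $\omega$ on $X$ represented by cocycles $\gamma_{ij}:U_i\cap U_j\to\Gamma_q$, the identity $\nu(g^*\omega)\cong g^*\nu(\omega)$ holds. This is immediate from the definitions: the pullback structure $g^*\omega$ is represented by the cocycles $\gamma_{ij}\circ g$, whose images under $\bar d$ are $\bar d\circ\gamma_{ij}\circ g$, which are precisely the cocycles for the pullback of the vector bundle $\nu(\omega)$ defined by $\bar d\circ\gamma_{ij}$. One also has to note that this construction respects the equivalence relation on 1-cocycles (via applying $\bar d$ to the transition data $\delta_{ik}$), so that $\nu$ is well defined on $\Gamma_q$-structures.

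Second, I would identify $\nu\Omega_q$ with $(Bd)^*E(GL_q(\R))$, where $E(GL_q(\R))$ denotes the universal $q$-dimensional vector bundle over $BGL_q(\R)$. By construction the cocycles defining $\Omega_q$ on the open sets $V_i=u_i^{-1}(0,1]\subset B\Gamma_q$ send $(t_0,x_0,t_1,x_1,\dots)$ to $x_ix_j^{-1}$. The induced map $Bd$ takes this point to $(t_0,\bar d(x_0),t_1,\bar d(x_1),\dots)\in BGL_q(\R)$, where the analogous Milnor-type recipe produces the cocycles $\bar d(x_i)\bar d(x_j)^{-1}$ defining the universal vector bundle. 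Thus the defining cocycles of $\nu\Omega_q$ agree with those of $(Bd)^*E(GL_q(\R))$, giving the claimed isomorphism. This is the one step that requires a careful compatibility check between the two classifying space constructions; everything else is formal.

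Finally, since $f$ classifies $\omega$ there is a homotopy of $\Gamma_q$-structures between $f^*\Omega_q$ and $\omega$, and the excerpt already records that homotopic $\Gamma_q$-structures have isomorphic normal bundles. Combining these ingredients gives
\[\nu(\omega)\cong\nu(f^*\Omega_q)\cong f^*\nu\Omega_q\cong f^*(Bd)^*E(GL_q(\R))\cong (Bd\circ f)^*E(GL_q(\R)),\]
which is exactly the assertion that $Bd\circ f$ classifies $\nu(\omega)$, together with the \emph{in particular} clause $\nu(\omega)\cong f^*\nu\Omega_q$.
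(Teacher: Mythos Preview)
Your argument is correct and well organised. The paper, however, does not give a proof of this proposition at all: it is stated as a standard fact about classifying spaces of topological groupoids and left without demonstration. The three ingredients you assemble---that $\nu$ commutes with pullbacks at the cocycle level, that the functorial map $Bd:B\Gamma_q\to BGL_q(\R)$ carries the universal $\Gamma_q$-cocycle to the universal $GL_q(\R)$-cocycle (so $\nu\Omega_q\cong (Bd)^*E(GL_q(\R))$), and homotopy invariance of the normal bundle---are exactly what is needed, and your chain of isomorphisms is the natural way to combine them. There is nothing to add.
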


\subsection{$\Gamma_q$-structures vs. foliations}

If a foliation  $\mathcal F$ on a manifold $M$ is represented by the Haefliger data $\{U_i,s_i,h_{ij}\}$, then we can define a $\Gamma_q$ structure on $M$ by $\{U_i,g_{ij}\}$, where \[g_{ij}(x) = \text{ the germ of } h_{ij} \text{ at } s_i(x) \text{ for }x\in U_i\cap U_j.\]
In particular, $g_{ii}(x)$ is the germ of the identity map of $\R^q$ at $s_i(x)$ and hence $g_{ii}$ takes values in the units of $\Gamma_q$. If we identify the units of $\Gamma_q$ with $\R^q$, then $g_{ii}$ may be identified with $s_i$ for all $i$. Thus, one arrives at a $\Gamma_q$-structure $\omega_{\mathcal F}$ represented by 1-cocycles $(U_i,g_{ij})$ such that \[g_{ii}:U_i\rightarrow \mathbb{R}^q\subset \Gamma_q\] are submersions for all $i$. The functions $\tau_{ij}:U_i\cap U_j\to GL_q(\R)$ defined by $\tau_{ij}(x)=(\bar{d}\circ g_{ij})(x)$ for $x\in U_i\cap U_j$, define the normal bundle of $\omega_{\mathcal F}$. Furthermore, since $\tau_{ij}(x)=dh_{ij}(s_i(x))$, $\nu(\omega_{\mathcal F})$ is isomorphic to the quotient bundle $\nu(\mathcal F)$. Thus a foliation on a manifold $M$ defines a $\Gamma_q$-structure whose normal bundle is embedded in $TM$.

As we have noted above, foliations do not behave well under the pullback operation, unless the maps are transversal to foliations. However, in view of the relation between foliations and $\Gamma_q$ structures, it follows that the inverse image of a foliation by any map gives a $\Gamma_q$-structure. The following result due to Haefliger says that any $\Gamma_q$ structure is of this type.
\begin{theorem}(\cite{haefliger1})
\label{HL}
Let $\Sigma$ be a $\Gamma_{q}$-structure on a manifold $M$. Then there exists a manifold $N$, a closed embedding $s:M \hookrightarrow N$ and a $\Gamma_{q}$-foliation $\mathcal{F}_N$ on $N$ such that $s^*(\mathcal{F}_N)=\Sigma$ and $s$ is a cofibration.
\end{theorem}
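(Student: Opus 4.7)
My plan is to construct $N$ as a gluing of local thickenings of $M$, using the cocycle defining $\Sigma$ as the gluing data. First I would choose a representative cocycle $(U_i,\gamma_{ij})$ for $\Sigma$ with $\{U_i\}$ a locally finite open cover of $M$. Each $\gamma_{ii}\colon U_i\to\R^q$ (via the identification of units of $\Gamma_q$ with $\R^q$) is continuous, and for each pair $i,j$ the value $\gamma_{ij}(x)$ is the germ at $\gamma_{jj}(x)$ of a local diffeomorphism of $\R^q$ sending $\gamma_{jj}(x)$ to $\gamma_{ii}(x)$.

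Next, for each $i$ I would consider the product $U_i\times\R^q$ equipped with the product foliation whose transverse projection is $\pi_i\colon U_i\times\R^q\to\R^q$. On overlaps $U_i\cap U_j$ I would represent the germs $\gamma_{ij}(x)$ by honest local diffeomorphisms $h_{ij}$ of $\R^q$, defined on open neighbourhoods of the graph of $\gamma_{jj}$; after shrinking the $U_i$ and their fibre neighbourhoods as needed, I would identify $(x,v)\in U_j\times\R^q$ with $(x,h_{ij}(v))\in U_i\times\R^q$ whenever $x\in U_i\cap U_j$ and $v$ lies sufficiently close to $\gamma_{jj}(x)$. The cocycle relation $\gamma_{ik}=\gamma_{ij}\gamma_{jk}$ makes this identification an equivalence relation, and since each $h_{ij}$ is a local diffeomorphism of $\R^q$, the quotient $N$ inherits a smooth manifold structure.

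I would then define $s\colon M\to N$ by $s(x)=[x,\gamma_{ii}(x)]\in U_i\times\R^q/\!\sim$ for $x\in U_i$, which is well-defined because $h_{ij}(\gamma_{jj}(x))=\gamma_{ii}(x)$. The local transverse submersions $\pi_i$ descend to a $\Gamma_q$-foliation $\mathcal F_N$ on $N$ of codimension $q$, and unwinding the definitions the pulled-back cocycle $s^{*}(U_i,\gamma_{ij})$ is precisely the original cocycle for $\Sigma$, giving $s^{*}\mathcal F_N=\Sigma$. Moreover $s$ embeds $M$ as a closed submanifold of $N$, and the slices $\{x\}\times\R^q$ assemble into a tubular neighbourhood whose underlying rank-$q$ bundle is isomorphic to the normal bundle $\nu(\Sigma)$ of the $\Gamma_q$-structure; the tubular neighbourhood retraction then witnesses the homotopy extension property, so $s$ is a cofibration.

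The hard part will be the gluing step: since $\gamma_{ij}(x)$ is merely a germ varying with $x$, producing honest representatives $h_{ij}$ on all of $U_i\cap U_j$ that satisfy the cocycle identity on the nose requires simultaneously shrinking the covering and the transverse thickenings in a way that preserves the cocycle relations. This can be handled by passing to a locally finite refinement and shrinking inductively over the (countable) index set, using compactness of closed subordinate neighbourhoods in each step. Once that technical point is arranged, the identifications $s^{*}\mathcal F_N=\Sigma$ and the verification that $s$ is a closed embedding admitting a tubular neighbourhood are immediate from the construction.
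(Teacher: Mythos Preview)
The paper does not prove Theorem~\ref{HL}; it is quoted from Haefliger \cite{haefliger1} and used as a black box in the proof of Theorem~\ref{haefliger_contact}. There is therefore no ``paper's own proof'' to compare against. Your outline is the standard construction from Haefliger's original paper: form the \emph{normal microbundle} of the $\Gamma_q$-structure by thickening each chart to $U_i\times\R^q$ and gluing along representatives of the germs $\gamma_{ij}$, then embed $M$ via the graph of the unit maps $\gamma_{ii}$.

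Two small remarks on your sketch. First, continuity of $\gamma_{ij}:U_i\cap U_j\to\Gamma_q$ in the sheaf (germ) topology already forces the germs to be locally represented by a \emph{single} local diffeomorphism $h_{ij}$ independent of $x$; this is what makes the horizontal foliation $U_i\times\{v\}$ glue coherently and is the reason your ``hard part'' is less severe than you suggest, though one still has to shrink to arrange the triple-overlap compatibilities exactly. Second, the section $s(x)=[x,\gamma_{ii}(x)]$ need only be a continuous closed embedding (a cofibration), not a smooth one---in general $\gamma_{ii}$ is merely continuous, and $s(M)$ is not a smooth submanifold of $N$; this is all that is required, and your phrase ``closed submanifold'' overstates it slightly.
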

Using the above theorem and the transversality result due to Phillips \cite{phillips1}, Haefliger gave the following classification of foliations on open manifolds.
\begin{theorem} The integrable homotopy classes of foliations on an open manifolds are in one-to-one correspondence with the homotopy classes of epimorphisms $TM\to \nu\Omega$.
\end{theorem}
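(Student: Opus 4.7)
The plan is to reproduce Haefliger's original strategy, combining Theorem~\ref{HL} with Phillips' transversality theorem (\cite{phillips1}) for open manifolds. No analogue of Theorem~\ref{T:contact-transverse} is required, since there is no contact constraint to preserve; Phillips' theorem itself suffices.

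First I would define the correspondence in both directions. Starting from a foliation $\mathcal F$ on $M$, the discussion preceding Theorem~\ref{HL} shows that $\mathcal F$ determines a $\Gamma_q$-structure $\omega_{\mathcal F}$ whose normal bundle equals $\nu(\mathcal F)$. By Theorem~\ref{CMT}, a classifying map $f_{\mathcal F}:M\to B\Gamma_q$ exists with $f_{\mathcal F}^*\Omega_q$ homotopic to $\omega_{\mathcal F}$, and the canonical projection $TM\to \nu(\mathcal F)\cong f_{\mathcal F}^*\nu\Omega_q$ then lifts to a bundle epimorphism $(F_{\mathcal F},f_{\mathcal F}):TM\to \nu\Omega$. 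Going the other way, given an epimorphism $(F,f):TM\to \nu\Omega$, I would apply Theorem~\ref{HL} to the pullback $\Gamma_q$-structure $f^*\Omega_q$ to produce a manifold $N$, a foliation $\mathcal F_N$ on $N$, and a closed cofibration $s:M\hookrightarrow N$ with $s^*\mathcal F_N=f^*\Omega_q$. The epimorphism $F$ agrees with the canonical surjection $TN|_{s(M)}\to \nu\mathcal F_N|_{s(M)}$ along $s$; using the cofibration property, extend it to a bundle epimorphism $\bar F:TM\to \nu\mathcal F_N$ covering a continuous map into $N$. Phillips' theorem on the open manifold $M$ then yields a smooth map $g:M\to N$ transversal to $\mathcal F_N$ whose differential is homotopic to $\bar F$ through bundle epimorphisms, and one sets $\Phi(F,f):=g^{-1}\mathcal F_N$.

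Next I would check that the two assignments are inverse up to integrable homotopy. Surjectivity is immediate from the construction. For injectivity, suppose two foliations $\mathcal F_0,\mathcal F_1$ on $M$ give homotopic epimorphisms, connected by a family $(F_t,f_t):TM\to \nu\Omega$, $t\in [0,1]$. Regarding this family as an epimorphism on the open manifold $M\times\R$ (after adjoining a trivial summand absorbing the $\R$-direction) and running the same construction in parametric form produces a foliation on $M\times\R$ restricting, at the two ends, to foliations integrably homotopic to $\mathcal F_0$ and $\mathcal F_1$. Well-definedness of the forward map up to integrable homotopy follows by applying the same reasoning to any integrable homotopy between two foliations on $M$.

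The principal obstacle is the compatibility between the abstract universal data $\nu\Omega$ and the geometric normal bundle $\nu\mathcal F_N$ produced by Theorem~\ref{HL}: one has to arrange that $F$ and its extension $\bar F$ really cover a map into $N$ agreeing with $s$ up to homotopy, so that Phillips' transversality theorem can genuinely be applied to the geometric data rather than merely to the formal data in $B\Gamma_q$. The cofibration property of $s$ supplied by Theorem~\ref{HL} is the key input making this extension possible, and the parametric version of Phillips' theorem is what ultimately lets injectivity reduce to the very same construction carried out over the open manifold $M\times \R$.
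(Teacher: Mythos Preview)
Your proposal is correct and follows essentially the same approach the paper indicates: the paper does not give its own detailed proof of this classical theorem but simply states that it follows by combining Theorem~\ref{HL} with Phillips' transversality theorem (\cite{phillips1}), referring to \cite{francis} for details. Your outline---classifying a foliation via Theorem~\ref{CMT}, resolving a $\Gamma_q$-structure into an honest foliation on a larger manifold $N$ via Theorem~\ref{HL}, using the cofibration property of $s$ to convert the formal epimorphism into one over $N$, and then invoking Phillips' theorem (in parametric form for injectivity)---is precisely Haefliger's argument.
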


\section{Classification of contact foliations}

Throughout this section $M$ is a contact manifold with a contact form $\alpha$. As before $\xi$ will denote the associated contact structure $\ker\alpha$ and $d'\alpha=d\alpha|_{\xi}$. Let $Fol_\alpha^{2q}(M)$ denote the space of contact foliations on $M$ of codimension $2q$ subordinate to $\alpha$ (Definition~\ref{subordinate_contact_foliation}).
Let $\mathcal E_{\alpha}(TM,\nu\Omega_{2q})$ be the space of all vector bundle epimorphisms $F:TM\to \nu \Omega_{2q}$ such that $\ker F$ is transversal to $\ker\alpha$ and $\ker\alpha\cap \ker F$ is a symplectic subbundle of $(\ker\alpha,d'\alpha)$.

If $\mathcal F\in Fol^{2q}(M)$ and $f:M\to B\Gamma_{2q}$ is a classifying map of $\mathcal F$, then  $f^*\Omega_{2q}= \mathcal F$ as $\Gamma_{2q}$-structure. We can define a vector bundle epimorphisms $TM\to \nu\Omega_{2q}$ by the following diagram (see \cite{haefliger1})
\begin{equation}
 \xymatrix@=2pc@R=2pc{
TM \ar@{->}[r]^-{\pi_{\mathcal F}}\ar@{->}[rd] & \nu \mathcal{F}\cong f^*(\nu \Omega_{2q}) \ar@{->}[r]^-{\bar{f}}\ar@{->}[d] & \nu \Omega_{2q} \ar@{->}[d]\\
& M \ar@{->}[r]_-{f} & B\Gamma_{2q}
}\label{F:H(foliation)}
\end{equation}
where $\pi_{\mathcal F}:TM\to \nu(\mathcal F)$ is the projection map onto the normal bundle and $(\bar{f},f)$ represents a pull-back diagram. Note that the kernel of this morphism is $T\mathcal F$ and therefore,
if $\mathcal F\in Fol^{2q}_\alpha(M)$, then  $\bar{f}\circ \pi_{\mathcal F} \in \mathcal E_\alpha(TM,\nu\Omega_{2q})$ (see Remark~\ref{R:tangent_contact_foliation}).
However, the morphism $\bar{f}\circ \pi_{\mathcal F}$ is defined uniquely only up to homotopy. Thus, there is a function
\[H'_\alpha:Fol^{2q}_\alpha(M)\to \pi_0(\mathcal E_\alpha(TM,\nu\Omega_{2q})).\]
\begin{definition} {\em Two contact foliations $\mathcal F_0$ and $\mathcal F_1$ on $(M,\alpha)$ are said to be \emph{integrably homotopic relative to $\alpha$} if there exists a foliation $\tilde{\mathcal F}$ on $(M\times\I,\alpha\oplus 0)$ such that the following conditions are satisfied:
\begin{enumerate}
\item $\tilde{\mathcal F}$ is transversal to the trivial foliation of $M\times\I$ by the leaves $M\times\{t\}$, $t\in \I$;
\item the foliation $\mathcal F_t$ on $M$ induced by the canonical injective map $i_t:M\to M\times\I$ (given by $x\mapsto (x,t)$) is a contact foliation subordinate to $\alpha$ for each $t\in\I$;
\item the induced foliations on  $M\times\{0\}$ and $M\times\{1\}$ coincide with $\mathcal F_0$ and $\mathcal F_1$ respectively,\end{enumerate}
Here $\alpha\oplus 0$ denotes the pull-back of $\alpha$ by the projection map $p_1:M\times\R\to M$.}\end{definition}

Let $\pi_0(Fol^{2q}_{\alpha}(M))$ denote the space of integrable homotopy classes of contact foliations on $(M,\alpha)$. Define
\[H_\alpha:\pi_0(Fol^{2q}_\alpha(M))\to \pi_0(\mathcal E_\alpha(TM,\nu\Omega_{2q}))\]
by $H_{\alpha}([\mathcal{F}])=H_\alpha'(\mathcal F)$, where $[\mathcal F]$ denotes the integrable homotopy class of $\mathcal F$ relative to $\alpha$.
To see that $H_\alpha$ is well-defined, let $\tilde{\mathcal F}$ be an integrable homotopy relative to $\alpha$ between two contact foliations $\mathcal F_0$ and $\mathcal F_1$.
If $F:M\times[0,1]\to B\Gamma_{2q}$ is a classifying map of $\tilde{\mathcal F}$ then we have a diagram similar to (\ref{F:H(foliation)}) given as follows:
\[
 \xymatrix@=2pc@R=2pc{
T(M\times[0,1]) \ar@{->}[r]^-{\bar{\pi}}\ar@{->}[rd] & \nu \tilde{F} \ar@{->}[r]^-{\bar{F}}\ar@{->}[d] & \nu \Omega_{2q} \ar@{->}[d]\\
& M\times [0,1] \ar@{->}[r]_-{F} & B\Gamma_{2q}
}
\]
Let $i_t:M\to M\times\{t\}\hookrightarrow M\times\R$ denote the canonical injective map of $M$ into $M\times\{t\}$ and $f_t:M\to B\Gamma_{2q}$ be defined as $f_t(x)=F(x,t)$ for $(x,t)\in M\times[0,1]$. Since $\bar{F}\circ \bar{\pi}\circ di_t:TM\to \nu(\Omega_{2q})$ represents the homotopy class $H'_\alpha(f_t^*\Omega_{2q})$ we conclude that  $H'_\alpha(\mathcal F_0)= H'_\alpha(\mathcal F_1)$.
This proves that $H_\alpha$ is well-defined. We now state the main result of this article.
\begin{theorem}
\label{haefliger_contact}
If $M$ is open then $H_\alpha:\pi_0(Fol^{2q}_{\alpha}(M)) \longrightarrow \pi_0(\mathcal E_{\alpha}(TM,\nu\Omega_{2q}))$ is bijective.
\end{theorem}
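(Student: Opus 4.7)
The plan is to mimic Haefliger's classical classification argument, replacing Phillips' transversality theorem by its contact analogue (Theorem~\ref{T:contact-transverse}). Two ingredients do all the work: Haefliger's embedding theorem (Theorem~\ref{HL}), which represents any $\Gamma_{2q}$-structure on $M$ as the pullback of a genuine codimension-$2q$ foliation by a closed embedding into a larger manifold, and the parametric $h$-principle of Theorem~\ref{T:contact-transverse}, which inverts this pullback operation up to integrable homotopy in the contact setting.

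For surjectivity of $H_\alpha$, start with $F\in \mathcal{E}_\alpha(TM,\nu\Omega_{2q})$ and let $f:M\to B\Gamma_{2q}$ be the classifying map of its base, so that $F$ factors (up to bundle homotopy) through the pullback identification $\nu(f^*\Omega_{2q})\cong f^*\nu\Omega_{2q}$. Apply Theorem~\ref{HL} to the $\Gamma_{2q}$-structure $\Sigma=f^*\Omega_{2q}$ to obtain a closed embedding $s:M\hookrightarrow N$ and a codimension-$2q$ foliation $\mathcal F_N$ on $N$ with $s^*\mathcal F_N=\Sigma$; then $\nu\Sigma\cong s^*\nu\mathcal F_N$. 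Choose any splitting $TN\cong T\mathcal F_N\oplus\nu\mathcal F_N$ and lift $F$ to a bundle morphism $\tilde F:TM\to TN$ over $s$ with $\pi\circ\tilde F$ equal to the given epimorphism to $\nu\mathcal F_N$. The conditions defining $\mathcal{E}_\alpha$ are preserved, so $\tilde F\in \mathcal E_\alpha(TM,\nu\mathcal F_N)$. Since $M$ is open and $2q<\dim M$, Theorem~\ref{T:contact-transverse} provides $g\in Tr_\alpha(M,\mathcal F_N)$ with $\pi\circ dg$ homotopic to $\pi\circ\tilde F$ in $\mathcal E_\alpha(TM,\nu\mathcal F_N)$. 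The foliation $\mathcal F=g^{-1}\mathcal F_N$ is a contact foliation subordinate to $\alpha$ whose class satisfies $H_\alpha([\mathcal F])=[F]$, because composing with the classifying map $p:N\to B\Gamma_{2q}$ of $\mathcal F_N$ recovers the original bundle-epi data up to homotopy.

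For injectivity, suppose $H_\alpha([\mathcal F_0])=H_\alpha([\mathcal F_1])$ and pick a homotopy of bundle epimorphisms between the corresponding representatives, giving a classifying homotopy $F:M\times I\to B\Gamma_{2q}$. Apply Theorem~\ref{HL} once to the $\Gamma_{2q}$-structure $F^*\Omega_{2q}$ on $M\times I$ to obtain a single closed embedding $\tilde s:M\times I\hookrightarrow\tilde N$ and a codimension-$2q$ foliation $\mathcal F_{\tilde N}$ on $\tilde N$ pulling back to $F^*\Omega_{2q}$. Using the surjectivity step at each endpoint, replace $\tilde s|_{M\times\{i\}}$ by transverse maps $h_i:M\to\tilde N$ with $h_i^{-1}\mathcal F_{\tilde N}$ integrably homotopic to $\mathcal F_i$ rel $\alpha$, and such that $\pi\circ dh_0$ and $\pi\circ dh_1$ lie in the same class of $\pi_0\mathcal E_\alpha(TM,\nu\mathcal F_{\tilde N})$ (the path in $\mathcal E_\alpha(TM,\nu\Omega_{2q})$ lifts through $\tilde s$). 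The injectivity on $\pi_0$ contained in the weak homotopy equivalence of Theorem~\ref{T:contact-transverse} produces a path $\{h_t\}$ in $Tr_\alpha(M,\mathcal F_{\tilde N})$. The assembled map $H:M\times I\to\tilde N$, $H(x,t)=h_t(x)$, is transversal to $\mathcal F_{\tilde N}$, and $H^{-1}\mathcal F_{\tilde N}$ is a foliation on $M\times I$ transverse to the $M\times\{t\}$ slices inducing contact foliations subordinate to $\alpha$ on each slice; concatenating with the endpoint integrable homotopies yields the required integrable homotopy rel $\alpha$ from $\mathcal F_0$ to $\mathcal F_1$.

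The main obstacle is the bookkeeping in the injectivity step: one must arrange that the perturbed endpoints $h_0,h_1$ represent specifically the prescribed contact foliations $\mathcal F_0,\mathcal F_1$ up to integrable homotopy rel $\alpha$, rather than merely some contact foliations in the same class of $\pi_0\mathcal E_\alpha(TM,\nu\Omega_{2q})$. This requires exploiting the cofibration property of Haefliger's embedding in Theorem~\ref{HL} together with the relative form of Theorem~\ref{T:contact-transverse} (applying the $h$-principle on $M\times\{0,1\}$ first, then extending), so that the perturbation of $\tilde s$ near the boundary slices can be prescribed to match the chosen representatives of $\mathcal F_0$ and $\mathcal F_1$ without disturbing the comparison of bundle-epi data along the interior.
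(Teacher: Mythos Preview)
Your proposal is correct and follows precisely the strategy the paper sketches: reduce to Theorem~\ref{T:contact-transverse} via Haefliger's representation theorem (Theorem~\ref{HL}), with Lemma~\ref{L:haefliger} providing the commutative square that shows $H_\alpha\circ P = G_*\circ\pi_0(\pi\circ d)$ and hence that the surjectivity/injectivity of $H_\alpha$ follows from that of $\pi_0(\pi\circ d)$. The endpoint bookkeeping you flag in the last paragraph is exactly the standard technical point in Haefliger's argument (handled via the cofibration clause in Theorem~\ref{HL} and the parametric/relative form of the $h$-principle already built into Theorem~\ref{CT}), and the paper itself simply defers to \cite{francis} for these details.
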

We first prove a lemma.
\begin{lemma}Let $N$ be a smooth manifold with a foliation $\mathcal F_N$ of codimension $2q$. If $g:N\to B\Gamma_{2q}$ classifies $\mathcal F_N$ then we have the commutative diagram as follows:
\begin{equation}
 \xymatrix@=2pc@R=2pc{
\pi_0(Tr_{\alpha}(M,\mathcal{F}_N))\ar@{->}[r]^-{P}\ar@{->}[d]_-{\cong}^-{\pi_0(\pi \circ d)} & \pi_0(Fol^{2q}_{\alpha}(M))\ar@{->}[d]^-{H_{\alpha}}\\
\pi_0(\mathcal E_{\alpha}(TM,\nu \mathcal{F}_N))\ar@{->}[r]_{G_*} & \pi_0(\mathcal E_{\alpha}(TM,\nu\Omega_{2q}))
}\label{Figure:Haefliger}
\end{equation}
where the left vertical arrow is the isomorphism defined by Theorem~\ref{T:contact-transverse}, $P$ is induced by a map which takes an $f\in Tr_\alpha(M,\mathcal F_N)$ onto the inverse foliation $f^{-1}\mathcal F_N$ and $G_*$ is induced by the bundle homomorphism $G:\nu\mathcal F_N\to \nu\Omega_{2q}$ covering $g$.\label{L:haefliger}
\end{lemma}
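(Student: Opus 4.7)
The plan is to verify commutativity of diagram~(\ref{Figure:Haefliger}) by a direct chase: I fix a representative $f \in Tr_\alpha(M,\mathcal F_N)$ of a class $[f]$ and show that the two bundle epimorphisms $TM \to \nu\Omega_{2q}$ obtained by traversing the two paths are already equal on the nose (not merely homotopic). The essential inputs are the naturality of Haefliger's classifying construction under pullbacks of $\Gamma_{2q}$-structures and the identification $\nu(f^{-1}\mathcal F_N) \cong f^*\nu\mathcal F_N$.

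First I would observe that, as a $\Gamma_{2q}$-structure on $M$, the inverse foliation $f^{-1}\mathcal F_N$ is by definition $f^*\mathcal F_N$; hence by Theorem~\ref{CMT}(3) it is classified up to homotopy by the composite $g \circ f : M \to B\Gamma_{2q}$. Applying diagram~(\ref{F:H(foliation)}) to $f^{-1}\mathcal F_N$ and the classifying map $g\circ f$, the class $H_\alpha(P([f])) = H_\alpha([f^{-1}\mathcal F_N])$ is represented by the composite
\[
TM \stackrel{\pi_{f^{-1}\mathcal F_N}}{\longrightarrow} \nu(f^{-1}\mathcal F_N) \stackrel{\overline{g\circ f}}{\longrightarrow} \nu\Omega_{2q},
\]
where $\overline{g\circ f}$ is the canonical bundle map covering $g\circ f$ coming from the pullback identification $\nu(f^{-1}\mathcal F_N) \cong (g\circ f)^*\nu\Omega_{2q}$.

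Next I would factor this pullback map through $\nu\mathcal F_N$. By functoriality, $(g\circ f)^*\nu\Omega_{2q} \cong f^*g^*\nu\Omega_{2q} \cong f^*\nu\mathcal F_N$, and the canonical pullback bundle map decomposes as
\[
\nu(f^{-1}\mathcal F_N) \stackrel{\tilde f}{\longrightarrow} \nu\mathcal F_N \stackrel{G}{\longrightarrow} \nu\Omega_{2q},
\]
where $\tilde f$ covers $f$. The crucial identity $\tilde f \circ \pi_{f^{-1}\mathcal F_N} = \pi \circ df$ is immediate from the definition of the inverse foliation: $T(f^{-1}\mathcal F_N) = (df)^{-1}(T\mathcal F_N)$, so $df$ descends to the quotients to give a map $TM/T(f^{-1}\mathcal F_N) \to TN/T\mathcal F_N$ covering $f$, and by inspection this descended map is precisely $\tilde f$. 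Combining the two steps yields
\[
H_\alpha(P([f])) = [G \circ \tilde f \circ \pi_{f^{-1}\mathcal F_N}] = [G \circ \pi \circ df] = G_*\bigl(\pi_0(\pi\circ d)([f])\bigr),
\]
which is the claimed commutativity.

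I do not expect any serious geometric obstacle here; the content of the lemma is essentially the naturality of the Haefliger correspondence applied to the foliation $\mathcal F_N$ and the map $f$. The only point requiring care is bookkeeping at the level of $\pi_0$: the classifying map $g \circ f$ and the accompanying bundle map $\overline{g\circ f}$ are pinned down only up to homotopy, but Theorem~\ref{CMT} ensures that any such ambiguity produces a homotopy through bundle epimorphisms which, by construction of $\mathcal R$ and the transversality/symplecticity conditions inherited from $df$, stays inside $\mathcal E_\alpha(TM,\nu\Omega_{2q})$, so the resulting class in $\pi_0(\mathcal E_\alpha(TM,\nu\Omega_{2q}))$ is unambiguous.
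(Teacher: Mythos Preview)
Your commutativity argument is correct and is essentially the paper's own: both identify $g\circ f$ as a classifying map for $f^{-1}\mathcal F_N$, introduce the induced map $\tilde f=\widetilde{df}:\nu(f^{-1}\mathcal F_N)\to\nu\mathcal F_N$, verify $\tilde f\circ\pi_{f^{-1}\mathcal F_N}=\pi\circ df$, and conclude $H_\alpha([f^{-1}\mathcal F_N])=[G\circ(\pi\circ df)]$. One omission: the paper first checks that $P$ is well defined on $\pi_0$, i.e.\ that a path $f_t$ in $Tr_\alpha(M,\mathcal F_N)$ produces an \emph{integrable homotopy relative to $\alpha$} between $f_0^{-1}\mathcal F_N$ and $f_1^{-1}\mathcal F_N$ (via the foliation $F^{-1}\mathcal F_N$ on $M\times\I$ with $F(x,t)=f_t(x)$); you should include this, since the lemma statement only describes $P$ at the level of representatives.
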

\begin{proof} We shall first show that the horizontal arrows in (\ref{Figure:Haefliger}) are well defined.
If $f\in Tr_{\alpha}(M,\mathcal{F}_N)$ then the inverse foliation $f^*\mathcal F_N$ belongs to $Fol^{2q}_\alpha(M)$. Furthermore, if $f_t$ is a homotopy in $Tr_{\alpha}(M,\mathcal{F}_N)$, then the map $F:M\times\I\to N$ defined by $F(x,t)=f_t(x)$ is clearly transversal to $\mathcal F_N$ and so $\tilde{\mathcal F}=F^*\mathcal F_N$ is a foliation on $M\times\I$.
The restriction of $\tilde{\mathcal F}$ to $M\times\{t\}$ is the same as the foliation $f^*_t(\mathcal F_N)$, which is a contact foliation subordinate to $\alpha$. Hence, we get a map \[\pi_0(Tr_{\alpha}(M,\mathcal{F}_N))\stackrel{P}\longrightarrow \pi_0(Fol^{2q}_{\alpha}(M))\] defined by \[[f]\longmapsto [f^{-1}\mathcal{F}_N]\]
where $[f^{-1}\mathcal{F}_N]$ denotes the integrable homotopy class of the foliation $f^{-1}\mathcal{F}_N$. On the other hand, since $g:N\to B\Gamma_{2q}$ classifies the foliation $\mathcal F_N$, there is a vector bundle homomorphism $G:\nu\mathcal F_N\to \nu\Omega_{2q}$ covering $g$. This induces a map
\[G_*: \pi_0(\mathcal E_\alpha(TM,\nu(\mathcal F_N)))\to  \pi_0(\mathcal E_\alpha(TM,\nu\Omega_{2q}))\] which takes an element $[F]\in \mathcal E_\alpha(TM,\nu(\mathcal F_N))$ onto $[G\circ F]$.
We now prove the commutativity of (\ref{Figure:Haefliger}). Note that if $f\in Tr_{\alpha}(M,\mathcal{F}_N))$ then $g\circ f:M\to B\Gamma_{2q}$ classifies the foliation $f^*\mathcal F_N$. Let $\widetilde{df}:\nu(f^*\mathcal F_N)\to \nu(\mathcal F_N)$ be the unique map making the following diagram commutative:
 \[
 \xymatrix@=2pc@R=2pc{
TM\ar@{->}[r]^-{df}\ar@{->}[d]_-{\pi_M} & TN\ar@{->}[d]^-{\pi_N}\\
\nu (f^*\mathcal{F}_N)\ar@{->}[r]_{\widetilde{df}} & \nu(\mathcal F_N)
}
\]
where $\pi_M:TM\to\nu(f^*\mathcal F_N)$ is the projection map onto the normal bundle of $f^*\mathcal F_N$.
Observe that $G\circ\widetilde{df}:\nu(f^*\mathcal F_N)\to \nu(\Omega_{2q})$ covers the map $g\circ f$ and $(G\circ \widetilde{df},g\circ f)$ is a pullback diagram. Therefore, we have
\[H_\alpha([f^*\mathcal F_N])=[(G\circ\widetilde{df})\circ \pi_M]=[G\circ(\pi\circ df)].\]
This proves the commutativity of (\ref{Figure:Haefliger}).\end{proof}

{\em Proof of Theorem ~\ref{haefliger_contact}}. The proof is exactly similar to that of Haefliger's classification theorem. The main idea is to reduce the classification to Theorem~\ref{T:contact-transverse} by using Theorem~\ref{HL} and Lemma~\ref{L:haefliger}. We refer to \cite{francis} for a detailed proof of Haefliger's theorem.\qed\\

\begin{theorem}Let $(M,\alpha)$ be an open contact manifold and let $\tau:M\to BU(n)$ be a map classifying the symplectic vector bundle $\xi=\ker\alpha$. Then there is a bijection between the elements of $\pi_0(\mathcal E_{\alpha}(TM,\nu\Omega))$ and the homotopy classes of triples $(f,f_0,f_1)$, where $f_0:M\to BU(q)$, $f_1:M\to BU(n-q)$ and $f:M\to B\Gamma_{2q}$ such that
\begin{enumerate}\item $(f_0,f_1)$ is homotopic to $\tau$ in $BU(n)$ and
\item $Bd\circ f$ is homotopic to $Bi\circ f_0$ in $BGL_{2q}$.\end{enumerate}
In other words the following diagrams are homotopy commutative:\\
\[\begin{array}{ccc}
\xymatrix@=2pc@R=2pc{
& &\ \ B\Gamma(2q)\ar@{->}[d]^{Bd}\\
M \ar@{->}[r]_-{f_0}\ar@{-->}[urr]^{f} & BU(q)\ar@{->}[r]_{Bi} & BGL(2q)
}
& \hspace{1cm}&
\xymatrix@=2pc@R=2pc{
&\ \ BU(q)\times BU(n-q)\ar@{->}[d]^{\oplus}\\
M \ar@{->}[r]_-{\tau}\ar@{-->}[ur]^{(f_0,f_1)}& BU(n)
}\end{array}\]
\end{theorem}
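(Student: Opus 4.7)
The plan is to construct explicit mutually inverse maps $\Phi$ and $\Psi$ between $\pi_0(\mathcal E_\alpha(TM,\nu\Omega_{2q}))$ and the set of homotopy classes of triples $(f,f_0,f_1)$ satisfying conditions (1) and (2). Given $F\in \mathcal E_\alpha(TM,\nu\Omega_{2q})$, set $f:=p\circ F:M\to B\Gamma_{2q}$, where $p:\nu\Omega_{2q}\to B\Gamma_{2q}$ is the bundle projection, so that $F$ covers $f$. Put $V_1:=\ker F\cap \xi$, a rank $2(n-q)$ symplectic subbundle of $(\xi,d'\alpha)$, and let $V_0\subset \xi$ be its symplectic complement, so $\xi=V_0\oplus V_1$ as symplectic bundles. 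Choose classifying maps $f_0:M\to BU(q)$ for $V_0$ and $f_1:M\to BU(n-q)$ for $V_1$, and define $\Phi([F]):=[(f,f_0,f_1)]$. Condition (1) is immediate from the symplectic splitting $\xi=V_0\oplus V_1$. For condition (2), note that $V_0$ is complementary to $\ker F$ inside $TM$: transversality gives $\ker F+\xi=TM$, while $\ker F\cap V_0\subseteq \ker F\cap \xi\cap V_0=V_1\cap V_0=0$. Hence $F|_{V_0}:V_0\to f^*\nu\Omega_{2q}$ is a real bundle isomorphism, which provides the required homotopy $Bi\circ f_0\simeq Bd\circ f$ in $BGL_{2q}$.

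For the inverse $\Psi$, start from a triple $(f,f_0,f_1)$ satisfying (1) and (2). Condition (1) yields a symplectic decomposition $\xi=V_0\oplus V_1$ with $V_0,V_1$ classified by $f_0,f_1$, and condition (2) yields a real bundle isomorphism $\psi:V_0\xrightarrow{\cong} f^*\nu\Omega_{2q}$. Using the Reeb splitting $TM=V_0\oplus V_1\oplus \langle R_\alpha\rangle$, define $F:TM\to \nu\Omega_{2q}$ to equal the composition of $\psi$ with the canonical bundle map $f^*\nu\Omega_{2q}\to \nu\Omega_{2q}$ on $V_0$, and to vanish on $V_1\oplus \langle R_\alpha\rangle$. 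Then $F$ is an epimorphism covering $f$, $\ker F=V_1\oplus \langle R_\alpha\rangle$ is transverse to $\xi$, and $\ker F\cap \xi=V_1$ is a symplectic subbundle, so $F\in \mathcal E_\alpha(TM,\nu\Omega_{2q})$. Set $\Psi([(f,f_0,f_1)]):=[F]$.

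The final step is to check that $\Phi$ and $\Psi$ are mutually inverse on $\pi_0$. The identity $\Phi\circ \Psi=\mathrm{id}$ is immediate since the extracted $V_1=\ker F\cap \xi$ and $V_0=V_1^\perp$ coincide with the data used to build $F$. For $\Psi\circ \Phi=\mathrm{id}$, starting from $F\in \mathcal E_\alpha$ and applying $\Psi$ to its triple produces a map $F'$ which agrees with $F$ on $V_0$ and vanishes on $V_1$, but vanishes on $\langle R_\alpha\rangle$ rather than on the original line complement $L$ to $V_1$ inside $\ker F$. A linear interpolation of line complements $L_t$ between $L$ and $\langle R_\alpha\rangle$, all transverse to $\xi$, yields a continuous path in $\mathcal E_\alpha$ from $F$ to $F'$. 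The main obstacle is verifying well-definedness of $\Psi$ on homotopy classes: different choices of symplectic splitting realizing a fixed $(f_0,f_1)$ and of isomorphism $\psi$ realizing condition (2) must produce homotopic $F$'s, and a homotopy of triples must lift to a homotopy of $F$'s. Both reduce to the connectedness (in fact, contractibility) of the relevant spaces of choices: the space of symplectic complements of $V_1$ in $\xi$ is a contractible symplectic Grassmannian, and the space of real bundle isomorphisms $V_0\cong f^*\nu\Omega_{2q}$ lifting a given homotopy class is the space of sections of a principal $GL_{2q}(\mathbb{R})$-bundle with connected fibre.
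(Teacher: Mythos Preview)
Your approach is essentially the paper's: from $(F,f)$ extract the symplectic splitting $\xi=V_0\oplus V_1$ with $V_1=\ker F\cap\xi$ and $V_0=V_1^{d'\alpha}$, classify each summand, and read off condition~(2) from the isomorphism $F|_{V_0}:V_0\xrightarrow{\cong} f^*\nu\Omega_{2q}$; in the other direction, rebuild $F$ by projecting $\xi$ onto $V_0$ and extending by zero on $R_\alpha$. The paper's proof is actually terser than yours---it simply describes the two constructions and does not attempt to check well-definedness or that the correspondences are mutually inverse---so your discussion of $\Psi\circ\Phi$ and $\Phi\circ\Psi$ and of the dependence on choices goes beyond what the paper provides. (Incidentally, in the paper's forward direction the roles of $f_0$ and $f_1$ are accidentally swapped; your assignment, with $f_0:M\to BU(q)$ classifying the rank-$2q$ complement $V_0$, is the correct one and matches the paper's converse direction.)

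One point to revisit: your final sentence asserts that the space of real bundle isomorphisms $V_0\cong f^*\nu\Omega_{2q}$ is ``the space of sections of a principal $GL_{2q}(\mathbb R)$-bundle with connected fibre.'' But $GL_{2q}(\mathbb R)$ has two components, so this space of isomorphisms need not be connected, and two choices of $\psi$ in different components could a priori yield non-homotopic $F$'s. The resolution is that condition~(2) is not just the assertion that a homotopy $Bi\circ f_0\simeq Bd\circ f$ \emph{exists}, but that a choice of such homotopy is part of the data of the triple (and ``homotopy of triples'' means homotopy of the whole package). With that reading, the homotopy in~(2) singles out a path-component of isomorphisms $\psi$, and your argument goes through. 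The paper is informal on exactly this point, so your more careful treatment is an improvement once this is clarified.
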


\begin{proof}
An element $(F,f)\in \mathcal E_{\alpha}(TM,\nu\Omega)$ defines a (symplectic) splitting of the bundle $\xi$ as
\[\xi \cong (\ker F\cap \xi)\oplus (\ker F\cap \xi)^{d'\alpha}\]
since $\ker F\cap \xi$ is a symplectic subbundle of $\xi$. Let $F'$ denote the restriction of $F$ to $(\ker F\cap \xi)^{d'\alpha}$. It is easy to see that $(F',f):(\ker F\cap \xi)^{d'\alpha}\to \nu(\Omega)$ is a vector bundle map which is fibrewise isomorphism. If $f_0:M\to BU(q)$ and $f_1:M\to BU(n-q)$ are continuous maps classifying the vector bundles $\ker F\cap \xi$ and $(\ker F\cap \xi)^{d'\alpha}$ respectively, then the classifying map $\tau$ of $\xi$ must be homotopic to $(f_0,f_1):M\to BU(q)\times BU(n-q)$ in $BU(n)$ (Recall that the isomorphism classes of Symplectic vector bundles are classified by homotopy classes of continuous maps into $BU$ \cite{husemoller}). Furthermore, note that $(\ker F\cap \xi)^{d'\alpha}\cong f^*(\nu\Omega)=f^*(Bd^*EGL_{2q}(\R))$; therefore, $Bd\circ f$ is homotopic to $f_0$ in $BGL(2q)$.

Conversely, take a triple $(f,f_0,f_1)$ such that
\[Bd\circ f\sim Bi\circ f_0 \text{ and } (f_0,f_1)\sim \tau.\]
Then $\xi$ has a symplectic splitting given by $f_0^*EU(q)\oplus f_1^*EU(n-q)$. Further, since $Bd\circ f\sim Bi\circ f_0$, we have $f_0^*EU(q)\cong f^*\nu(\Omega)$. Hence there is an epimorphism $F:\xi\stackrel{p_2}{\longrightarrow} f_0^*EU(q) \cong f^*\nu(\Omega)$ whose kernel $f_1^*EU(n-q)$ is a symplectic subbundle of $\xi$. Finally, $F$ can be extended to an element of $\mathcal E_\alpha(TM,\nu\Omega)$ by defining its value on $R_\alpha$ equal to zero.\end{proof}

\begin{definition}{\em Let $N$ be a contact submanifold of $(M,\alpha)$ such that $T_xN$ is transversal to $\xi_x$ for all $x\in N$. Then $TN\cap \xi|_N$ is a symplectic subbundle of $\xi$. The symplectic complement of $TN\cap \xi|_N$ with respect to $d'\alpha$ will be called \emph{the normal bundle of the contact submanifold $N$}.}
\end{definition}
The following result is a direct consequence of the above classification theorem.
\begin{corollary} Let $B$ be a symplectic subbundle of $\xi$ with a classifying map $g:M\to BU(q)$. The integrable homotopy classes of contact foliations on $M$ with their normal bundles isomorphic to $B$ are in one-one correspondence with the homotopy classes of lifts of $Bi\circ g$ in $B\Gamma_{2q}$.
\end{corollary}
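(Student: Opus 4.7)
The plan is to combine Theorem~\ref{haefliger_contact} with the preceding classification of $\pi_0(\mathcal E_\alpha(TM,\nu\Omega))$ and then impose the normal-bundle constraint. By Theorem~\ref{haefliger_contact}, a contact foliation $\mathcal F$ subordinate to $\alpha$ corresponds, via diagram (\ref{F:H(foliation)}), to an epimorphism $F=\bar f\circ \pi_{\mathcal F}\in\mathcal E_\alpha(TM,\nu\Omega)$ with $\ker F = T\mathcal F$. Consequently, the normal bundle of the contact foliation, namely $(T\mathcal F\cap \xi)^{d'\alpha}$, is precisely the rank-$2q$ symplectic subbundle $(\ker F\cap \xi)^{d'\alpha}$ appearing in the previous theorem.

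Applying that previous theorem, elements of $\pi_0(\mathcal E_\alpha(TM,\nu\Omega))$ correspond to homotopy classes of triples $(f,f_0,f_1)$ with $f:M\to B\Gamma_{2q}$, $f_0:M\to BU(q)$ classifying the rank-$2q$ normal bundle $(\ker F\cap \xi)^{d'\alpha}$, $f_1:M\to BU(n-q)$ classifying its symplectic complement $\ker F\cap \xi$, and subject to $(f_0,f_1)\simeq \tau$ and $Bd\circ f\simeq Bi\circ f_0$. Requiring the normal bundle of $\mathcal F$ to be isomorphic to $B$ is exactly requiring $f_0\simeq g$. Once $f_0$ is fixed in the homotopy class of $g$, the relation $(f_0,f_1)\simeq \tau$ pins down the homotopy class of $f_1$, since $f_1$ must classify $\xi/B\cong B^{d'\alpha}$, whose isomorphism class is completely determined by $\xi$ and $B$.

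With $f_0$ and $f_1$ thus determined up to homotopy, the only remaining datum is a map $f:M\to B\Gamma_{2q}$ satisfying $Bd\circ f\simeq Bi\circ g$, which is by definition a homotopy lift of $Bi\circ g:M\to BGL_{2q}$ through $Bd:B\Gamma_{2q}\to BGL_{2q}$. The residual bookkeeping that I expect to be the main obstacle is to verify that the equivalence relation on triples $(f,f_0,f_1)$ restricts correctly to the standard equivalence on lifts — that is, that a homotopy of triples with $f_0$ and $f_1$ held inside their homotopy classes translates to a homotopy of $f$ through maps still satisfying $Bd\circ f\simeq Bi\circ g$, and conversely. This follows from the naturality in the preceding classification, because fixing the homotopy classes of $f_0$ and $f_1$ leaves their representatives free to be moved within those classes, and any such move can be absorbed into the compatibility homotopy $Bd\circ f\simeq Bi\circ f_0$. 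Granting this routine verification, the bijection claimed in the corollary is established.
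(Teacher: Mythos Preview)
Your argument is correct and is exactly the unpacking the paper intends when it says the corollary is ``a direct consequence of the above classification theorem.'' You correctly identify the normal bundle of $\mathcal F$ with $(\ker F\cap\xi)^{d'\alpha}$, fix $f_0\simeq g$ accordingly, observe that $f_1$ is then forced (it must classify the symplectic complement $B^{d'\alpha}\cong\xi/B$), and are left with the lifting condition $Bd\circ f\simeq Bi\circ g$; the residual check that homotopy of triples with $f_0,f_1$ pinned reduces to homotopy of lifts is indeed routine.
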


We end this article with an example to show that a contact foliation on a contact manifold need not be transversally symplectic, even if its normal bundle is a symplectic vector bundle.
\begin{definition}{\em (\cite{haefliger1}) A codimension ${2q}$-foliation $\mathcal F$ on a manifold $M$ is said to be \emph{transverse symplectic} if $\mathcal F$ can be represented by Haefliger cocycles which take values in the groupoid of local symplectomorphisms of $(\R^{2q},\omega_0)$.}
\end{definition}
Thus the normal bundle of a transversally symplectic foliation has a symplectic structure. It can be shown that if $\mathcal F$ is transversally symplectic then there exists a closed 2-form $\omega$ on $M$ such that $\omega^q$ is nowhere vanishing and $\ker\omega=T\mathcal F$.

\begin{example}
{\em
Let us consider a closed almost-symplectic manifold $V^{2n}$ which is not symplectic (e.g., we may take $V$ to be $\mathbb S^6$) and let $\omega_V$ be a non-degenerate 2-form on $V$ defining the almost symplectic structure. Set $M=V\times\mathbb{R}^3$ and let $\mathcal{F}$ be the foliation on $M$ defined by the fibres of the projection map $\pi:M\to V$. Thus the leaves are $\{x\}\times\mathbb{R}^3,\ x\in V$. Consider the standard contact form $\alpha=dz+x dy$ on the Euclidean space $\R^3$ and let $\tilde{\alpha}$ denote the pull-back of $\alpha$ by the projection map $p_2:M\to\R^3$. The 2-form $\beta=\omega_V\oplus d\alpha$ on $M$ is of maximum rank and it is easy to see that $\beta$ restricted to $\ker\tilde{\alpha}$ is non-degenerate. Therefore $(\tilde{\alpha},\beta)$ is an almost contact structure on $M$. Moreover, $\tilde{\alpha}\wedge \beta|_{T\mathcal{F}}$ is nowhere vanishing.

We claim that there exists a contact form $\eta$ on $M$ such that its restrictions to the leaves of $\mathcal F$ are contact.
Recall that there exists a surjective map \[(T^*M)^{(1)}\stackrel{D}{\rightarrow}\wedge^1T^*M \oplus \wedge^2T^*M\] such that $D\circ j^1(\alpha)=(\alpha,d\alpha)$ for any 1-form $\alpha$ on $M$. Let
\[r:\wedge^1T^*M \oplus \wedge^2T^*M\rightarrow \wedge^1T^*\mathcal{F} \oplus \wedge^2T^*\mathcal{F}\] be the restriction map defined by the pull-back of forms and let $A\subset \Gamma(\wedge^1T^*M \oplus \wedge^2T^*M)$ be the set of all pairs $(\eta,\Omega)$ such that $\eta \wedge \Omega^{n+1}$ is nowhere vanishing and let $B\subset \Gamma(\wedge^1T^*\mathcal{F} \oplus\wedge^2T^*\mathcal{F})$ be the set of all pairs whose restriction on $T\mathcal{F}$ is nowhere vanishing. Now set $\mathcal{R}\subset (T^*M)^{(1)}$ as
\[\mathcal{R}=D^{-1}(A)\cap (r\circ D)^{-1}(B).\] Since both $A$ and $B$ are open so is $\mathcal{R}$. Now if we consider the fibration $M\stackrel{\pi}{\rightarrow}V$ then it is easy to see that the diffeotopies of $M$ preserving the fibers of $\pi$ sharply moves $V\times 0$ and $\mathcal{R}$ is invariant under the action of such
diffeotopies. So by Theorem~\ref{T:gromov-invariant} there exists a contact form $\eta$ on $Op(V\times 0)=V\times\mathbb{D}^3_{\varepsilon}$ for some $\varepsilon>0$, and $\eta$ restricted to each leaf of the foliation $\mathcal F$ is also contact. Now take a diffeomorphism $g:\mathbb{R}^3\rightarrow \mathbb{D}^3_{\varepsilon}$. Then  $\eta'=(id_V\times g)^*\eta$ is a contact form on $M$. Further, $\mathcal{F}$ is a contact foliation relative to $\eta'$ since $id_V\times g$ is foliation preserving.

But $\mathcal{F}$ can not be transversal symplectic because then there would exist a closed 2-form $\beta$ whose restriction to $\nu \mathcal{F}=\pi^*(TV)$ would be non-degenerate. This would imply that $V$ is a symplectic manifold contradicting our hypothesis.}

\end{example}

\section{Examples of contact foliations on contact manifolds}

The odd dimensional spheres $\mathbb S^{2n+1}$, $n\geq 1$, are examples of contact manifolds as described in Example~\ref{ex:contact}. We shall show that the open submanifolds of $\mathbb S^{n+1}$ obtained by deleting a lower dimensional sphere from it admits contact foliations.
We shall first interpret Corollary~\ref{T:contact_submersion} in terms of certain $2n$-frames in $M$, when the target manifold is an Euclidean space.
Recall from Section 2 that the tangent bundle $TM$ of a contact manifold $(M,\alpha)$ splits as $\ker\alpha\oplus\ker \,d\alpha$. Let $P:TM\to\ker\alpha$ be the projection morphism onto $\ker\alpha$ relative to this splitting. We shall denote the projection of a vector field $X$ on $M$ under $P$  by $\bar{X}$. For any smooth function $h:M\to \R$, $X_h$ will denote the contact Hamiltonian vector field defined as in the prelimiaries (see equations (\ref{contact_hamiltonian1})).
\begin{lemma} Let $(M,\alpha)$ be a contact manifold and $f:M\to \R^{2n}$ be a submersion with coordinate functions $f_1,f_2,\dots,f_{2n}$. Then the following statements are equivalent:
\begin{enumerate}\item[(C1)] $f$ is a contact submersion.
\item[(C2)] The restriction of $d\alpha$ to the bundle spanned by $X_{f_1},\dots,X_{f_{2n}}$ defines a symplectic structure.
\item[(C3)] The vector fields $\bar{X}_{f_1},\dots,\bar{X}_{f_{2n}}$ span a symplectic subbundle of $(\xi,d'\alpha)$.
\end{enumerate}\end{lemma}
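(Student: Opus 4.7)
My plan is to exploit the decomposition $X_{f_i} = f_i R_\alpha + \bar X_{f_i}$ coming from (\ref{contact_hamiltonian}), together with the defining property $i_{\bar X_{f_i}}d'\alpha = -df_i|_\xi$, which follows from (\ref{contact_hamiltonian1}) combined with $i_{R_\alpha}d\alpha=0$. Write $\xi=\ker\alpha$, $V=\ker df = T\mathcal F$, and let $W$ denote the span of $\bar X_{f_1},\dots,\bar X_{f_{2n}}$. Both equivalences will reduce to one basic linear-algebra identity in each fibre.

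For (C2)$\iff$(C3), the plan is a direct computation: since $i_{R_\alpha}d\alpha=0$, one has $d\alpha(X_{f_i},X_{f_j}) = d\alpha(\bar X_{f_i},\bar X_{f_j}) = d'\alpha(\bar X_{f_i},\bar X_{f_j})$, so the Gram matrices of $d\alpha$ on $\mathrm{span}\{X_{f_i}\}$ and of $d'\alpha$ on $\mathrm{span}\{\bar X_{f_i}\}$ coincide. Non-degeneracy of this common matrix is the content of both (C2) and (C3), and in particular forces both systems to be pointwise linearly independent so that the spans genuinely form rank-$2n$ subbundles.

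For (C1)$\iff$(C3), I would invoke Lemma~\ref{L:contact_submanifold} to rephrase (C1) as: $V\pitchfork \xi$ and $V\cap \xi$ is a symplectic subbundle of $(\xi,d'\alpha)$. The $d'\alpha$-raising isomorphism $\xi\to\xi^*$, $X\mapsto i_Xd'\alpha$, sends $\bar X_{f_i}$ to $-df_i|_\xi$, so $\mathrm{rank}\,W = \mathrm{rank}\{df_i|_\xi\}$; this rank equals $2n$ iff $df|_\xi$ is surjective, equivalently iff $V\pitchfork\xi$. Assuming this transversality, for $Y\in V\cap\xi$ one has $d'\alpha(\bar X_{f_i},Y) = -df_i(Y)=0$, so $W\subset (V\cap\xi)^{\perp d'\alpha}$, and a dimension count in $\xi$ (rank $2m$) gives equality. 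Since the $d'\alpha$-orthogonal complement of any subbundle of the symplectic vector bundle $(\xi,d'\alpha)$ is symplectic iff the subbundle is, $V\cap\xi$ is a symplectic subbundle iff $W$ is, yielding (C1)$\iff$(C3).

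The main obstacle will be synchronising transversality with symplecticity: I need to argue that (C3) on its own already implies $V\pitchfork\xi$, so that Lemma~\ref{L:contact_submanifold} becomes applicable. This follows because (C3) forces $W$ to be a rank-$2n$ subbundle, and therefore the $\bar X_{f_i}$, and by the above isomorphism the $df_i|_\xi$, are pointwise linearly independent, giving surjectivity of $df|_\xi$ and hence transversality. Once this point is set straight, the remaining arguments are clean bookkeeping with symplectic complements.
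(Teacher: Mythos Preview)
Your proof is correct and follows essentially the same route as the paper's. Both arguments hinge on the identity $d'\alpha(\bar X_{f_i},v)=-df_i(v)$ for $v\in\xi$, which yields $V\cap\xi = W^{\perp_{d'\alpha}}$ (the paper proves this equality directly as a bi-implication, you prove the inclusion $W\subset(V\cap\xi)^{\perp_{d'\alpha}}$ and then dimension-count, but this is a cosmetic difference), and both reduce (C2)$\iff$(C3) to $d\alpha(X_{f_i},X_{f_j})=d'\alpha(\bar X_{f_i},\bar X_{f_j})$ via $i_{R_\alpha}d\alpha=0$. If anything, your treatment of the transversality issue---checking that (C3) alone already forces $V\pitchfork\xi$ so that Lemma~\ref{L:contact_submanifold} applies in both directions---is more explicit than the paper's, which simply refers back to the equivalence of (S1) and (S2).
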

\begin{proof} If $f:(M,\alpha)\to\R^{2n}$ is a contact submersion then the following relation holds pointwise:
\begin{equation}\ker df\cap \ker\alpha=\langle \bar{X}_{f_1},...,\bar{X}_{f_{2n}}\rangle^{\perp_{d'\alpha}},\end{equation}
where the right hand side represents the symplectic complement of the subbundle spanned by $\bar{X}_{f_1},...,\bar{X}_{f_{2n}}$ with respect to $d'\alpha$. Indeed, for any $v\in \ker\alpha$,
\[  d'\alpha(\bar{X}_{f_i},v)=-df_i(v),\ \ \text{ for all }i=1,...,2n \]
Therefore, $v\in\ker\alpha\cap\ker df$ if and only if $d'\alpha(\bar{X}_{f_i},v)=0$ for all $i=1,\dots,2n$, that is $v\in \langle \bar{X}_{f_1},...,\bar{X}_{f_{2n}}\rangle^{\perp_{d'\alpha}}$. Thus, the equivalence of (C1) and (C3) is a consequence of the equivalence between (S1) and (S2). The equivalence of (C2) and (C3) follows from the relation  $d\alpha(X,Y)=d\alpha(\bar{X},\bar{Y})$, where $X,Y$ are any two vector fields on $M$. \end{proof}

An ordered set of vectors $e_{1}(x),...,e_{2n}(x)$ in $\xi_x$ will be called a \emph{symplectic $2n$-frame} \index{symplectic $2n$-frame} in $\xi_x$ if the subspace spanned by these vectors is a symplectic subspace of $\xi_x$ with respect to the symplectic form $d'\alpha_x$. Let $T_{2n}\xi$ be the bundle of symplectic $2n$-frames in $\xi$ and $\Gamma(T_{2n}\xi)$ denote the space of sections of $T_{2n}\xi$ with the $C^{0}$ compact open topology.

For any smooth submersion $f:(M,\alpha)\rightarrow \mathbb{R}^{2n}$, define the \emph{contact gradient} of $f$ by
\[\Xi f(x)=(\bar{X}_{f_{1}}(x),...,\bar{X}_{f_{2n}}(x)),\]
where $f_{i}$, $i=1,2,\dots,2n$, are the coordinate functions of $f$. If $f$ is a contact submersion then $\bar{X}_{f_{1}}(x),...,\bar{X}_{f_{2n}}(x))$ span a symplectic subspace of $\xi_x$ for all $x\in M$, and hence $\Xi f$ becomes a section of $T_{2n}\xi$.

\begin{theorem}
\label{ED}
Let $(M^{2m+1},\alpha)$ be an open contact manifold. Then the contact gradient map $\Xi:\mathcal{C}_\alpha(M,\mathbb{R}^{2n})\rightarrow \Gamma(T_{2n}\xi)$ is a weak homotopy equivalence.
\end{theorem}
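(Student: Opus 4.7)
The plan is to factor $\Xi$ through the weak homotopy equivalence supplied by Corollary~\ref{T:contact_submersion}. Taking $N=\R^{2n}$ there gives that the derivative map $d:\mathcal{C}_\alpha(M,\R^{2n})\to \mathcal{E}_\alpha(TM,T\R^{2n})$ is already a weak homotopy equivalence, so it is enough to produce an intermediate map $\Psi:\mathcal{E}_\alpha(TM,T\R^{2n})\to \Gamma(T_{2n}\xi)$ with $\Xi=\Psi\circ d$ and to show that $\Psi$ is itself a (weak) homotopy equivalence; the result will then follow from the two-out-of-three property.

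To construct $\Psi$, I would use the canonical trivialisation $T\R^{2n}\cong \R^{2n}\times\R^{2n}$ to identify each $F\in\mathcal{E}_\alpha(TM,T\R^{2n})$ with the pair consisting of its base map $f:M\to\R^{2n}$ and the $2n$ pointwise independent 1-forms $\omega_1,\dots,\omega_{2n}$ on $M$ obtained by composition with the coordinate projections. Since $d'\alpha$ is non-degenerate on $\xi$, there is a unique $v_i(x)\in\xi_x$ determined by $d'\alpha_x(v_i(x),\cdot)=-\omega_i|_{\xi_x}$, and I set $\Psi(F)(x)=(v_1(x),\dots,v_{2n}(x))$. That $\Psi(F)$ lies in $T_{2n}\xi$ is immediate: $\ker F\cap \xi=\bigcap_i\ker(\omega_i|_\xi)$ is the $d'\alpha$-symplectic complement of $\mathrm{span}(v_i)$, so $\mathrm{span}(v_i)$ is symplectic precisely when $\ker F\cap \xi$ is, which is the assumption on $F$. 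The factorisation $\Xi=\Psi\circ d$ then reduces to the pointwise identity $d'\alpha(\bar{X}_{f_i},u)=-df_i(u)$ for $u\in\xi$, which is the contact Hamiltonian relation~(\ref{contact_hamiltonian1}) restricted to $\xi$ (using that $R_\alpha\in\ker d\alpha$).

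To see that $\Psi$ is a homotopy equivalence I would exhibit an explicit section $s$: send a symplectic frame $(v_i)\in\Gamma(T_{2n}\xi)$ to the bundle morphism over the constant base map $f\equiv 0$ whose 1-forms satisfy $\omega_i|_\xi=-d'\alpha(v_i,\cdot)$ and $\omega_i(R_\alpha)=0$. Plainly $\Psi\circ s=\mathrm{id}$, and $s\circ \Psi$ is connected to the identity by the straight-line homotopy that linearly scales $f$ and the ``Reeb components'' $\omega_i(R_\alpha)$ down to zero while keeping each $\omega_i|_\xi$ fixed. The only step that requires a moment's care, and therefore the main obstacle, is checking that this homotopy stays inside $\mathcal{E}_\alpha(TM,T\R^{2n})$: pointwise linear independence of the $\omega_i$ on $TM$ follows from that of the $\omega_i|_\xi$ (which are dual to the frame $(v_i)$, hence already independent), and the symplectic condition on $\ker F\cap \xi$ depends only on $\omega_i|_\xi$; both are therefore preserved throughout the deformation. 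Continuity of $\Psi$ in the $C^0$-compact open topologies is immediate from the smooth bundle isomorphism $\xi\to \xi^*$ induced by $d'\alpha$, completing the plan.
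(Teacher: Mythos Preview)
Your proof is correct and follows essentially the same route as the paper. The paper factors $\Xi=\tilde{\sigma}\circ c\circ d$, where $c:\mathcal{E}_\alpha(TM,T\R^{2n})\to\mathcal{E}_\alpha(TM,\R^{2n})$ forgets the base map (homotopy inverse to the inclusion over $0\in\R^{2n}$) and $\tilde{\sigma}$ is the $d'\alpha$-duality map into $\Gamma(T_{2n}\xi)$; it then checks separately that $c$ and $\tilde{\sigma}$ are homotopy equivalences. Your $\Psi$ is precisely $\tilde{\sigma}\circ c$, your section $s$ is the composite of the paper's inverses, and your straight-line homotopy simultaneously contracts the base map and the Reeb components---exactly the two homotopies the paper runs in sequence. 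The only cosmetic difference is that you do both steps at once.
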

\begin{proof} As $T\mathbb{R}^{2n}$ is a trivial vector bundle, the map
\[i_{*}:\mathcal{E}_\alpha(TM,\mathbb{R}^{2n})\rightarrow \mathcal{E}_\alpha(TM,T\mathbb{R}^{2n})\]
induced by the inclusion $i:0 \hookrightarrow \mathbb{R}^{2n}$ is a homotopy equivalence, where $\mathbb{R}^{2n}$ is regarded as the vector bundle over $0\in \mathbb{R}^{2n}$. The homotopy inverse $c$ is given by the following diagram. For any $F\in \mathcal E_\alpha(TM,T\R^{2n})$, $c(F)$ is defined by as $p_2\circ F$,
\[\begin{array}{ccccc}
TM & \stackrel{F}{\longrightarrow} & T\mathbb{R}^{2n}=\mathbb{R}^{2n}\times \mathbb{R}^{2n} & \stackrel{p_2}{\longrightarrow} & \mathbb{R}^{2n}\\
\downarrow & & \downarrow & & \downarrow\\
M & \longrightarrow & \mathbb{R}^{2n} & \longrightarrow & 0
\end{array}\]
where $p_2$ is the projection map onto the second factor.

Since $d'\alpha$ is non-degenerate, the contraction of $d'\alpha$ with a vector $X\in\ker\alpha$ defines an isomorphism
\[\phi:\ker\alpha \rightarrow (\ker\alpha)^*.\]
We define a map $\sigma:\oplus_{i=1}^{2n}T^*M\to \oplus_{i=1}^{2n}\xi$ by
\[\sigma(G_1,\dots,G_{2n})=-(\phi^{-1}(\bar{G}_1),...,\phi^{-1}(\bar{G}_{2n})),\]
where $\bar{G}_i=G_i|_{\ker\alpha}$. Then noting that
\[\ker(G_1,\dots,G_{2n})\cap \ker\alpha=\langle\phi^{-1}(\bar{G}_1),\dots,\phi^{-1}(\bar{G}_{2n})\rangle^{\perp_{d'\alpha}},\]
we get a map $\tilde{\sigma}$ by restricting $\sigma$ to $\mathcal E(TM,\R^{2n})$:
\[\tilde{\sigma}:{\mathcal E}(TM,\mathbb{R}^{2n})\longrightarrow \Gamma(M,T_{2n}\xi),\]
Moreover, the contact gradient map $\Xi$ factors as $\Xi= \tilde{\sigma} \circ c \circ d$:
\begin{equation}\mathcal{C}_\alpha(M,\mathbb{R}^{2n})\stackrel{d}\rightarrow \mathcal{E}_\alpha(TM,T\mathbb{R}^{2n})\stackrel{c}\rightarrow \mathcal{E}_\alpha(TM,\mathbb{R}^{2n})\stackrel{\tilde{\sigma}}\rightarrow \Gamma(T_{2n}\xi).\end{equation}
To see this take any $f:M\to \R^{2n}$. Then, $c(df)=(df_{1},...,df_{2n})$, and hence
\[ \tilde{\sigma} c (df)=(\phi^{-1}(df_1|_\xi),...,\phi^{-1}(df_{2n}|_\xi)) = (\bar{X}_{f_1},\dots,\bar{X}_{f_{2n}})=\Xi(f)\]
which gives $\tilde{\sigma} \circ c \circ d(f)=\Xi f$.

We claim that $\tilde{\sigma}: \mathcal{E}_\alpha(TM,\mathbb{R}^{2n})\to \Gamma(T_{2n}\xi)$ is a homotopy equivalence.
To prove this we define a map $\tau: \oplus_{i=1}^{2n}\xi \to  \oplus_{i=1}^{2n} T^*M$
by the formula \[\tau(X_1,\dots,X_{2n})=(i_{X_1}d\alpha,...,i_{X_{2n}} d\alpha)\]
which induces a map $\tilde{\tau}: \Gamma(T_{2n}\xi) \to  \mathcal{E}(TM,\mathbb{R}^{2n})$.
It is easy to verify that $\tilde{\sigma} \circ \tilde{\tau}=id$. In order to show that $\tilde{\tau}\circ\tilde{\sigma}$ is homotopic to the identity, take any $G\in  \mathcal E_\alpha(TM,\R^{2n})$ and let $\widehat{G}=(\tau\circ \sigma)(G)$. Then $\widehat{G}$ equals $G$ on $\ker\alpha$. Define a homotopy between $G$ and $\hat{G}$ by $G_t=(1-t)G+t\widehat{G}$. Then $G_t=G$ on $\ker\alpha$ and hence $\ker G_t\cap \ker\alpha=\ker G\cap \ker\alpha$. This also implies that each $G_t$ is an epimorphism. Thus, the homotopy $G_t$ lies in $\mathcal E_\alpha(TM,\R^{2n})$. This shows that $\tilde{\tau}\circ \tilde{\sigma}$ is homotopic to the identity map.

This completes the proof of the theorem since $d:\mathcal{C}(M,\mathbb{R}^{2n}) \rightarrow \mathcal{E}(TM,T\mathbb{R}^{2n})$ is a weak homotopy equivalence (Theorem~\ref{T:contact-transverse}) and $c$, $\tilde{\sigma}$ are homotopy equivalences.\end{proof}

\begin{example}
{\em Let $\mathbb{S}^{2N-1}$ denote the $2N-1$ sphere in $\R^{2N}$
\[\mathbb{S}^{2N-1}=\{(z_{1},...,z_{2N})\in \mathbb{R}^{2N}: \Sigma_{1}^{2N}|z_{i}|^{2}=1\}\]
This is a standard example of a contact manifold where the contact form $\eta$ is induced from the 1-form $\sum_{i=1}^{N} (x_i\,dy_i-y_i\,dx_i)$ on $\R^{2N}$. For $N>K$, we consider the open manifold $\mathcal S_{N,K}$ obtained from $\mathbb{S}^{2N-1}$ by deleting a $(2K-1)$-sphere:
\begin{center}$\mathcal{S}_{N,K}=\mathbb S^{2N-1}\setminus \mathbb{S}^{2K-1}$,\end{center} where
\[\mathbb{S}^{2K-1}=\{(z_{1},...,z_{2K},0,...,0)\in \mathbb{R}^{2N}: \Sigma_{1}^{2K}|z_{i}|^{2}=1\}\]
Then $\mathcal{S}_{N,K}$ is an contact submanifold of $\mathbb S^{2N-1}$. Let $\xi$ denote the contact structure associated to the contact form $\eta$ on $\mathcal S_{N,K}$. Since $\xi\to \mathcal S_{N,K}$ is a symplectic vector bundle, we can choose a complex structure $J$ on $\xi$ such that $d'\eta$ is $J$-invariant. Thus, $(\xi,J)$ becomes a complex vector bundle of rank $N-1$.

We define a homotopy $F_t:\mathcal S_{N,K}\to \mathcal S_{N,K}$, $t\in [0,1]$, as follows: For $(x,y)\in \mathbb{R}^{2k}\times \mathbb{R}^{2(N-k)}\cap \mathcal{S}_{N,K}$
\[F_t(x,y)=\frac{(1-t)(x,y)+t(0,y/\|y \|)}{\|(1-t)(x,y)+t(0,y/\| y \|) \|}\]
This is well defined since $y\neq 0$. It is easy to see that $F_0=id$, $F_1$ maps $\mathbb{S}^{2(N-K)-1}$ into $\mathcal S_{N,K}$ and  the homotopy fixes $\mathbb{S}^{2(N-K)-1}$ pointwise. Define $r:\mathcal S_{N,K}\rightarrow \{0\}\times \mathbb{R}^{2(N-k)}\cap \mathcal S_{N,K}\mathbb{S}^{2(N-K)-1}\simeq \mathbb{S}^{2(N-K)-1}$ by
\[r(x,y)= (0,y/\|y\|), \ \ \ (x,y)\in\R^{2K}\times\R^{2(N-K)}\cap \mathcal{S}_{N,K}\]
Then $F_1$ factors as $F_1=i\circ r$, where $i$ is the inclusion map, and we have the following diagram:
\[
\begin{array}{lcccl}
  r^*(i^*\xi)&\longrightarrow&i^*\xi&\longrightarrow&\xi\\
\downarrow&&\downarrow&&\downarrow\\
\mathcal{S}_{N,K}&\stackrel{r}{\longrightarrow}&\mathbb{S}^{2(N-K)-1}&\stackrel{i}{\longrightarrow}&\mathcal{S}_{N,K}\end{array}\]
Hence, $\xi=F_0^*\xi\cong F_1^*\xi=r^*(\xi|_{S^{(2N-2K)-1}})$ as complex vector bundles.
Since $\xi$ is a (complex) vector bundle of rank $N-1$, $\xi|_{\mathbb S^{2(N-K)-1}}$ will have a decomposition of the following form (\cite{husemoller}):
\[\xi|_{S^{(2N-2K)-1}}\cong \tau^{N-K-1}\oplus \theta^K,\]
where $\theta^K$ is a trivial complex vector bundle of rank $K$ and $\tau^{N-K-1}$ is a complementary subbundle. Hence $\xi$ must also have a trivial direct summand $\theta$ of rank $K$. Moreover, $\theta$ will be a symplectic subbundle of $\xi$ since the complex structure $J$ is compatible with the symplectic structure $d'\eta$ on $\xi$. Thus, $S_{N,K}$ admits a symplectic $2K$ frame spanning $\theta$. Hence, by Theorem~\ref{ED}, there exist contact submersions of $\mathcal S_{N,K}$ into $\R^{2K}$. Consequently, $\mathcal S_{N,K}$ admits contact foliations of codimension $2K$ for each $K<N$.
}\end{example}

%\bibliographystyle{plain}
%\bibliography{mybib}

\end{document}